\title{On the representation theory of partition (easy) quantum groups}
\author{Amaury Freslon}
\author{Moritz Weber}
\keywords{Quantum groups, non-crossing partitions, fusion rules}
\subjclass[2010]{20G42, 46L65}
\address{Univ. Paris VII, IMJ--PRG, B\^{a}timent Sophie Germain, Case 7012, 75205 Paris Cedex 13, France}
\address{Saarland University, Fachbereich Mathematik, Postfach 151159, 66041 Saarbr\"ucken, Germany}
\email{freslon@math.jussieu.fr, weber@math.uni-sb.de}
\date{\today}
\theoremstyle{plain}
\newtheorem{thm}{Theorem}[section]
\newtheorem{prop}[thm]{Proposition}
\newtheorem{lem}[thm]{Lemma}
\newtheorem*{conj}{Conjecture}
\newtheorem*{q}{Question}
\newtheorem*{philo}{Tannakanian philosophy}
\theoremstyle{definition}
\newtheorem{de}[thm]{Definition}
\newtheorem{ex}[thm]{Example}
\theoremstyle{remark}
\newtheorem{rem}[thm]{Remark}
\DeclareMathOperator{\Aut}{Aut}
\DeclareMathOperator{\Id}{Id}
\DeclareMathOperator{\id}{id}
\DeclareMathOperator{\Ima}{Im}
\DeclareMathOperator{\Irr}{Irr}
\DeclareMathOperator{\Hom}{Hom}
\DeclareMathOperator{\Proj}{Proj_{\CC}}
\DeclareMathOperator{\Projc}{Proj_{\CC^{\circ, \bullet}}}
\DeclareMathOperator{\ProjP}{Proj}
\DeclareMathOperator{\rl}{rl}
\DeclareMathOperator{\Span}{Span}
\DeclareMathOperator{\Sym}{Sym_{\CC}}
\DeclareMathOperator{\Symc}{Sym_{\CC^{\circ, \bullet}}}
\newcommand{\C}{\mathbb{C}}
\newcommand{\CC}{\mathcal{C}}
\newcommand{\D}{\Delta}
\newcommand{\G}{\mathbb{G}}
\newcommand{\N}{\mathbb{N}}
\newcommand{\Pbp}{P_{\textnormal{bp}}}
\newcommand{\R}{\mathbb{R}}
\newcommand{\U}{\mathcal{U}}
\newcommand{\Z}{\mathbb{Z}}
\newcommand{\ii}{\imath}
\newcommand{\singleton}{\uparrow}
\newcommand{\vierpart}{
\mathrel{\offinterlineskip
\hskip0ex\hbox{$\sqcap$}\hskip -.4ex\hbox{$\sqcap$} \hskip -0.4ex\hbox{$\sqcap$}}}
\newcommand{\idpart}{|}
\newcommand{\paarpart}{\sqcap}
\newcommand{\baarpartbaustein}{\rotatebox{180}{$\sqcap$}}
\newcommand{\baarpart}{
\mathrel{\vcenter{\offinterlineskip \hbox{$\baarpartbaustein$}}}}
\newcommand{\vierpartrot}{
\mathrel{\vcenter{\offinterlineskip
\hbox{$\baarpart$} \vskip -.1ex \hbox{\hskip .5ex $\shortmid$} \vskip -.1ex \hbox{$\paarpart$}}}}
\newcommand{\doublepairrot}{
\mathrel{\vcenter{\offinterlineskip
\hbox{$\baarpart$} \vskip +.7ex  \hbox{$\paarpart$}}}}
\newcommand{\doublesingletonrot}{
\mathrel{\vcenter{\offinterlineskip
\hbox{$\shortmid$} \vskip +.7ex \hbox{$\shortmid$}}}}
\newcommand{\crosspart}{
\mathrel{\offinterlineskip
\hbox{$/$}\hskip -.95ex\hbox{$\backslash$}}}
\newcommand{\midmid}{
\mathrel{\vcenter{\offinterlineskip
\hbox{$\shortmid$} \vskip -1.6ex \hbox{$\shortmid$}}}}
\newcommand{\halflibpart}{
\mathrel{\offinterlineskip
\hbox{$\bigtimes$}\hskip -1.55ex\hbox{$\midmid$}}}
\newcounter{PartitionDepth}
\newcounter{PartitionLength}
\newcommand{\parti}[2]{
 \begin{picture}(#2,#1)
 \setcounter{PartitionDepth}{-1-#1}
 \put(#2,\thePartitionDepth){\line(0,1){#1}}
 \end{picture}}
\newcommand{\partii}[3]{
 \begin{picture}(#3,#1)
 \setcounter{PartitionLength}{#3-#2}
 \setcounter{PartitionDepth}{-1-#1}
 \put(#2,\thePartitionDepth){\line(0,1){#1}}     
 \put(#3,\thePartitionDepth){\line(0,1){#1}}
 \put(#2,\thePartitionDepth){\line(1,0){\thePartitionLength}}
 \end{picture}}
\newcommand{\partiii}[4]{
 \begin{picture}(#4,#1)
 \setcounter{PartitionLength}{#4-#2}
 \setcounter{PartitionDepth}{-1-#1}
 \put(#2,\thePartitionDepth){\line(0,1){#1}}
 \put(#3,\thePartitionDepth){\line(0,1){#1}}
 \put(#4,\thePartitionDepth){\line(0,1){#1}}
 \put(#2,\thePartitionDepth){\line(1,0){\thePartitionLength}} 
 \end{picture}}
\newcommand{\upparti}[2]{
 \begin{picture}(#2,#1)
 \setcounter{PartitionDepth}{#1}
 \put(#2,0){\line(0,1){#1}}
 \end{picture}}
\newcommand{\uppartii}[3]{
 \begin{picture}(#3,#1)
 \setcounter{PartitionLength}{#3-#2}
 \setcounter{PartitionDepth}{#1}
 \put(#2,0){\line(0,1){#1}}     
 \put(#3,0){\line(0,1){#1}}
 \put(#2,\thePartitionDepth){\line(1,0){\thePartitionLength}}
 \end{picture}}
\newcommand{\uppartiii}[4]{
 \begin{picture}(#4,#1)
 \setcounter{PartitionLength}{#4-#2}
 \setcounter{PartitionDepth}{#1}
 \put(#2,0){\line(0,1){#1}}
 \put(#3,0){\line(0,1){#1}}
 \put(#4,0){\line(0,1){#1}}
 \put(#2,\thePartitionDepth){\line(1,0){\thePartitionLength}} 
 \end{picture}}
\renewcommand{\leq}{\leqslant}
\renewcommand{\geq}{\geqslant}
\begin{document}

\begin{abstract}
Compact matrix quantum groups are strongly determined by their intertwiner spaces, due to a result by S.L. Woronowicz. In the case of \emph{easy quantum groups} (also called \emph{partition quantum groups}), the intertwiner spaces are given by the combinatorics of partitions, see the inital work of T. Banica and R. Speicher. The philosophy is that all quantum algebraic properties of these objects should be visible in their combinatorial data.
We show that this is the case for their fusion rules (i.e. for their representation theory). As a byproduct, we obtain a unified approach to the fusion rules of the quantum permutation group $S_N^+$, the free orthogonal quantum group $O_N^+$ as well as the hyperoctahedral quantum group $H_N^+$.
We then extend our work to unitary easy quantum groups and link it with a "freeness conjecture" of T. Banica and R. Vergnioux.
\end{abstract}

\maketitle

\section{Introduction}

In 1937, R. Brauer developed in \cite{brauer1937algebras} a combinatorial tool, called \emph{Brauer diagrams}, to study the representation theory of the orthogonal group $O_{N}$. A Brauer diagram is a partition in pairs of a set of $2k$ points and the basic idea is to associate to each of these partitions an endomorphism of the vector space $(\C^{N})^{\otimes k}$. This construction produces intertwiners between tensor powers of the fundamental representation of $O_{N}$ and one of the main results of \cite{brauer1937algebras} is that any intertwiner can be recovered as a linear combination of the "combinatorial ones". This means that Brauer diagrams encode in some sense the whole representation theory of $O_{N}$. Easy quantum groups can be seen as a wide generalization of the correspondance between the compact group $O_{N}$ and the set of Brauer diagrams.

Brauer algebras (algebras generated by Brauer diagrams) can be defined over any field and have been extensively studied from the algebraic point of view. When the field is $\C$, some refinements of the construction yield  the Fuss-Catalan algebras introduced by D. Bisch and V. Jones in \cite{bisch1997algebras} to describe the combinatorics of intermediate subfactors. This is where the link with compact quantum groups began to manifest itself. This link was made clear by the founding works of T. Banica \cite{banica1996theorie}, \cite{banica1997groupe}, \cite{banica1999symmetries} and \cite{banica2002quantum}. In those papers, he used various versions of the Temperley-Lieb algebra to compute the representation theory of the free quantum groups of S. Wang and A. van Daele \cite{wang1995free}, \cite{van1996universal}. Another example is the paper \cite{banica2009fusion} by T. Banica and R. Vergnioux where the fusion rules of the quantum reflexion groups are computed using operators associated with colored partitions.

Building on this background, T. Banica and R. Speicher gave in \cite{banica2009liberation} a very general setting for the study of "partition quantum groups". The procedure is as follows: First choose a set of partitions of $k + l$ points for any pair of integers $k$ and $l$, then build vector spaces $\Hom(k, l)$ by taking the linear span of the operators associated to the partitions of $k+l$.  If now the set of partitions satifies some stability properties (if it is a \emph{category of partitions}), the Tannaka-Krein duality theorem of S.L. Woronowicz \cite{woronowicz1988tannaka} asserts the existence of a unique compact quantum group $\G$ together with a fundamental representation $u$ such that for any $k$ and $l$,
\begin{equation*}
\Hom(k, l) = \Hom(u^{\otimes k}, u^{\otimes l}).
\end{equation*}
Such quantum groups $\G$ are said to be \emph{easy quantum groups}. Note that they are also called \emph{partition quantum groups}, even though the name "easy" has now become standard in the field.

The aforementioned procedure can be recast in a purely categorical framework: The data of the sets $\Hom(k, l)$ yield a unique \emph{concrete complete monoidal W*-category}. Our aim is to understand the simple objects of this category and determine their fusion rules. Consequently, our work only uses some elementary combinatorics and linear algebra. We should emphasize that no technical knowledge on compact quantum groups is needed in the sequel.

Among easy quantum groups are  some classical groups which have been classified in \cite{banica2009liberation}: We have of course the orthogonal group $O_{N}$, furthermore the symmetric group $S_{N}$, the hyperoctahedral group $H_{N}$, the bistochastic group $B_{N}$ and some symmetrized versions of them. The other easy quantum groups can be divided into several classes, the most important one being that of \emph{free quantum groups}, which have been classified in \cite{banica2009liberation} and \cite{weber2012classification}. It consists of "liberated versions" of the classical easy groups: $O_{N}^{+}$, $S_{N}^{+}$, $H_{N}^{+}$ and $B_{N}^{+}$ and some (possibly freely) symmetrized versions. The classification of the remaining easy quantum groups has been done in  \cite{banica2010classification}, \cite{weber2012classification}, \cite{raum2013easy} and \cite{raum2013full}.

We will endeavour in this paper a comprehensive study of the representation theory of easy quantum groups. The main result is the collection of Theorems 
\ref{ThmUnitaryEquivalence}, \ref{PropGeneralDecompositionRefined}  and \ref{ThmFusionRules}. 
They describe a family of unitary \emph{"combinatorial"} representations of an arbitrary easy quantum group $\G$ with remarkable properties:
\begin{itemize}
\item Their definition is very simple and directly based on the partitions given by the "easy" structure: To every projective partition (i.e. a partition which is symmetric in an appropriate sense) we assign a representation of $\G$. Its most important datum is the number of through-blocks of the associated partition.
\item They form a decomposition of all tensor powers of the fundamental representation of $\G$. This implies that any irreducible representation appears as a summand of at least one of them.
\item There is a very simple combinatorial characterization of unitary equivalence.
\item They are "stable under tensor products": Tensor products of these representations again decompose using only combinatorial ones.
\end{itemize}
In some sense, these representations form a "combinatorial subgroup" of the discrete quantum dual of $\G$.

The question of irreducibility for these combinatorial representations is adressed but not solved explicitely. This leads directly into the core of the "group issue": Easy groups are very complicated to describe from the combinatorial point of view. For example, it is well known that representations of the symmetric group $S_{N}$ are indexed by Young diagrams, but it is quite unclear how Young diagrams can be used to decompose our combinatorial representations. This "group issue" will not be adressed here, but we will try to suggest some links between our work and classical representation theory enlightening the problem.

Nevertheless, the quantum world offers various objects with quite different behaviours. The whole strength of Theorems 
\ref{ThmUnitaryEquivalence}, \ref{PropGeneralDecompositionRefined}  and \ref{ThmFusionRules}. 
appears in the context of free easy quantum groups. Giving a unified and rather simple treatment of their representation theory is the first application of our work.

Orthogonal easy quantum groups are interesting objects which have been widely studied and are completely classified as already mentioned. The world of \emph{unitary} easy quantum groups is, on the contrary, still mysterious (see a forthcoming paper \cite{tarrago2015unitary}). However, extending our results to this context (where one has to deal with \emph{colored} partitions) is straightforward. This gives a systematic and efficient way to study the representation theory of unitary easy quantum groups and in particular of unitary \emph{free} easy quantum groups. This leads us to an interesting open problem stated by T. Banica and R. Vergnioux in \cite{banica2009fusion}, which we named the \emph{freeness conjecture}. This conjecture asserts that there is a very strong link between the structure of the category of partitions defining an easy quantum group and the algebraic structure of its fusion semiring. The way we recover the representation theory of free quantum groups in this paper not only gives evidence for this conjecture, but even enables us to make it more precise by giving a candidate for the generators of the "free" structure of the fusion semiring. We believe that this is an important step towards a proof of the freeness conjecture.

Let us now outline the organization of the paper. Section \ref{SecPartitions} deals with the combinatorial machinery behind our approach. In particular, we give a canonical way to decompose partitions and show the usefulness of it in the study of the so-called \emph{projective partitions} which will prove crucial in the sequel. After this, we give in Section \ref{SecQuantumGroups} some basic definitions and facts concerning the theory of compact quantum groups as introduced by S.L. Woronowicz. We also introduce easy quantum groups, our main object of study. In Section \ref{SecRepresentation} we turn to the main results of this paper. We build a family of representations of an arbitrary easy quantum group $\G$ out of the projective partitions of its category of partitions. We give some criteria for irreducibility and unitary equivalence and explain how to compute their tensor products -- the fusion rules with respect to partitions. The short section \ref{SecSpecial} details two extreme cases: classical groups and free quantum groups. In the first case, we try to show the difficulty of the problem by linking it with purely algebraic issues. In the second case, our method gives a unified and simple way to recover the known fusion rules for $S_N^+, O_N^+$ and $H_N^+$. The exposition there is written in such a way that a reader mainly interested in this aspect may jump directly to this point and explore the article from there. Eventually, we adress the issue of \emph{unitary quantum groups} in Section \ref{SecUnitary}. We explain how to extend our results to this setting and discuss the "freeness conjecture" of T. Banica and R. Vergnioux, trying to decide when the fusion semiring of a free unitary quantum group is free.

\section*{Acknowledgments}

This work was initiated during a stay of the first author at the University of Saarbrücken. He whishes to thank R. Speicher and his team for their kind hospitality, as well as U. Franz and \emph{Campus France (Egide)} for making this stay possible. The second author thanks University Paris VII for a stay he made there in the Operator Algebras team where this article was completed. Both authors also wish to the referee for interesting comments and suggestions on this work.

\section{Partitions and associated linear maps}\label{SecPartitions}

\subsection{Partitions and some basic notions}

We first introduce the main combinatorial tool of this paper. A \emph{partition} $p$ is a combinatorial object given by $k\in \N_{0}$ upper and $l\in \N_{0}$ lower points which may be connected by some strings. This gives rise to a partition of the ordered set on $k+l$ points, with an additional information: which points are upper and which are lower. When useful, we label the upper points by $\{1, \dots, k\}$ from left to right and likewise $\{1', \dots, l'\}$ for the lower points.
As an example, we consider the following partitions $p_{1}$ and $p_{2}$.
\setlength{\unitlength}{0.5cm}
\begin{center}
\begin{picture}(15,6)
 \put(0,2.2){$p_{1} =$}
 \put(1,5){\parti{3}{1}}
 \put(1,5){\partii{1}{1}{2}}
 \put(1,5){\parti{1}{3}}
 \put(1,1){\uppartiii{1}{2}{3}{4}}
 \put(2,4.5){1}
 \put(3,4.5){2}
 \put(4,4.5){3}
 \put(2,0){$1'$}
 \put(3,0){$2'$}
 \put(4,0){$3'$}
 \put(5,0){$4'$}
 \put(8,2.2){$p_{2} =$}
 \put(10,4.5){1}
 \put(11,4.5){2}
 \put(10,0){$1'$}
 \put(11,0){$2'$}
 \put(10.2,1){\line(1,3){1}}
 \put(10.2,4){\line(1,-3){1}}
\end{picture}
\end{center}
A set $V$ of points connected by a string in $p$ is called a \emph{block} and we write
\begin{equation*}
p = \{V_{1}, \dots, V_{r}\},
\end{equation*}
if $p$ consists of the blocks $V_{1}, \dots, V_{r}$. The number of blocks in a partition $p$ is denoted by $b(p)$. A block which consists only of a single point is a \emph{singleton}. Blocks containing upper points as well as lower points are called \emph{through-blocks} and their number is denoted by $t(p)$. For convenience, we set
\begin{equation*}
\beta(p) = b(p) - t(p),
\end{equation*}
the number of \emph{non-through-blocks}. For example, the partition $p_{1}$ in the above example consists of a through-block connecting the points $1$, $2$ and $1'$, a non-through-block on the points $2'$, $3'$ and $4'$, and finally a singleton (which is always a non-through-block) on the point 3. Thus, $b(p_{1}) = 3$, $t(p_{1}) = 1$ and $\beta(p_{1}) = 2$. 

The set of all partitions is denoted by $P(k, l)$, for $k, l\in \N_{0}$. If $k = l = 0$, then $P(0, 0)$ consists only of the \emph{empty partition}  $\emptyset$. The collection of all sets $P(k, l)$ is denoted by $P$. If all blocks of a partition $p$ consist of exactly two points, the partition is called a \emph{pair partition}. The set of all pair partitions on $k$ upper and $l$ lower points is denoted by $P_{2}(k, l)$, and likewise the collection of all pair partitions is denoted by $P_{2}$. If the connecting strings of a partition $p\in P(k, l)$ do not cross, the partition is called \emph{noncrossing}, and we denote by $NC(k, l)$ (resp. $NC_{2}(k, l)$) the set of all noncrossing partitions (resp. all noncrossing pair partitions); likewise $NC$ and $NC_{2}$. The above partition $p_{1}\in NC(3, 4)$ is non-crossing whereas $p_{2}\in P(2, 2)$ is crossing (it consists of the block connecting 1 and $2'$ and a second block on 2 and $1'$). Furthermore, $p_{2}$ is a pair partition while $p_{1}$ is not.

In the sequel, we will decompose partitions in a certain way. For this, we need two subclasses of partitions.

\begin{de}\label{DefBuildingPartition}
A partition $p\in P(k, l)$ is a \emph{building partition}, if
\begin{enumerate}
\item All lower points of $p$ are in different blocks.
\item For any lower point $1'\leqslant x'\leqslant l'$ of $p$, there exists at least one upper point which is connected to it and we define $\min_{\text{up}}(x')$ to be the smallest upper point $1\leqslant y\leqslant k$ that is connected to $x'$.
\item For any two lower points $1'\leqslant a' < b'\leqslant l'$ of $p$, we have $\min_{\text{up}}(a') < \min_{\text{up}}(b')$.
\end{enumerate}
We denote by $\Pbp(k, l)$ the set of all building partitions in $P(k, l)$.
\end{de}

\begin{rem}
If $p\in\Pbp(k, l)$ is a building partition, then its number $t(p)$ of through-blocks is equal to $l$.
\end{rem}

The following partition $p_{3}$ is an example of a building partition (note that the points $3$ and $5$ are \emph{not} connected to $4$, $6$ and $2'$; likewise $11$, $13$, $15$ and $4'$ form one block and $12$, $14$ and $5'$ another).
\setlength{\unitlength}{0.5cm}
\begin{center}
\begin{picture}(18,6)
 \put(0,2.2){$p_{3} =$}
 \put(1,5){\parti{3}{1}}
 \put(1,5){\partii{1}{1}{2}}
 \put(1,5){\partii{1}{3}{5}}
 \put(1,5){\partii{2}{4}{6}}
 \put(1,5){\parti{3}{4}}
 \put(1,5){\parti{3}{7}}
 \put(1,5){\parti{1}{8}}
 \put(1,5){\partii{1}{9}{10}}
 \put(1,5){\partiii{1}{11}{13}{15}}
 \put(1,5){\parti{3}{11}}
 \put(1,5){\partii{2}{12}{14}}
 \put(1,5){\parti{3}{12}}
 \put(2,4.5){1}
 \put(3,4.5){2}
 \put(4,4.5){3}
 \put(5,4.5){4}
 \put(6,4.5){5}
 \put(7,4.5){6}
 \put(8,4.5){7}
 \put(9,4.5){8}
 \put(10,4.5){9}
 \put(11,4.5){10}
 \put(12,4.5){11}
 \put(13,4.5){12}
 \put(14,4.5){13}
 \put(15,4.5){14}
 \put(16,4.5){15}
 \put(2,0){$1'$}
 \put(5,0){$2'$}
 \put(8,0){$3'$}
 \put(12,0){$4'$}
 \put(13,0){$5'$}
\end{picture}
\end{center}

\begin{de}
A partition $p\in P_{2}(k, k)$ is a \emph{through-partition}, if all blocks of $p$ are through-blocks. The partition $p$ hence consists of pairs that connect exactly one upper point to exactly one lower point.
\end{de}

\begin{rem}
Note the following two facts:
\begin{enumerate}
\item If $p\in P_{2}(k, k)$ is a through-partition, then $t(p) = b(p) = k$.
\item Through-partitions in $P_{2}(k, k)$ correspond in a natural way to permutations in $S_{k}$.
\end{enumerate}
\end{rem}

\subsection{Operations on partitions}

There are several operations on the set $P$ of partitions:
\begin{itemize}
\item  The \emph{tensor product} of two partitions $p\in P(k, l)$ and $q\in P(k', l')$ is the partition $p\otimes q\in P(k+k', l+l')$ obtained by \emph{horizontal concatenation}, i.e. the first $k$ of the $k+k'$ upper points are connected by $p$ to the first $l$ of the $l+l'$ lower points, whereas $q$ connects the remaining $k'$ upper points with the remaining $l'$ lower points.
\item The \emph{composition} of two partitions $p\in P(k, l)$ and $q\in P(l, m)$ is the partition $qp\in P(k, m)$ obtained by \emph{vertical concatenation}. Connect $k$ upper points by $p$ to $l$ middle points and then continue the lines by $q$ to $m$ lower points. This yields a partition, connecting $k$ upper points with $m$ lower points. By the composition procedure, certain loops might appear resulting from blocks around the middle points. More precisely, consider the set $L$ of elements in $\{1, \dots, l\}$ which are not connected to an upper point of $p$ nor to a lower point of $q$. The lower row of $p$ and the upper row of $q$ both induce partitions of the set $L$. The maximum (with respect to inclusion) of these two partitions is the \emph{loop partition} of $L$, its blocks are called \emph{loops} and their number is denoted $\rl(q, p)$. To finish the operation, we remove all the middle points (and in particular all the loops) in order to produce a partition in $P(k, m)$.
\item The \emph{involution} of a partition $p\in P(k, l)$ is the partition $p^{*}\in P(l, k)$ obtained by turning $p$ upside down.
\item We also have a \emph{rotation} on partitions. Let $p\in P(k, l)$ be a partition connecting $k$ upper points with $l$ lower points. Shifting the very left upper point to the left of the lower points (or the converse) -- without changing the strings connecting the points -- gives rise to a partition in $P(k-1, l+1)$ (or in $P(k+1, l-1)$), called a \emph{rotated version} of $p$. This procedure may also be performed on the right-hand side of the $k$ upper and $l$ lower points. In particular, for a partition $p\in P(0, l)$, we might rotate the very left point to the very right and vice-versa.
\end{itemize}

These operations (tensor product, composition, involution and rotation) are called the \emph{category operations}. By $\idpart\in P(1,1)$ we denote the \emph{identity partition}, connecting the upper point to the lower point.

\begin{de}\label{DeCatPart}
A collection $\CC$ of subsets $\CC(k, l)\subseteq P(k, l)$ (for every $k, l\in\N_{0}$) is a \emph{category of partitions} if it is invariant under the category operations and if the identity partition $\idpart\in P(1, 1)$ is in $\CC(1, 1)$.
\end{de}

Examples of categories of partitions include $P$, $P_{2}$, $NC$ and $NC_{2}$.

\begin{rem}\label{RemCatOpAndBlockNumber}
The relations between the category operations and the number of blocks (resp. through-blocks) are the following:
\begin{enumerate}
\item For  $p\in P(k, l)$ and $q\in P(k', l')$ we have the formul\ae{}
\begin{equation*}
b(p\otimes q) = b(p)+b(q)\text{ and }t(p\otimes q) = t(p)+t(q)
\end{equation*}
for the blocks (resp. the through-blocks) and likewise $\beta(p\otimes q) = \beta(p)+\beta(q)$ for the non-through-blocks (recall that $\beta(p) = b(p)-t(p)$).
\item The composition is associative and for any three partitions $p, q, r\in P$ we have the formula (as soon as it makes sense)
\begin{equation*}
\rl(p,q)+\rl(pq,r) = \rl(p,qr)+\rl(q,r)
\end{equation*}
corresponding to $(pq)r = p(qr)$. Furthermore, we have $t(pq)\leqslant \min(t(p), t(q))$. 
\item Note that in general, there is no way to compute $b(pq)$ from the numbers $b(p)$, $b(q)$, $t(p)$, $t(q)$ and $\rl(p, q)$. For instance, the four partitions 
\begin{equation*}
\begin{array}{cc}
p_{1} = \left\{\{1, 2\}, \{3, 4, 3', 4'\}, \{1', 2'\}\right\}, & q_{1} = \left\{\{1, 2\}, \{3, 4, 1', 4'\}, \{2', 3'\}\right\} \\
p_{2} = \left\{\{1, 2, 3', 4'\}, \{3, 4\}, \{1', 2'\}\right\}, & q_{2} = \left\{\{1, 2\}, \{3, 4, 3', 4'\}, \{1', 2'\}\right\}
\end{array}
\end{equation*}
satisfy $b(p_{1}q_{1})\neq b(p_{2}q_{2})$ but have all other numbers in common.
\item For any two partitions $p, q\in P$, we have $(pq)^{*} = q^{*}p^{*}$ and
\begin{equation*}
\rl(p, q) = \rl(q^{*}, p^{*}).
\end{equation*}
Furthermore $b(p^{*}) = b(p)$, $t(p^{*}) = t(p)$ and $\beta(p^{*}) = \beta(p)$.
\end{enumerate}
\end{rem}

\subsection{Decomposition of partitions}

The following notion of a projective partition will be essential in the sequel.

\begin{de}
A partition $p\in P(k, k)$ is said to be
\begin{enumerate}
\item \emph{Symmetric} if $p^{*} = p$.
\item \emph{Idempotent} if $pp = p$.
\item \emph{Projective} if $p$ is symmetric and idempotent.
\end{enumerate}
\end{de}

The following partitions $p_{4}$ and $p_{5}$ are projective partitions, whereas $p_{2}$ of the above example is not (it is not idempotent).
\setlength{\unitlength}{0.5cm}
\begin{center}
\begin{picture}(15,6)
 \put(0,2.2){$p_4 =$}
 \put(1,5){\parti{3}{1}}
 \put(1,5){\partii{1}{2}{3}}
 \put(1,1){\uppartii{1}{2}{3}}
 \put(1,5){\parti{3}{4}}
 \put(1,5){\parti{1}{5}}
 \put(1,1){\upparti{1}{5}}
 \put(10,2.2){$p_5 =$}
 \put(11,6){\partii{1}{1}{3}}
 \put(11,6){\partii{2}{2}{4}}
 \put(11,0){\uppartii{1}{1}{3}}
 \put(11,0){\uppartii{2}{2}{4}}
\end{picture}
\end{center}
Any projective partition $p$ can in particular be written as $p = r^{*}r$. The non-trivial fact which will prove crucial is that the converse also holds: \emph{Any} partition $r\in P(k, l)$ gives rise to a projective partition $r^{*}r\in P(k, k)$. This will be one of the outcomes of a special decomposition of partitions that we call the \emph{through-block decomposition}. Before proving it, we gather some computations in a lemma.

\begin{lem} \label{LemPPStar}
The following hold:
\begin{enumerate}
\item If $p\in P_{2}(k, k)$ is a through-partition, then $p^{*}p = pp^{*} = \idpart^{\otimes k}$ and $\rl(p, p^{*}) = \rl(p^{*}, p) = 0$.
\item If $p\in \Pbp(k, l)$ is a building partition, then $pp^{*} = \idpart^{\otimes l}$ and $\rl(p, p^{*}) = b(p)-t(p) = \beta(p)$. 
\item If $p\in \Pbp(k, l)$ is a building partition, then $p^{*}p$ is a projective partition with $t(p^{*}p) = l$ and $\rl(p^{*}, p) = 0$.
\end{enumerate}
\end{lem}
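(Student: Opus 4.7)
The three statements are all direct combinatorial verifications based on chasing blocks through the vertical concatenation that defines composition. The plan is to analyse, in each case, how blocks of the top and bottom partition fuse when one identifies the $l$ middle points, and then to read off (a) the block structure of the result, (b) the number of through-blocks, and (c) the number of loops. I would do the three parts in the order (1), (2), (3), since (3) will use (2) and the proof of (1) is a warm-up for the argument of (2).

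For (1), I would view the through-partition $p \in P_2(k,k)$ as the graph of a permutation $\sigma \in S_k$: the pairs are $\{i, \sigma(i)'\}$. Then $p^{*}$ has pairs $\{i, \sigma^{-1}(i)'\}$. In the composition $p p^{*}$ the middle point labelled $i$ lies in a block of $p^{*}$ (above) which contains the unique upper point $\sigma(i)$, and in a block of $p$ (below) which contains the unique lower point $\sigma(i)'$. Hence after erasing the middle row the block becomes $\{\sigma(i), \sigma(i)'\}$, and as $i$ ranges over $\{1,\dots,k\}$ these are exactly the pairs of $\idpart^{\otimes k}$. No middle block is left isolated, so $\rl(p,p^{*}) = 0$; the argument for $p^{*}p$ is identical.

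For (2), the main point is that the defining conditions of $\Pbp(k,l)$ force the blocks of $p$ to be of exactly two types: $l$ through-blocks $\{a',\, S_a\}$ (one for every lower point $a'$, with $S_a \subseteq \{1,\dots,k\}$ the non-empty set of upper neighbours) and some number $\beta(p) = b(p) - l$ of upper-only non-through-blocks. There are no purely lower non-through-blocks, because every lower point is connected upwards by condition (2) of Definition \ref{DefBuildingPartition}. When I form $p p^{*}$ (i.e.\ $p^{*}$ on top, $p$ on bottom), a middle point belonging to some $S_a$ is linked upwards to the unique upper point $a'$ of $p^{*}$ and downwards to the unique lower point $a'$ of $p$; since the $S_a$ are pairwise disjoint (distinct lower points of $p$ lie in distinct blocks), no extra identifications occur, and once the middle row is removed one is left with the pairs $\{a',\, a'\}$ of $\idpart^{\otimes l}$. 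On the other hand, each upper non-through-block of $p$ becomes, after flipping $p^{*}$ on top, a block living entirely in the middle row (it sits inside a lower non-through-block of $p^{*}$ which is the same block of middle points), so it contributes exactly one removed loop. Thus $\rl(p,p^{*}) = \beta(p)$. The main obstacle here is simply being careful with the bookkeeping: making sure the building conditions really do rule out any further merging of blocks and that the only loops come from upper non-through-blocks.

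For (3), I would compute $p^{*}p$ by the same block-chase, now with $p$ on top and $p^{*}$ on bottom. Each middle point $a'$ lies in the through-block $\{a',\, S_a\}$ of $p$ on top and in the mirror through-block of $p^{*}$ on the bottom; these merge through the middle point, so removing the middle row yields a through-block $\{S_a \text{ up},\, S_a \text{ down}\}$ for every $a \in \{1,\dots,l\}$. The upper non-through-blocks of $p$ survive unchanged at the top of the composition, and their mirrors appear at the bottom from the lower non-through-blocks of $p^{*}$; no middle point is left unconnected, so $\rl(p^{*},p) = 0$ and $t(p^{*}p) = l$. Finally, $p^{*}p$ is symmetric because $(p^{*}p)^{*} = p^{*}(p^{*})^{*} = p^{*}p$, and idempotent by combining symmetry with (2):
\begin{equation*}
(p^{*}p)(p^{*}p) = p^{*}(pp^{*})p = p^{*}\,\idpart^{\otimes l}\,p = p^{*}p,
\end{equation*}
so it is projective, which completes the lemma.
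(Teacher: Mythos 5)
Your proof is correct and follows essentially the same route as the paper: part (1) via the permutation interpretation of through-partitions, parts (2) and (3) by chasing blocks through the vertical concatenation using the defining properties of building partitions, and projectivity of $p^{*}p$ deduced formally from (2) and the anti-multiplicativity of $*$. You simply spell out the block bookkeeping in more detail than the paper does.
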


\begin{proof}
To prove $(1)$, just notice that, seen as permutations, the partition $p^{*}$ is the inverse of $p$.
 
By point $(2)$ of Definition \ref{DefBuildingPartition}, each of the upper points of $p^{*}$ is connected to a middle point in the procedure of the composition $pp^{*}$, which in turn is connected to exactly one lower point by $p$. Point $(1)$ of Definition \ref{DefBuildingPartition} thus yields $pp^{*} = \idpart^{\otimes l}$. The number $b(p)-t(p)$ counts the number of blocks in $p$ consisting only of upper points of $p$, i.e. blocks giving rise to loops which are removed in $pp^{*}$. This concludes the proof of $(2)$.
 
The proof of $(3)$ uses the same ideas. The partition $p^{*}p$ is symmetric, and $p^{*}pp^{*}p = p^{*}p$ by part $(2)$. Moreover, no loop arises in the composition $p^{*}p$ since all middle points are connected both to upper and lower points.
\end{proof}

\begin{prop}\label{PropThroughBlockDecomp} 
Let $p\in P(k,l)$ be a partition with $t(p)$ through-blocks. Then, there is a unique triple of partitions $(q, r, s)$, called the \emph{through-block decomposition} of $p$, such that
\begin{itemize}
\item $p = q^{*}rs$
\item $r\in P_2(t(p), t(p))$ is a through-partition
\item $s\in \Pbp(k, t(p))$ and $q\in \Pbp(l, t(p))$ are building partitions.
\end{itemize}
Furthermore, we have the following equalities:
\begin{itemize}
\item $\rl(r, s) = \rl(q^{*}, rs) = 0$
\item $b(p) = b(q) + b(s) - t(p)$
\item $\beta(p) = \beta(q) + \beta(s)$
\end{itemize}
\end{prop}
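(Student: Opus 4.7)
The plan is to construct the triple $(q,r,s)$ directly by dissecting $p$ along its through-blocks. Write $t := t(p)$ and label the through-blocks of $p$ as $T_{1},\dots,T_{t}$, ordered so that $\min\{i : i \text{ is an upper point of } T_{a}\}$ is strictly increasing in $a$. Let $U_{a}$ and $L_{a}$ denote the sets of upper and lower points of $T_{a}$. Define $s \in P(k,t)$ by keeping the upper non-through-blocks of $p$ as non-through-blocks of $s$, and by declaring, for each $a$, that $U_{a}\cup\{a'\}$ is a block of $s$; condition (3) of Definition \ref{DefBuildingPartition} is then built in by construction, so $s \in \Pbp(k,t)$. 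Dually, let $\sigma \in S_{t}$ be the unique permutation such that $\min L_{\sigma(1)} < \min L_{\sigma(2)} < \cdots < \min L_{\sigma(t)}$, and define $q \in P(l,t)$ by keeping the lower non-through-blocks of $p$ as the non-through-blocks of $q$ and grouping $L_{\sigma(j)}\cup\{j'\}$ as a block, so that $q \in \Pbp(l,t)$. Finally let $r \in P_{2}(t,t)$ be the through-partition corresponding to the permutation $\sigma^{-1}$.

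Next I would verify $p = q^{*}rs$ by tracking every block. Upper (resp.\ lower) non-through-blocks of $p$ appear as non-through-blocks in $s$ (resp.\ in $q^{*}$) and are untouched by the composition, hence reappear in $p$. A through-block $T_{a}$ is recovered as follows: an upper point in $U_{a}$ is sent by $s$ to the $a$-th middle point, then by $r$ to the $\sigma^{-1}(a)$-th middle point on the next level, and finally by $q^{*}$ onto $L_{\sigma(\sigma^{-1}(a))} = L_{a}$. The loop counts vanish for the reasons of Lemma \ref{LemPPStar}: in $rs$ every lower point of $s$ is connected to some upper point of $s$ (Definition \ref{DefBuildingPartition}(2)) and every upper point of $r$ is connected to a lower point of $r$, so $\rl(r,s) = 0$; applying the same argument to $q$ gives $\rl(q^{*}, rs) = 0$.

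For uniqueness, suppose $(q',r',s')$ is another admissible triple. The composition $q'^{*}r's'$ produces no connection between two distinct upper points lying in different blocks of $s'$ (again by Definition \ref{DefBuildingPartition}(1), since $s'$ sends different lower points to different through-blocks), so the upper non-through-blocks of $p$ must be exactly the upper non-through-blocks of $s'$, and each $U_{a}$ must be the upper part of a single through-block of $s'$. The minimum-ordering axiom in Definition \ref{DefBuildingPartition}(3) then forces $s' = s$; the symmetric argument forces $q' = q$, and then $r'$ is pinned down as the unique through-partition making the equation hold.

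For the equalities on block numbers, $s$ has $t$ through-blocks together with the $\beta(s)$ upper non-through-blocks of $p$, so $b(s) = t + \beta(s)$, and similarly $b(q) = t + \beta(q)$. The blocks of $p$ are the $t$ through-blocks together with all the upper and lower non-through-blocks, giving $b(p) = t + \beta(s) + \beta(q) = b(q) + b(s) - t$; subtracting $t$ yields $\beta(p) = \beta(q) + \beta(s)$. The main obstacle I anticipate is the uniqueness step: one has to argue cleanly that the building-partition minimum-ordering axiom leaves no freedom in choosing how to index the through-blocks inside $s'$ and $q'$, and that this simultaneously determines the permutation $r'$.
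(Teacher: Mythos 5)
Your proposal is correct and follows essentially the same route as the paper: construct $s$ and $q$ by restricting $p$ to its upper (resp.\ lower) points and attaching one ordered lower point per through-block, recover $r$ from the resulting permutation, prove uniqueness by showing any admissible $s'$ must reproduce the upper block structure of $p$ and hence equal $s$ by the ordering axiom, and count blocks via $b(p)=\beta(s)+\beta(q)+t(p)$. Your treatment is if anything slightly more explicit than the paper's on the permutation $\sigma$ and on the vanishing of the loop counts, and the uniqueness worry you flag at the end is already resolved by the argument you give (combined with $qq^{*}=ss^{*}=\idpart^{\otimes t(p)}$ from Lemma \ref{LemPPStar}, which pins down $r'$).
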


We will often use the notations $p = p_{l}^{*}p_{m}p_{u}$ for the elements of the through-block decomposition of $p$.

\begin{proof}
Definition \ref{DefBuildingPartition} yields a recipe for defining the partition $s\in P(k, t(p))$: we restrict the partition $p$ to its $k$ upper points and connect each of its through-blocks to exactly one of the $t(p)$ lower points. Here, we respect the order as in point $(3)$ of Definition \ref{DefBuildingPartition}. Thus, we obtain a building partition $s\in \Pbp(k, t(p))$. An analoguous procedure yields the partition $q\in \Pbp(l, t(p))$. Now, there is a unique way to connect the $t(p)$ lower points of $s$ to the $t(p)$ upper points of $q^{*}$ such that the composition yields $p$. By this, we obtain the through-partition $r\in P_{2}(t(p), t(p))$ such that $p = q^{*}rs$.
 
Let $p = (q')^{*}r's'$ be another decomposition into building partitions $s'\in \Pbp(k, m)$ and $q'\in \Pbp(l, m)$ and a through-partition $r'\in P_{2}(m, m)$. Then, $m = t((q')^{*}r's') = t(p)$. Furthermore, two upper points $a$ and $b$ of $s'$ are in the same block if and only if they are so in $p$. Indeed, if they are not connected by $s'$, they are not connected in $r's'$ since $r'$ is a through-partition. Now, all of the upper points of $(q')^{*}$ are in different blocks by definition, thus $a$ and $b$ are not connected in $(q')^{*}(r's')$. Conversely,  if two upper points $a$ and $b$ of $s'$ are in the same block, they are in the same block in $(q')^{*}r's'$ as well. We infer that the partitions $p$ and $s'$ coincide on their upper points. Furthermore, the partition $s'$ connects exactly those upper points to lower points which belong to through-blocks in $p$. By Definition \ref{DefBuildingPartition}, this can only be done in a unique way, which yields $s = s'$. Likewise, we deduce $q = q'$. Using point $(2)$ of Lemma \ref{LemPPStar}, we obtain
\begin{equation*}
r' = q'(q')^{*}r's'(s')^{*} = q'p(s')^{*} = qps^{*} = qq^{*}rss^{*} = r.
\end{equation*}
The partition $s$ consists of $t(s) = t(p)$ through-blocks and $\beta(s) = b(s)-t(p)$ upper blocks (i.e. blocks which contain only upper points). Since $p$ and $s$ coincide on their upper points, $p$ has exactly $\beta(s)$ upper blocks. In the same way, we deduce that $p$ has $\beta(q) = b(q^*)-t(p)$ lower blocks. Summing up, $p$ has 
\begin{equation*}
b(p) = \beta(s)+\beta(q)+t(p) = (b(s)-t(p))+(b(q)-t(p))+t(p) = b(s)+b(q)-t(p)
\end{equation*}
blocks. 
\end{proof}

\begin{rem}
If $\CC$ is a category of partitions and $p\in\CC$, then $q$, $r$ and $s$ \emph{need not} belong to $\CC$ in general.
\end{rem}

Let us now give some elementary properties of this through-block decomposition.

\begin{lem}\label{LemPPStarProj}
Let $p\in P(k, l)$ be a partition and let $p = q^{*}rs$ be its through-block decomposition according to Proposition \ref{PropThroughBlockDecomp}. Then,
\begin{enumerate}
\item We have $p^{*}p = s^{*}s$ and $\rl(p^{*}, p) = \rl(q, q^{*}) = \beta(q)$.
\item We have $pp^{*} = q^{*}q$ and $\rl(p, p^{*}) = \rl(s, s^{*}) = \beta(s)$.
\item The partitions $p^{*}p$ and $pp^{*}$ are projective and $t(p^{*}p) = t(pp^{*}) = t(p)$.
\end{enumerate}
Furthermore, $pp^{*}p = p$ and $\beta(p) = \rl(p^{*}, p) + \rl(p, p^{*})$.
\end{lem}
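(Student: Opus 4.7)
The plan is to substitute the through-block decomposition of $p$ into both $p^{*}p$ and $pp^{*}$ and then repeatedly absorb the resulting identity factors using Lemma \ref{LemPPStar}. Writing $p = q^{*}rs$ as in Proposition \ref{PropThroughBlockDecomp}, the involution rule $(p_{1}p_{2})^{*} = p_{2}^{*}p_{1}^{*}$ from Remark \ref{RemCatOpAndBlockNumber}(4) gives $p^{*} = s^{*}r^{*}q$, and Proposition \ref{PropThroughBlockDecomp} already guarantees that no loops arise in forming either $p$ or $p^{*}$ out of these atomic pieces. This is the only preparatory input; everything else flows from the three regime-dependent identities in Lemma \ref{LemPPStar}.

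For part (1) I would expand
\begin{equation*}
p^{*}p \;=\; s^{*}r^{*}(qq^{*})rs \;=\; s^{*}r^{*}\idpart^{\otimes t(p)}rs \;=\; s^{*}(r^{*}r)s \;=\; s^{*}\idpart^{\otimes t(p)}s \;=\; s^{*}s,
\end{equation*}
using Lemma \ref{LemPPStar}(2) for $qq^{*} = \idpart^{\otimes t(p)}$ (contributing $\beta(q)$ loops) and Lemma \ref{LemPPStar}(1) for $r^{*}r = \idpart^{\otimes t(p)}$ (contributing none); the outer composition $s^{*}s$ itself produces no loops by Lemma \ref{LemPPStar}(3). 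Summing the contributions yields $\rl(p^{*},p) = \beta(q) = \rl(q,q^{*})$. Part (2) is strictly symmetric: one applies the same reduction to $pp^{*} = q^{*}r(ss^{*})r^{*}q$, invoking Lemma \ref{LemPPStar}(2) for $s$ instead of $q$.

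Part (3) is then immediate, since $s^{*}s$ and $q^{*}q$ are projective with $t(s^{*}s) = t(s) = t(p)$ and $t(q^{*}q) = t(q) = t(p)$ by Lemma \ref{LemPPStar}(3) and the fact that $s,q$ are building partitions with $t(p)$ lower points. For the remaining claims, one computes $pp^{*}p = (q^{*}q)(q^{*}rs) = q^{*}(qq^{*})rs = q^{*}rs = p$ by a single further application of Lemma \ref{LemPPStar}(2), and the identity $\beta(p) = \rl(p^{*},p) + \rl(p,p^{*})$ is just the block-count identity $\beta(p) = \beta(q) + \beta(s)$ from Proposition \ref{PropThroughBlockDecomp} read through parts (1) and (2).

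The only bookkeeping subtlety, and what I expect to be the most delicate point, is tallying loops across a multiply-nested composition: the identity $\rl(p^{*},p) = \beta(q)$ asserts that a \emph{single} binary loop count equals the sum of the various internal loop counts produced by the nested simplification above. This is legitimate by the associativity of the loop count from Remark \ref{RemCatOpAndBlockNumber}(2), iterated so as to reduce $\rl(s^{*}r^{*}q,\,q^{*}rs)$ to a sum of binary loop counts, all but one of which vanish by Lemma \ref{LemPPStar}. Once that bookkeeping is set up, every step of the argument is mechanical.
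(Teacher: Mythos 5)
Your proof is correct and follows essentially the same route as the paper: substitute the through-block decomposition and collapse $qq^{*}$, $r^{*}r$, $ss^{*}$ via Lemma \ref{LemPPStar}. The only cosmetic differences are that the paper justifies $\rl(p^{*},p)=\beta(q)$ by directly identifying the removed loops with the lower non-through-blocks of $p$ (where you instead iterate the associativity formula for $\rl$, which also works), and it derives part (2) from uniqueness of the through-block decomposition of $p^{*}$ rather than by your symmetric computation.
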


\begin{proof}
To prove $(1)$, we compute (using Lemma \ref{LemPPStar}) 
\begin{equation*}
p^{*}p = s^{*}r^{*}qq^{*}rs = s^{*}r^{*}rs = s^{*}s.
\end{equation*}
The number $\rl(p^*,p)$ of removed loops when composing $p^*$ and $p$ is given by the number of non-through-blocks in the lower points of $p$. This number equals $\beta(q)$, the number of non-through-blocks of $q$. Since furthermore $\rl(r^{*}, r) = 0$ and $\rl(s^{*}, s) = 0$ by Lemma \ref{LemPPStar}, we get $\rl(p^{*}, p) = \rl(q, q^{*})$. 

The proof of $(2)$ follows from the fact that, by uniqueness, $s^{*}r^{*}q$ is the through-block decomposition of $p^{*}$.

Eventually, the partition $s^{*}s$ is projective and $t(s^{*}s) = t(p)$ by Lemma \ref{LemPPStar}. This and $(1)$ prove $(3)$. Moreover, $pp^{*}p = (q^{*}rs)(s^{*}r^{*}q)(q^{*}rs) = q^{*}rs = p$ by Lemma \ref{LemPPStar}.
\end{proof}

As a direct consequence, we have the announced criterion to build projective partitions.

\begin{prop}\label{PropProjectiveQstarQ}
A partition $p\in P(k, k)$ is projective if and only if there exists a partition $q\in P(k, l)$ such that $p = q^{*}q$.
\end{prop}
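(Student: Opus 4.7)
The plan is to observe that this proposition is a direct corollary of what has already been established, so the proof amounts to spelling out both implications and citing the appropriate earlier results.

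For the forward direction, suppose $p \in P(k,k)$ is projective. Then by definition $p = p^*$ and $p = pp$, hence $p = p^*p$. Taking $q = p \in P(k,k)$ (so $l = k$) gives the required factorization $p = q^*q$.

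For the backward direction, suppose there exists $q \in P(k,l)$ with $p = q^*q$. I would simply invoke part $(3)$ of Lemma \ref{LemPPStarProj}, applied to the partition $q$: that statement asserts that for \emph{any} partition $q$, the composition $q^*q$ is projective (and moreover has $t(q^*q) = t(q)$ through-blocks, though this extra information is not needed here). Therefore $p = q^*q$ is projective, completing the proof.

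There is no real obstacle — the whole content is in Lemma \ref{LemPPStarProj}, whose proof used the through-block decomposition of $q$ to reduce $q^*q$ to $s^*s$ for a building partition $s$, and then applied Lemma \ref{LemPPStar}$(3)$. Thus the present proposition is essentially a one-line corollary, serving to package the nontrivial fact (that arbitrary $q^*q$ are projective) into a characterization of projective partitions by factorizations.
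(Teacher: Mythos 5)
Your proof is correct and follows exactly the route the paper intends: the forward direction is immediate from $p=p^*$ and $p=pp$ (take $q=p$), and the backward direction is precisely part $(3)$ of Lemma \ref{LemPPStarProj}, which is why the paper presents the proposition as a direct consequence of that lemma without further argument. Nothing is missing.
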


Let us now detail the through-block decomposition in the particular case of a symmetric partition.
\begin{itemize}
\item If $p\in P(k, k)$ is symmetric, then its through-block decomposition is of the form $p = s^{*}rs$ with $r = r^{*}$. This follows from the uniqueness of the decomposition $p = q^{*}rs$ and the equality $p = p^{*} = s^{*}r^{*}q$.
\item If $p\in P(k, k)$ is symmetric, then it is projective if and only if $r = \idpart^{\otimes t(p)}$. Indeed, $pp = s^{*}rss^{*}rs = s^{*}s$, by Lemma \ref{LemPPStar}, and $p = s^{*}rs$. The result follows from the uniqueness of the through-block decomposition.
\item The only noncrossing through-partition $r\in NC_{2}(m,m)$ is $r = \idpart^{\otimes m}$, thus the through-block decomposition of any non-crossing partition $p\in NC(k, l)$ is of the form $p = q^{*}s$, where $q, s\in NC$. Hence, if $p\in NC(k, k)$ is a noncrossing partition, then $p$ is projective if and only if it is symmetric. 
\end{itemize}

\subsection{Linear maps associated to partitions}

We can relate partitions to linear maps on tensor powers of $\C^{N}$ for any integer $N\in \N_{0}$, if we fix a basis $e_{1}, \dots, e_{N}$ of $\C^{N}$. For a partition $p\in P(k, l)$ we define the linear map
\begin{equation*}
\mathring{T}_{p}:(\C^{N})^{\otimes k} \mapsto (\C^{N})^{\otimes l}
\end{equation*}
by the following formula from \cite[Def 1.6]{banica2009liberation}:
\begin{equation*}
\mathring{T}_{p}(e_{i_{1}} \otimes \dots \otimes e_{i_{k}}) = \sum_{j_{1}, \dots, j_{l} = 1}^{n} \delta_{p}(i, j)e_{j_{1}} \otimes \dots \otimes e_{j_{l}},
\end{equation*}
where $\delta_{p}(i, j) = 1$ if and only if all strings of the partition $p$ connect equal indices of the multi-index $i = (i_{1}, \dots, i_{k})$ in the upper row with equal indices of the multi-index $j = (j_{1}, \dots, j_{l})$ in the lower row. Otherwise, $\delta_{p}(i, j) = 0$.

\begin{rem}
Note that $\mathring{T}_{p} = \mathring{T}_{q}$ implies $p = q$ as soon as $N \geqslant 2$.
\end{rem}

The interplay between the category operations on partitions and the assignment $p \mapsto \mathring{T}_{p}$ was studied by T. Banica and R. Speicher in \cite[Prop. 1.9]{banica2009liberation}. It can be summarized as follows.

\begin{prop}\label{PropTpRules}
The assignment $p\mapsto \mathring{T}_{p}$ satisfies
\begin{enumerate}
\item $\mathring{T}_{p^{*}} = \mathring{T}_{p}^{*}$
\item $\mathring{T}_{p\otimes q} = \mathring{T}_{p}\otimes \mathring{T}_{q}$
\item $\mathring{T}_{pq} = N^{-\rl(p, q)}\mathring{T}_{p}\mathring{T}_{q}$.
\end{enumerate}
\end{prop}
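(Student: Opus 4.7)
The plan is to verify the three identities by unfolding the definition of $\mathring{T}_p$ in matrix coefficients with respect to the standard tensor basis $(e_{i_1}\otimes\cdots\otimes e_{i_k})$, reading off $\delta_p(i,j)$ as the indicator that the block structure of $p$ is compatible with prescribed multi-indices.

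For part (1), I compute
\[
\langle \mathring{T}_{p^*} e_j, e_i\rangle = \delta_{p^*}(j, i) \quad\text{and}\quad \langle \mathring{T}_p^* e_j, e_i\rangle = \langle e_j, \mathring{T}_p e_i\rangle = \delta_p(i, j).
\]
Since $p^*$ is obtained from $p$ by interchanging upper and lower points while preserving the block decomposition, the index-matching condition for $p^*$ with upper multi-index $j$ and lower multi-index $i$ is literally the same as that for $p$ with upper $i$ and lower $j$. Hence $\delta_{p^*}(j, i) = \delta_p(i, j)$, and the two operators agree.

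For part (2), the tensor product $p \otimes q$ is defined by horizontal concatenation, so its blocks are the disjoint union of the blocks of $p$ and of $q$ placed side by side. Splitting multi-indices as $(i, i')$ for upper and $(j, j')$ for lower, this yields the factorization $\delta_{p\otimes q}((i, i'), (j, j')) = \delta_p(i, j)\delta_q(i', j')$, which via the definition of $\mathring{T}$ gives the claim.

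For part (3), which is the crux, I fix composable partitions and compute
\[
\langle \mathring{T}_p \mathring{T}_q(e_i), e_k\rangle = \sum_{j} \delta_q(i, j)\, \delta_p(j, k),
\]
where $j$ ranges over multi-indices indexing the shared middle row of the vertical concatenation. The combinatorial heart of the argument is an analysis of the glued diagram \emph{before} middle points and loops are removed: its blocks come in two disjoint types, namely \emph{through-type} blocks which touch at least one upper or lower point of the overall composition and descend to the blocks of the composed partition, and \emph{loop blocks} supported entirely on middle points, whose number is by definition $\rl(p, q)$. For a pair $(i, k)$ with $\delta_{pq}(i, k) = 1$, the index on each through-type block of the glued diagram is forced to be constant and to agree with the prescribed values extracted from $i$ and $k$; the remaining freedom in choosing $j$ lies exactly in assigning one index to each loop block, giving $N^{\rl(p, q)}$ valid choices. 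For $(i, k)$ with $\delta_{pq}(i, k) = 0$, some through-type block forces incompatible values and the sum collapses. Thus
\[
\sum_j \delta_q(i, j)\, \delta_p(j, k) = N^{\rl(p, q)}\, \delta_{pq}(i, k),
\]
which rearranges to the announced identity. The main obstacle is precisely this bookkeeping: one must argue unambiguously that every block of the uncomposed diagram falls into exactly one of the two categories, so that the summation over $j$ splits cleanly into constrained coordinates (on through-type blocks) and $N$ free choices per loop, with no combinatorial double counting.
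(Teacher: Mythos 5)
Your argument is correct: the paper itself offers no proof of this proposition, deferring to \cite[Prop.\ 1.9]{banica2009liberation}, and your matrix-coefficient computation is exactly the standard argument given there. In particular, the key identity $\sum_j \delta_q(i,j)\delta_p(j,k) = N^{\rl(p,q)}\delta_{pq}(i,k)$ is justified properly by your split of the glued diagram's blocks into those meeting an outer point (forced indices) and pure middle-row loops ($N$ free choices each), so nothing is missing.
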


It will prove convenient in the sequel to normalize the operators $\mathring{T}_{p}$.

\begin{de}\label{DefTp}
Let $p\in P(k, l)$ be a partition, we set
\begin{equation*}
T_{p} = N^{-\frac{1}{2}\beta(p)}\mathring T_{p}.
\end{equation*}
\end{de}

Let us first give the relation between the rank of these operators and the properties of the partitions.

\begin{prop}\label{PropRankTp}
The rank of $T_{p}$ is $N^{t(p)}$, for any partition $p\in P(k, l)$.
\end{prop}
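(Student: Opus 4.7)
The plan is to compute the rank via the through-block decomposition of $p$. By Proposition \ref{PropThroughBlockDecomp}, write $p = q^{*}rs$ with $s\in\Pbp(k,t(p))$, $r\in P_{2}(t(p),t(p))$ a through-partition, $q\in\Pbp(l,t(p))$, and with the loop counts $\rl(r,s) = \rl(q^{*},rs) = 0$. Since $T_{p}$ and $\mathring T_{p}$ differ by the nonzero scalar $N^{-\beta(p)/2}$, it suffices to compute $\mathrm{rank}(\mathring T_{p})$. Applying Proposition \ref{PropTpRules}(3) twice gives
\begin{equation*}
\mathring T_{p} = \mathring T_{q^{*}}\, \mathring T_{r}\, \mathring T_{s},
\end{equation*}
a composition of linear maps $(\C^{N})^{\otimes k}\xrightarrow{\mathring T_{s}}(\C^{N})^{\otimes t(p)}\xrightarrow{\mathring T_{r}}(\C^{N})^{\otimes t(p)}\xrightarrow{\mathring T_{q^{*}}}(\C^{N})^{\otimes l}$.

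Next, I would analyse each of the three factors using Lemma \ref{LemPPStar}. For the through-partition $r$, part (1) of that lemma yields $r^{*}r = rr^{*} = \idpart^{\otimes t(p)}$ with no loops, so by Proposition \ref{PropTpRules}(1)–(3) we get $\mathring T_{r}\mathring T_{r}^{*} = \mathring T_{r}^{*}\mathring T_{r} = \mathrm{Id}_{(\C^{N})^{\otimes t(p)}}$; hence $\mathring T_{r}$ is invertible. For the building partition $s$, part (2) of the lemma gives $ss^{*} = \idpart^{\otimes t(p)}$ with $\rl(s,s^{*}) = \beta(s)$, so
\begin{equation*}
\mathring T_{s}\mathring T_{s}^{*} = N^{\beta(s)}\,\mathring T_{ss^{*}} = N^{\beta(s)}\,\mathrm{Id}_{(\C^{N})^{\otimes t(p)}},
\end{equation*}
which shows that $\mathring T_{s}$ is surjective. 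Dualising (using Proposition \ref{PropTpRules}(1)) gives that $\mathring T_{q^{*}} = \mathring T_{q}^{*}$ is injective, because $\mathring T_{q}$ enjoys the analogous surjectivity.

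Finally, combine the three: the composition of a surjection onto $(\C^{N})^{\otimes t(p)}$, an isomorphism of that space, and an injection from it, has image of dimension exactly $\dim(\C^{N})^{\otimes t(p)} = N^{t(p)}$. This yields $\mathrm{rank}(\mathring T_{p}) = N^{t(p)}$, and hence $\mathrm{rank}(T_{p}) = N^{t(p)}$. The only subtle step is the identification of the surjectivity/injectivity properties of the factors arising from building partitions; but this is precisely what parts (1)–(2) of Lemma \ref{LemPPStar} are designed for, together with the vanishing of loop counts guaranteed by the through-block decomposition.
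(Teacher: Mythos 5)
Your proof is correct, but it takes a genuinely different route from the paper. The paper argues directly on the formula for $\mathring T_{p}$: upper non-through-blocks only constrain the kernel, each lower non-through-block forces the corresponding tensor factors into a one-dimensional subspace, and each through-block collapses its tensor factors to a single copy of $\C^{N}$, so the image is visibly of dimension $N^{t(p)}$. You instead factor $\mathring T_{p} = \mathring T_{q^{*}}\mathring T_{r}\mathring T_{s}$ through $(\C^{N})^{\otimes t(p)}$ via the through-block decomposition (the vanishing loop counts $\rl(r,s)=\rl(q^{*},rs)=0$ from Proposition \ref{PropThroughBlockDecomp} are exactly what make this factorization scalar-free), and then read off surjectivity of $\mathring T_{s}$, invertibility of $\mathring T_{r}$ and injectivity of $\mathring T_{q^{*}}$ from the identities $ss^{*}=\idpart^{\otimes t(p)}$, $rr^{*}=r^{*}r=\idpart^{\otimes t(p)}$, $qq^{*}=\idpart^{\otimes t(p)}$ of Lemma \ref{LemPPStar}. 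There is no circularity: those lemmas are purely combinatorial and precede the proposition. What your approach buys is rigor and structure — it replaces the paper's slightly informal ``tensor factors reduce to a one-dimensional subspace'' inspection with a clean rank computation, and it anticipates the later statement that $T_{p}$ is a partial isometry with source and range $T_{p^{*}p}$ and $T_{pp^{*}}$. What the paper's argument buys is brevity and independence from the decomposition machinery; it would survive even for readers who skip Section 2.3.
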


\begin{proof}
Upper non-through-blocks in $p$ yield equations defining the kernel of $T_{p}$ and have consequently no influence on the rank. If $p\in P(k, l)$, the image of $T_{p}$ is a subspace of $(\C^{N})^{\otimes l}$. Each lower non-through-block implies that some tensor factors reduce to a one-dimensional subspace and each through-block collapses all the tensor factors which are in it to one copy of $\C^{N}$. Hence, the image is the tensor product of one copy of $\C$ for each lower non-through-block and one copy of $\C^{N}$ for each through-block, i.e. the rank of $T_{p}$ is $N^{t(p)}$.
\end{proof}

Before explaining the advantages of this normalization, let us gather some computations.

\begin{lem}\label{LemGamma}
Let $q\in P(k, l)$ and $p\in P(l, m)$ be two partitions and set
\begin{equation*}
\gamma(p, q) = \frac{1}{2}(\beta(p)+\beta(q)-\beta(pq))-\rl(p,q)
\end{equation*}
Then, $\gamma(p, q)$ vanishes in the two following cases.
\begin{enumerate}
\item For every partition $p$, $\gamma(p, p^{*}) = \gamma(p, p^{*}p) = 0$. In particular, if $p$ is a projective partition, then $\gamma(p, p) = 0$.
\item If $p$ and $q$ are two projective partitions such that $pq = q$ then $\gamma(q, p) = 0$.
\end{enumerate}
\end{lem}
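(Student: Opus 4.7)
The plan is to compute each $\gamma$ directly, using the through-block decomposition $p = p_l^* p_m p_u$ of Proposition \ref{PropThroughBlockDecomp} together with Lemmas \ref{LemPPStar} and \ref{LemPPStarProj} to handle the two identities of part (1), and the associativity identity for $\rl$ from Remark \ref{RemCatOpAndBlockNumber}(2) to handle part (2).

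For the first identity of (1), I record $\beta(p^*) = \beta(p) = \beta(p_l) + \beta(p_u)$ and $\rl(p, p^*) = \beta(p_u)$ from Lemma \ref{LemPPStarProj}. The remaining ingredient is $\beta(pp^*) = 2\beta(p_l)$: by Lemma \ref{LemPPStarProj}(2), $pp^* = p_l^* p_l$ is projective with through-block decomposition $p_l^* \cdot \idpart^{\otimes t(p)} \cdot p_l$, so Proposition \ref{PropThroughBlockDecomp} gives $b(pp^*) = 2 b(p_l) - t(p)$ and hence $\beta(pp^*) = 2\beta(p_l)$. Substituting yields
\[
\gamma(p, p^*) = \tfrac{1}{2}\bigl(2\beta(p_l) + 2\beta(p_u) - 2\beta(p_l)\bigr) - \beta(p_u) = 0.
\]
For the second identity the argument is parallel: $\beta(p^*p) = 2\beta(p_u)$ by the same reasoning applied to $p^*p = p_u^* p_u$, and $\beta(p \cdot p^*p) = \beta(p)$ from the identity $pp^*p = p$ of Lemma \ref{LemPPStarProj}. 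The one piece that needs a direct check is $\rl(p, p^*p) = \beta(p_u)$: stacking $p^*p = p_u^* p_u$ on top of $p = p_l^* p_m p_u$, the only loops arise at the central interface $p_u \cdot p_u^* = \idpart^{\otimes t(p)}$, contributing $\beta(p_u)$ loops by Lemma \ref{LemPPStar}(2); the remaining interfaces are loop-free because $p_m$ is a through-partition and $p_u, p_l^*$ are building partitions. Substituting once more gives $\gamma(p, p^*p) = 0$. When $p$ is projective, $p = p^* = p^*p$, so $\gamma(p, p) = 0$ is a direct consequence, and unpacking this yields the byproduct $\rl(p, p) = \beta(p)/2$ which will be reused in the next part.

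For part (2), I first note that $pq = q$ with $p, q$ projective forces $qp = (pq)^* = q^* = q$. Applying the associativity identity $\rl(p', q') + \rl(p'q', r') = \rl(p', q'r') + \rl(q', r')$ of Remark \ref{RemCatOpAndBlockNumber}(2) with $p' = q$ and $q' = r' = p$, together with $qp = q$ and $p^2 = p$, collapses to
\[
\rl(q, p) + \rl(q, p) = \rl(q, p) + \rl(p, p),
\]
so $\rl(q, p) = \rl(p, p) = \beta(p)/2$ by the projective byproduct above. Combining this with $\beta(qp) = \beta(q)$ gives $\gamma(q, p) = \tfrac{1}{2}(\beta(q) + \beta(p) - \beta(q)) - \beta(p)/2 = 0$. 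The main obstacle is the direct loop count for $\rl(p, p^*p)$ in the second identity of (1), as it is the one spot where a specific composition must be traced by hand rather than read off from the earlier results.
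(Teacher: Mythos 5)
Your proof is correct. Part (2) is essentially the paper's own argument (the paper instantiates the $\rl$-associativity identity as $\rl(p,p)+\rl(pp,q)=\rl(p,pq)+\rl(p,q)$ rather than your $\rl(q,p)+\rl(qp,p)=\rl(q,pp)+\rl(p,p)$, but with $\rl(p,q)=\rl(q^*,p^*)=\rl(q,p)$ these are the same computation). For part (1) you take a somewhat different and, to my mind, more transparent route. The paper's key step is the identity $\tfrac{1}{2}\beta(pp^{*})=\rl(p^{*},p)$, which it extracts from $\beta(p)=\rl(p^{*},p)+\rl(p,p^{*})$ via a chain of $\rl$-associativity manipulations (first showing $\rl(pp^{*},pp^{*})=\rl(p^{*},pp^{*})$ and then $\rl(p^{*},pp^{*})=\rl(p_{u},p_{u}^{*})$); it then gets the second identity $\gamma(p,p^{*}p)=0$ for free by substituting $p\mapsto p^{*}$ into the first computation. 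You instead read $\beta(pp^{*})=2\beta(p_{l})$ straight off Proposition~\ref{PropThroughBlockDecomp} applied to the through-block decomposition $pp^{*}=p_{l}^{*}\cdot\idpart^{\otimes t(p)}\cdot p_{l}$, which makes the cancellation in $\gamma(p,p^{*})$ immediate; the price is that you must then verify $\rl(p,p^{*}p)=\beta(p_{u})$ by a direct loop count, which the paper's substitution trick avoids. That count is correct — by the additivity of removed loops under iterated composition, the interfaces $\rl(p_{u}^{*},p_{u})$, $\rl(p_{m},p_{u})$, $\rl(p_{l}^{*},p_{m}p_{u})$ and $\rl(p_{u}p_{u}^{*},p_{u})=\rl(\idpart^{\otimes t(p)},p_{u})$ all vanish by Lemma~\ref{LemPPStar} and Proposition~\ref{PropThroughBlockDecomp}, leaving only $\rl(p_{u},p_{u}^{*})=\beta(p_{u})$ — though you might note explicitly that it is the associativity formula of Remark~\ref{RemCatOpAndBlockNumber}(2) that licenses summing the contributions interface by interface. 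Both arguments rest on the same machinery (through-block decomposition plus Lemmas~\ref{LemPPStar} and~\ref{LemPPStarProj}); yours trades the paper's $\rl$-bookkeeping for the $\beta$-additivity of the decomposition, and could note in passing that your three ingredients recover the paper's identity $\tfrac{1}{2}\beta(pp^{*})=\beta(p_{l})=\beta(p)-\rl(p,p^{*})=\rl(p^{*},p)$.
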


\begin{proof}
To prove (1), first note that  $\gamma(p^{*}, pp^{*}) = \frac{1}{2}\beta(pp^{*}) - \rl(p^{*}, pp^{*})$ using $p^{*}pp^{*} = p^{*}$. By Lemma \ref{LemPPStarProj}, we know that $\beta(p) = \rl(p^{*}, p) + \rl(p, p^{*})$, hence $\beta(pp^{*}) = 2\rl(pp^{*}, pp^{*})$. Using the associativity rules of Remark \ref{RemCatOpAndBlockNumber}, we infer
\begin{equation*}
\rl(pp^{*}, pp^{*}) = \rl(p, p^{*}pp^{*}) + \rl(p^{*}, pp^{*}) - \rl(p, p^{*}) = \rl(p^{*}, pp^{*})
\end{equation*}
This yields $\gamma(p^{*}, pp^{*}) = 0$. Furthermore, we have $\rl(p^{*}, pp^{*}) = \rl(p^{*}, p)$ by the following computation, where we use the through-block decomposition $p = p_{u}^{*}p_{m}p_{l}$ and again Remark \ref{RemCatOpAndBlockNumber}:
\begin{align*}
\rl(p^{*}, pp^{*}) & = \rl(p_{l}^{*}p_{m}^{*}p_{u}, p_{u}^{*}p_{u}) \\
& = \rl(p_{l}^{*}p_{m}^{*}, p_{u}p_{u}^{*}p_{u}) + \rl(p_{u}, p_{u}^{*}p_{u}) - \rl(p_{l}^{*}p_{m}^{*}, p_{u}) \\
& = \rl(p_{u}, p_{u}^{*}p_{u}) \\
& = \rl(p_{u}p_{u}^{*}, p_{u}) + \rl(p_{u}, p_{u}^{*}) - \rl(p_{u}^{*}, p_{u})
\end{align*}
Now, $\rl(p_{u}^{*}, p_{u}) = 0$ by Lemma \ref{LemPPStar} and $p_{u}p_{u}^{*} = \idpart^{\otimes t(p)}$, which yields $\rl(p_{u}p_{u}^{*}, p_{u}) = 0$. Hence, $\frac{1}{2}\beta(pp^*)=\rl(pp^{*}, pp^{*}) = \rl(p^{*}, pp^{*}) = \rl(p_{u}, p_{u}^{*}) = \rl(p^{*}, p)$ by Lemma \ref{LemPPStarProj}. Thus,
\begin{equation*}
\gamma(p, p^{*}) = \beta(p) - \frac{1}{2}\beta(pp^{*}) - \rl(p, p^{*}) = \beta(p) - \rl(p^{*}, p) - \rl(p, p^{*}) = 0.
\end{equation*}

To prove $(2)$, we use the formula of point $(2)$ of Remark \ref{RemCatOpAndBlockNumber} to get
\begin{equation*}
\rl(p, p) + \rl(p, q) = \rl(p, pq) + \rl(p, q)
\end{equation*}
Using the equality $pq = q$, we obtain $\rl(p, q) = \rl(p, p)$ and thus
\begin{equation*}
\gamma(p, q) = \frac{1}{2}\beta(p)-\rl(p, p) = 0
\end{equation*}
by Lemma \ref{LemPPStarProj}.
\end{proof}

The following proposition is a restatement of Proposition \ref{PropTpRules}. It shows the advantage of the renormalization from the point of view of operator theory.

\begin{prop}
The assignment $p\mapsto T_{p}$ satisfies:
\begin{enumerate}
\item $T_{p^{*}} = T_{p}^{*}$.
\item $T_{p\otimes q} = T_{p}\otimes T_{q}$
\item $T_{pq} = N^{\gamma(p,q)}T_{p}T_{q}$.
\item If $p\in P(k, k)$ is projective, then $T_{p}$ is a projection.
\item For any $p\in P(k, l)$, the map $T_{p}$ is a partial isometry with $T_{p}T_{p}^{*} = T_{pp^{*}}$ and $T_{p}^{*}T_{p} = T_{p^{*}p}$.
\end{enumerate}
\end{prop}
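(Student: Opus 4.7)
The plan is to reduce everything to the unnormalized version in Proposition \ref{PropTpRules}, using that the only difference between $T_p$ and $\mathring T_p$ is the scalar $N^{-\beta(p)/2}$, together with the compatibility of $\beta$ with the category operations and the key vanishing statements from Lemma \ref{LemGamma}.

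Points (1) and (2) will be almost immediate. For (1), the relation $\beta(p^{*}) = \beta(p)$ from Remark \ref{RemCatOpAndBlockNumber}(4) combined with $\mathring T_{p^{*}} = \mathring T_{p}^{*}$ gives the claim at once. For (2), the same remark supplies $\beta(p\otimes q) = \beta(p) + \beta(q)$, so the normalizations factor through the tensor product. Point (3) is the core bookkeeping identity: substituting $\mathring T_{p} = N^{\beta(p)/2}T_{p}$ into $\mathring T_{pq} = N^{-\rl(p,q)}\mathring T_{p}\mathring T_{q}$ and collecting exponents yields exactly $T_{pq} = N^{\gamma(p,q)}T_{p}T_{q}$, by the very definition of $\gamma$ in Lemma \ref{LemGamma}.

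Once (1)--(3) are in hand, (4) and (5) reduce to invoking the vanishing of $\gamma$. For (4), if $p$ is projective then $p = p^{*}$ gives $T_{p}^{*} = T_{p}$ by (1), and $pp = p$ combined with $\gamma(p,p)=0$ (which is part (1) of Lemma \ref{LemGamma} since for projective $p$ one has $p^{*}p = p$) yields via (3) that $T_{p}T_{p} = T_{pp} = T_{p}$. Hence $T_{p}$ is a self-adjoint idempotent, i.e.\ a projection. For (5), combining (1) and (3) gives
\begin{equation*}
T_{p}T_{p}^{*} = T_{p}T_{p^{*}} = N^{-\gamma(p,p^{*})}T_{pp^{*}} = T_{pp^{*}}
\end{equation*}
since $\gamma(p,p^{*}) = 0$ by Lemma \ref{LemGamma}(1); symmetrically, applying Lemma \ref{LemGamma}(1) to $p^{*}$ in place of $p$ gives $\gamma(p^{*},p) = 0$ and hence $T_{p}^{*}T_{p} = T_{p^{*}p}$. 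Since $pp^{*}$ and $p^{*}p$ are projective with $t(pp^{*}) = t(p^{*}p) = t(p)$ by Lemma \ref{LemPPStarProj}(3), part (4) shows that $T_{pp^{*}}$ and $T_{p^{*}p}$ are projections, which characterizes $T_{p}$ as a partial isometry.

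There is no real obstacle here: this proposition is a polishing step exhibiting the advantage of the renormalization. The only place where the vanishing statements of Lemma \ref{LemGamma} are genuinely needed is in passing from $\mathring T$ to $T$ for compositions of the specific form $pp^{*}$, $p^{*}p$, and $pp$, all of which are covered by case (1) of that lemma.
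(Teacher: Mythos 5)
Your proposal is correct and follows essentially the same route as the paper: the paper likewise derives (1) and (2) from $\beta(p^{*})=\beta(p)$ and $\beta(p\otimes q)=\beta(p)+\beta(q)$, gets (3) directly from the definition of $T_{p}$ and Proposition \ref{PropTpRules}, and deduces (4) and (5) from Lemma \ref{LemGamma} combined with (1) and (3). Your write-up merely spells out the exponent bookkeeping and the vanishing of $\gamma$ in the relevant cases, which the paper leaves implicit.
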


\begin{proof}
$(1)$ and $(2)$ come from the equalities $\beta(p^{*}) = \beta(p)$ and $\beta(p\otimes q) = \beta(p)+\beta(q)$ (see Remark \ref{RemCatOpAndBlockNumber}) and $(3)$ follows directly from the definition of $T_{p}$ and Proposition \ref{PropTpRules}. Lemma \ref{LemGamma} together with $(1)$ and $(3)$, yield $(4)$ and $(5)$.
\end{proof}

\begin{rem}
Let us make two points:
\begin{itemize}
\item This normalization of the maps $\mathring{T}_{p}$ is the best possible with respect to points $(1)$, $(4)$ and $(5)$ of the last proposition. In fact, if $T'_{p} = N^{-\alpha(p)}\mathring{T}_{p}$ is any normalization of $T_{p}$ (with $\alpha(p)\in \R$) such that the analogues of $(1)$, $(4)$ and $(5)$ hold, let us prove that $\alpha(p) = \frac{1}{2}\beta(p)$. We have:
\begin{equation*}
T'_{p}T'_{q} = N^{-(\alpha(p)+\alpha(q))+\rl(p, q)+\alpha(pq)}T'_{pq}
\end{equation*}
Thus, if $(4)$ holds then $\alpha(p) = \rl(p, p)$ for any projective partition $p$. Furthermore, if $(5)$ holds, then
\begin{equation*}
\alpha(p)+\alpha(p^{*}) = \rl(p, p^{*})+\alpha(pp^{*})
\end{equation*}
for any partition $p\in P(k, l)$. Since $pp^{*}$ is projective, we deduce that $\alpha(pp^{*}) = \rl(pp^{*}, pp^{*}) = \rl(p^{*}, p)$. Using $(1)$ (as well as Lemma \ref{LemPPStarProj}), we obtain
\begin{equation*}
\alpha(p) = \frac{1}{2}(\rl(p, p^{*})+\rl(p^{*}, p))=\frac{1}{2}\beta(p).
\end{equation*}
\item Although we may have $\gamma(p, q) = 0$ in some cases (e.g. in Lemma \ref{LemGamma}), this correction term from the composition of maps $T_{p}$ and $T_{q}$ does not always vanish (see for instance the partitions $p_{1}$ and $q_{1}$ from Remark \ref{RemCatOpAndBlockNumber}).
\end{itemize}
\end{rem}

\subsection{The lattice of projective partitions}

By $\Proj(k)$, we denote the set of all projective partitions $p\in \CC(k, k)$, where $k\in\N$ and $\CC$ is a category of partitions. If $\CC = P$, we simply write $\ProjP(k)$. There is a natural order structure on this set, pulled back from the order structure on the associated projections $T_{p}$.

\begin{de}
Let $k\in\N$ and let $p, q\in \ProjP(k)$ be two projective partitions. We say that $q$ \emph{is dominated by} $p$ (or that $p$ \emph{dominates} $q$) and we write $q\preceq p$, if $pq = q$ -- or equivalenty $qp = q$.
\end{de}

\begin{lem}\label{LemProjAndTP}
Let $p, q\in \ProjP(k)$. Then $q\preceq p$ if and only if $T_{q}\leq T_{p}$ as projections (i.e. $T_{p}T_{q} = T_{q}T_{p} = T_{q}$) for some $N\geqslant 2$.
\end{lem}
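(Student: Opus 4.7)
The plan is to use two facts already established in the excerpt: (a) the normalization identity $T_{pq} = N^{\gamma(p,q)} T_p T_q$ from the proposition just before, combined with Lemma \ref{LemGamma}(2), and (b) the injectivity remark after Definition of $\mathring T_p$, which says that $p\mapsto \mathring T_p$ is injective for $N\geq 2$.

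For the forward direction, assume $q\preceq p$, i.e.\ $pq = q$. Taking the involution and using that $p, q$ are projective (hence self-adjoint in the partition sense), one gets $qp = (pq)^* = q^* = q$, so both $pq = q$ and $qp = q$ hold. Apply Lemma \ref{LemGamma}(2) to the pair $(p,q)$ with $pq = q$: this yields $\gamma(q,p) = 0$, hence
\begin{equation*}
T_q = T_{qp} = N^{\gamma(q,p)} T_q T_p = T_q T_p.
\end{equation*}
Applying the same lemma to the pair $(q,p)$ with $qp = q$ gives $\gamma(p,q) = 0$, and hence
\begin{equation*}
T_q = T_{pq} = N^{\gamma(p,q)} T_p T_q = T_p T_q.
\end{equation*}
Since $T_p$ and $T_q$ are self-adjoint projections (by Proposition on $T_p$ rules, part (4)), this is exactly the statement $T_q \leq T_p$.

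For the converse, assume $T_p T_q = T_q T_p = T_q$ for some $N\geq 2$. By the normalization identity,
\begin{equation*}
T_q = T_p T_q = N^{-\gamma(p,q)} T_{pq},
\end{equation*}
so in terms of the unnormalized maps $\mathring T_p = N^{\beta(p)/2} T_p$ we get $\mathring T_{pq} = c\,\mathring T_q$ for a suitable scalar $c = N^{\gamma(p,q) + \beta(pq)/2 - \beta(q)/2}$. Here is the one step that needs a small observation: both $\mathring T_{pq}$ and $\mathring T_q$ are nonzero matrices whose entries lie in $\{0,1\}$, so the only scalar linking them is $c=1$, giving $\mathring T_{pq} = \mathring T_q$. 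By the injectivity of $p\mapsto \mathring T_p$ for $N\geq 2$, we conclude $pq = q$, i.e.\ $q\preceq p$.

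The main (minor) obstacle is the backward direction, since the category operation identity does not tautologically match the operator identity: the normalization constant $N^{\gamma(p,q)}$ must be shown to be $1$. Both directions of the equivalence $pq = q \Leftrightarrow qp = q$ for projective partitions also need the small remark that projective partitions are self-adjoint. Everything else is a direct assembly of the propositions and lemmata already in the text.
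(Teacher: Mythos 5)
Your proof is correct, and the forward direction is exactly the paper's: both deduce $\gamma(p,q)=\gamma(q,p)=0$ from Lemma \ref{LemGamma} and conclude $T_pT_q=T_{pq}=T_q$ (your explicit check that $pq=q$ forces $qp=q$ via $(pq)^*=q^*p^*$ and self-adjointness is a detail the paper absorbs into the definition of $\preceq$). The converse, however, is handled differently. The paper first uses the symmetry $\gamma(s,t)=\gamma(t^*,s^*)$ to get $T_{pq}=T_{qp}$, hence $pq=qp$, deduces that $pq$ is projective, and then exploits idempotency of the projection $T_{pq}$ in the chain $N^{-\gamma(p,q)}T_{pq}=T_q=T_q^2=N^{-2\gamma(p,q)}T_{pq}$ to force $\gamma(p,q)=0$. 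You instead pin down the normalization scalar directly: writing $T_q=T_pT_q=N^{-\gamma(p,q)}T_{pq}$ as $\mathring{T}_{pq}=c\,\mathring{T}_q$ and observing that both unnormalized matrices are nonzero with entries in $\{0,1\}$ forces $c=1$, whence $pq=q$ by injectivity of $r\mapsto\mathring{T}_r$ for $N\geq 2$. Your route is more elementary -- it needs neither the commutation $pq=qp$ nor the projectivity of $pq$, and it only uses the single equality $T_pT_q=T_q$ -- and it has the side benefit of making explicit the entry-comparison step that the paper itself silently relies on when it passes from $T_{pq}=T_q$ (an identity of \emph{normalized} operators) to $pq=q$. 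The paper's version, in exchange, stays entirely inside the calculus of the exponents $\gamma$ and $\rl$ and so generalizes verbatim to settings where one does not want to inspect matrix entries.
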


\begin{proof}
By Lemma \ref{LemGamma}, we know that $q\preceq p$ implies that $\gamma(p, q) = 0$. Hence,
\begin{equation*}
T_{p}T_{q} = T_{pq} = T_{q}
\end{equation*}
in that case. Conversely, assume $T_{q}\leq T_{p}$. Then,
\begin{equation*}
N^{-\gamma(p, q)}T_{pq} = T_{p}T_{q} = T_{q} = T_{q}T_{p} = N^{-\gamma(q, p)}T_{qp}.
\end{equation*}
Since, by Remark \ref{RemCatOpAndBlockNumber}, $\gamma(s, t) = \gamma(t^{*}, s^{*})$ for \emph{any} two partitions $s$ and $t$, we infer that $T_{pq} = T_{qp}$ and hence $pq = qp$. Thus, $pq$ is a projective partition and we get
\begin{equation*}
N^{-\gamma(p, q)}T_{pq} = T_{q} = T_{q}^{2} = N^{-2\gamma(p, q)}T_{pq}T_{pq} = N^{-2\gamma(p, q)}T_{pq}
\end{equation*}
This yields $\gamma(p, q) = 0$, hence $T_{q} = T_{pq}$ and $pq = q$.
\end{proof}

\begin{rem}
The partial order $\preceq$ on $\ProjP(k)\subset P(k, k)$ is not to be confused with the usual partial order $\leq$ on $P(k, k)$. Recall that for two partitions $p, q\in P(k, l)$, we write $q\leq p$ if each block of $q$ is contained in a block of $p$. Thus, in the partial order $\leq$, a  partition is smaller if it is given by refinement of the block structure. For instance, the partitions
\begin{equation*}
p = \idpart\;\idpart\text{ and }q = \vierpartrot
\end{equation*}
in $P(2, 2)$ satisfy $p\leq q$. For $k\in\N$, the minimum with respect to $\leq$ is the partition in $P(k, k)$ where all blocks are singletons (no points are connected), whereas the maximum is the partition consisting of a single block (all points are connected to each other).

The partial order $\preceq$ does \emph{not} coincide with $\leq$. In fact, it even behaves quite differently. For instance, for the above partitions $p, q\in P(2, 2)$, we have $q\preceq p$ but $q\geq p$. As another, example consider the partitions
\begin{equation*}
r = \doublepairrot\text{ and } s = \doublesingletonrot\;\doublesingletonrot
\end{equation*}
in $P(2, 2)$. Then, $r\geq s$, but $r$ and $s$ are not comparable with respect to $\preceq$. For $k\in\N$, the maximum with respect to $\preceq$ is the partition $\idpart^{\otimes k}$, but there is no minimum.
\end{rem}

We can describe the order $\preceq$ in the following way: A projective partition $q$ is smaller than a projective partition $p$ if it is given by reducing the through-block structure -- either by cutting or by combining through-block strings. Let us make this more precise.

\begin{lem}
Let $p, q\in\ProjP(k)$ be two projective partitions. Then $q\preceq p$ if and only if $q$ is obtained from $p$ by
\begin{itemize}
\item keeping the non-through-block structure, i.e. every non-through-block of $p$ is a non-through-block of $q$ of exactly the same form,
\item leaving through-blocks of $p$ either invariant,
\item or unifying some of them to larger through-blocks,
\item and/or turning them into non-through-blocks.
\end{itemize}
\end{lem}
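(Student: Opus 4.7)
My plan is to prove both directions by a direct block-by-block analysis of the composition $pq$, exploiting the fact that both $p$ and $q$ are projective (hence symmetric).

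For the $(\Leftarrow)$ direction, I would assume $q$ is produced from $p$ by the allowed operations and track each block of $p$ through the composition. An upper-only block $N$ of $p$ touches no middle point and so passes unchanged to an upper-only block of $pq$; its lower-only mirror meets, on the middle row, the identical upper-only block of $q$ (by the preservation hypothesis), producing a block of purely middle points, i.e.\ a loop, while the lower-only mirror block of $q$ reappears verbatim on the lower row. For each group $\{T_i\}_{i\in I}$ of through-blocks of $p$ modified in $q$, the middle merging fuses $q$'s corresponding block $\bigcup_{i\in I}T_i^{\text{up}}$ with all the $T_i^{\text{low}}$. In the unification case this further connects to $q$'s lower part $\bigcup T_i^{\text{low}}$, yielding one through-block of $pq$; in the unification-plus-conversion case nothing attaches below, so we obtain an upper-only block $\bigcup T_i^{\text{up}}$ and, separately, the mirror lower-only block of $q$ on the bottom. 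Each case reproduces the corresponding block of $q$, whence $pq = q$.

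For $(\Rightarrow)$, I would start from $pq = q$ and recover the structure of $q$. An upper-only block of $p$ is never merged (no middle contact), so it survives as an upper-only block of $pq = q$; applying the same reasoning to $qp = (pq)^{*} = q$ shows that lower-only blocks of $p$ are lower-only blocks of $q$, which is precisely the preservation of non-through-block structure. For the remaining upper positions $\bigcup T_i^{\text{up}}$, each $T_i^{\text{up}}$ is already one $p$-block on the upper row, linked through its middle mirror, hence lies entirely inside one block of $pq$. This forbids splitting of the family $\{T_i^{\text{up}}\}_i$, so the upper-row partition of $q$ on $\bigcup T_i^{\text{up}}$ is obtained by taking unions. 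For each such union, the symmetry of $q$ makes it either the upper part of a through-block (unification) or an upper-only block with mirror lower-only block (unification plus conversion): exactly the two allowed modifications.

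The step demanding the most care is the bookkeeping of the middle-row merging. The middle partition inherited from $p$ has its lower-only and through-block parts on disjoint position sets (it is a mirror of $p$'s upper row), and similarly for $q$, so the join of the two middle partitions decomposes transparently into loop-producing pieces (from non-through-blocks of $p$) and group-producing pieces (from groups of through-blocks), with no cross-interaction. A secondary delicate point in $(\Rightarrow)$ is that the through versus non-through dichotomy for the coarsened blocks of $q$ cannot be detected from the upper row alone: one must also inspect whether $q$'s block extends to the lower row, but this is a routine check once the upper-row coarsening is in hand.
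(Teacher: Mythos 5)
Your overall strategy is sound and genuinely different from the paper's. The paper reduces everything to the through-block decomposition: it writes $p=r^{*}r$ and $q=s^{*}s$ with $r,s$ building partitions, observes that $qp=q$ is equivalent to $tr=s$ for $t=sr^{*}$, and then classifies how a partition $t$ can act on a building partition by composition. You instead analyse the three-row picture of the composite directly, block by block. Your $(\Leftarrow)$ direction and the first steps of $(\Rightarrow)$ (survival of upper-only blocks of $p$, the adjoint trick for lower-only blocks, and the coarsening of the family $\{T_i^{\mathrm{up}}\}$) are correct as written, and the middle-row bookkeeping you flag as the delicate point is indeed handled correctly by the disjointness of the through and non-through position sets.

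There is, however, one genuine gap in $(\Rightarrow)$, at the step ``the symmetry of $q$ makes it either the upper part of a through-block (unification) or an upper-only block with mirror lower-only block.'' Symmetry does give the mirror partner in the upper-only case, but in the through-block case it does \emph{not} force the lower part of the block to be the mirror of its upper part: a priori a block of $q$ could have upper part $\bigcup_{i\in I}T_i^{\mathrm{up}}$ and lower part $\bigcup_{j\in J}T_j^{\mathrm{low}}$ with $J\neq I$, its mirror being a \emph{different} block of $q$; such a $q$ is not obtained from $p$ by the allowed operations. Concretely, take $p=\idpart^{\otimes 2}$ and $q=\crosspart$: then $q$ is symmetric and $qp=q$, yet $q$ permutes the two through-blocks of $p$ rather than unifying or cutting them. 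What rules this out is not symmetry but \emph{idempotency} of $q$: composing $q$ with itself merges such a block $B$ with its mirror $B^{*}$ through the middle row and produces a block with upper and lower parts both equal to $\bigcup_{i\in I}$, so $qq=q$ forces $J=I$. Equivalently, one may quote the through-block decomposition $q=q_u^{*}q_u$ of a projective partition (Proposition \ref{PropThroughBlockDecomp}), which shows directly that every through-block of a projective partition has lower part equal to the primed copy of its upper part -- this is exactly what the paper's route gets for free by working with $q=s^{*}s$ from the start. Once you insert this observation, your proof closes.
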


\begin{proof}
We first study how a partition $t\in P(l, m)$ can operate (by composition) on a building partition $r\in \Pbp(k, l)$ such that the resulting partition $tr$ is a building partition again. First note that $t$ cannot change the non-through-block structure of $r$. Now, let $x$ be an upper point of $t$ (which is hence at the same time a lower point of $r$) and denote by $V_{x}$ the through-block of $r$ connected to $x$.

\setlength{\unitlength}{0.5cm}
\begin{figure}[h]
\begin{center}
\begin{picture}(5,4)
 \put(0,0){\line(0,1){4}}
 \put(0,0){\line(1,0){3}}
 \put(0,4){\line(1,0){4}}
 \put(0,2){\line(1,0){2}}
 \put(2,2){\line(1,-2){1}}
 \put(2,2){\line(1,1){2}}
 \put(0.3,1.8){$\bullet$}
 \put(0.3,1.2){$x$}
 \put(4,1){$t$}
 \put(4,3){$r$}
 \put(0.5,2.25){\line(0,1){1.5}}
 \put(0.5,2.25){\line(2,3){1}}
 \put(0.5,3.75){\line(1,0){1}}
 \put(1.3,2.8){$V_{x}$}
\end{picture}
\end{center}
\end{figure}

\emph{Case 1.} If $x$ is a singleton in $t$ (i.e. $x$ is not connected to any other point of $t$), then $t$ turns the through-block $V_{x}$ of $r$ into a non-through-block.

\emph{Case 2.} Now let $x$ not be a singleton in $t$. First, note that it can be connected to at most one lower point of $t$ -- otherwise $tr$ would not be a building partition. Hence, if $x$ is not connected to any other upper point of $t$, it is connected to exactly one lower point, which effects that $t$ leaves the through-block $V_{x}$ of $r$ invariant. On the other hand, if $x$ is connected to some other upper points of $t$, the corresponding through-blocks of $r$ are connected to a single block by $t$. Depending on whether $x$ is also connected to a lower point of $t$ or not, the resulting block either is a through-block or not. 

We conclude that $t$ operates on $r$ by composition in the following way:
\begin{itemize}
\item Every non-through-block of $r$ remains of the same form (in $tr$).
\item Through-blocks of $r$ are either left invariant,
\item or are unified with other through-blocks to larger through-blocks,
\item and/or they are turned into non-through-blocks.
\end{itemize}
Conversely, if a building partition $s$ results from another building partition $r$ by the above operations, this can be modelled by a partition $t$ and the composition $tr = s$.

Now, write $q = s^{*}s$ and $p = r^{*}r$ in the through-block decomposition (by Proposition \ref{PropThroughBlockDecomp}), where $s$ and $r$ are building partitions. Then, $qp = s^{*}sr^{*}r = s^{*}tr$ where $t = sr^{*}$ is a partition in $P(t(p), t(q))$. Hence, $qp = q$ if and only if $tr = s$ (note that $qp = q$ implies $sqp = sq$ and we have $ss^{*} = \idpart^{\otimes t(q)}$ by Lemma \ref{LemPPStar}). Thus, if $qp = q$, then $s$ results from $r$ by the above operations. On the other hand, if $s$ results from $r$ by the above operations, then this can be modelled by a partition $t'$ such that $t'r = s$. Since $rr^{*} = \idpart^{\otimes t(p)}$, we have $t' = sr^{*} = t$ and hence $qp = q$.

The proof is finished by translating the above operations on $r$ to $p$.
\end{proof}

Later on, we will need to know precisely which projective partitions are dominated by a tensor product $p\otimes q$ of two projective partitions. In that context, the previous result can be restated in a more elegant way. Let us first introduce some notations.

\begin{de}
 Let $\CC$ be a category of partitions and let $p\in\Proj(a)$, $q\in\Proj(b)$ be two projective partitions. By $X_{\mathcal C}(p,q)$ we denote the set of all partitions $m\in\Proj(a+b)$ such that:
\begin{itemize}
 \item $m\preceq p\otimes q$
 \item $m\not\preceq l\otimes q$ for all $l\preceq p$ with $l\neq p$ and $l\in\CC$
 \item $m\not\preceq p\otimes r$ for all $r\preceq q$ with $r\neq q$ and $r\in\CC$
\end{itemize}
\end{de}

Recall that we label the upper points of a projective partition $p\in \ProjP(k)$ by numbers $1\leqslant a\leqslant k$ from left to right, whereas the lower points are labelled by $1'\leqslant a'\leqslant k'$.

\begin{de}\label{DeMixPart}
Let $k, l\in \N_{0}$. A \emph{$(k, l)$-mixing partition} is a projective partition $h\in \ProjP(k+l)$ such that:
\begin{itemize}
\item all blocks of $h$ have size $2$ or $4$.
\item blocks of size $2$ are either of the form $(a, a')$ or $(a, b)$ with $a\leqslant k$ and $b > k$, or likewise $(a',b')$ with $a'\leqslant k'$ and $b' > k'$.
\item blocks of size $4$ are of the form $(a, a', b, b')$ with $a\leqslant k$ and $b > k$.
\end{itemize}
\end{de}

\setlength{\unitlength}{0.5cm}
\newsavebox{\framex}
\savebox{\framex}
{\begin{picture}(6,2)
\put(0,0){\line(1,0){6}}
\put(0,0){\line(0,1){1.5}}
\put(0,1.5){\line(1,0){6}}
\put(6,0){\line(0,1){1.5}}
\end{picture}}
\newsavebox{\framey}
\savebox{\framey}
{\begin{picture}(5,2)
\put(0,0){\line(1,0){5}}
\put(0,0){\line(0,1){1.5}}
\put(0,1.5){\line(1,0){5}}
\put(5,0){\line(0,1){1.5}}
\end{picture}}

\begin{center}
\begin{picture}(28,12)
 \put(4,1){blocks of size 2}
 \put(0,3){\usebox{\framex}}
 \put(1,3.5){$c'$}
 \put(4,3.5){$a'$}
 \put(2.5,2){$k'$}
 \put(7,3){\usebox{\framey}}
 \put(8,3.5){$b'$}
 \put(10,3.5){$d'$}
 \put(9,2){$l'$}
 \put(0,5){\upparti{3}{1}}
 \put(0,5){\uppartii{1}{4}{8}}
 \put(0,9){\partii{1}{4}{8}}
 \put(0,5){\upparti{3}{10}}
 \put(0,8.5){\usebox{\framex}}
 \put(1,9){$c$}
 \put(4,9){$a$}
 \put(2.5,10.5){$k$}
 \put(7,8.5){\usebox{\framey}}
 \put(8,9){$b$}
 \put(10,9){$d$}
 \put(9,10.5){$l$}
 \put(20,1){blocks of size 4}
 \put(16,3){\usebox{\framex}}
 \put(20,3.5){$a'$}
 \put(18.5,2){$k'$}
 \put(23,3){\usebox{\framey}}
 \put(24,3.5){$b'$}
 \put(25,2){$l'$}
 \put(16,6){\upparti{1}{6}}
 \put(16,5){\uppartii{1}{4}{8}}
 \put(16,9){\partii{1}{4}{8}}
 \put(16,8.5){\usebox{\framex}}
 \put(20,9){$a$}
 \put(18.5,10.5){$k$}
 \put(23,8.5){\usebox{\framey}}
 \put(24,9){$b$}
 \put(25,10.5){$l$}
\end{picture}
\end{center}

\begin{de}\label{DeMixPartAndPQ}
If $p$ and $q$ are projective partitions and if $h$ is a $(t(p), t(q))$-mixing partition, we set
\begin{equation*}
p\ast_{h}q = (p_{u}^{*}\otimes q_{u}^{*})h(p_{u}\otimes q_{u})
\end{equation*}
\end{de}

Intuitively, the operation $\ast_{h}$ mixes the through-block structures of $p$ and $q$, hence the name. Before describing the projective partitions dominated by a tensor product, let us give some elementary properties of the mixing operation.

\begin{lem}\label{LemTensorSubproj}
Let $l, l'\in \ProjP(a)$ and $r, r'\in\ProjP(b)$ be projective partitions. Then, for every $(t(l), t(r))$-mixing partition $h$ and every $(t(l'),t(r'))$-mixing partition $h'$, the following holds:
\begin{enumerate}
\item Let $\CC$ be a category of partitions. If $l \ast_{h} r\in\CC$, then $l, r\in\CC$.
\item If $l\ast_{h}r = l'\ast_{h'}r'$, then $l = l'$, $r = r'$ and $h = h'$.
\item $l \ast_{h} r\preceq l\otimes r$.
\end{enumerate} 
\end{lem}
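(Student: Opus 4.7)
For part (3), the plan is to compute $(l\otimes r)(l\ast_h r)$ directly. Expanding via Definition~\ref{DeMixPartAndPQ}, distributing composition over tensor product, and using $l_u l_u^{*} = \idpart^{\otimes t(l)}$ and $r_u r_u^{*} = \idpart^{\otimes t(r)}$ from Lemma~\ref{LemPPStar}(2), the middle collapses and the expression reduces to $(l_u^{*}\otimes r_u^{*})h(l_u\otimes r_u) = l\ast_h r$. This gives $l\ast_h r\preceq l\otimes r$ by the definition of the order.

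For part (2), the plan is to read off $l$, $r$, and $h$ from the block structure of $l\ast_h r$. Since the composition $(l_u^{*}\otimes r_u^{*}) h$ acts only on the lower side of $l_u\otimes r_u$, the non-through-block structure on the upper $a+b$ points of $l\ast_h r$ splits according to its restriction to the left $a$ versus right $b$ columns: blocks contained entirely on the left are the upper non-through-blocks of $l$, those entirely on the right are the upper non-through-blocks of $r$, and blocks crossing the boundary come from the non-identity blocks of $h$. An analogous analysis on the lower row recovers the lower non-through-block data. Combined with uniqueness of the through-block decomposition (Proposition~\ref{PropThroughBlockDecomp}), this recovers $l_u$ and $r_u$, hence $l$ and $r$; finally $h$ is the unique mixing pattern compatible with $l\ast_h r = (l_u^{*}\otimes r_u^{*})h(l_u\otimes r_u)$.

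Part (1) is the main obstacle; the plan is to exhibit $l$ and $r$ explicitly from $l\ast_h r$ using only the category operations. Since $\paarpart$ and $\baarpart$ arise by rotating $\idpart$, they lie in every category $\mathcal{C}$. The strategy is first to rotate the right $b$ upper points of $l\ast_h r$ down to the right end of the lower row, yielding $\tilde{p}\in\mathcal{C}(a, a+2b)$, and then to compose on the bottom with a partition $Y = \idpart^{\otimes a}\otimes P \in\mathcal{C}(a+2b, a)$, where $P$ is a pair pattern on $2b$ strands (built from $\paarpart$, $\baarpart$, $\idpart$, their tensor products, and their rotations) chosen to match the pairing induced on the right $2b$ lower strands of $\tilde{p}$ by the block structure of $l\ast_h r$. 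The resulting sandwich $Y\tilde{p}\in\mathcal{C}(a,a)$ will then equal $l$ up to removed loops, which do not affect the partition itself. The main technical difficulty lies in choosing $P$ correctly in the presence of the size-$4$ and non-identity size-$2$ blocks of $h$, which mix $l$- and $r$-through-blocks in ways that must be reflected in the nesting of $P$. A symmetric construction, rotating the left $a$ upper points instead, will yield $r\in\mathcal{C}$.
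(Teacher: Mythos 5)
Parts (3) and (2) of your proposal are essentially sound. Your computation for (3) is the paper's argument (the paper multiplies on the other side, which is equivalent by the definition of $\preceq$), and your block-reading strategy for (2) is a workable variant: since every block of a mixing partition contains at most one index $\leqslant t(l)$ and no singletons, restricting $l\ast_{h} r$ to its left $a$ (resp.\ right $b$) columns does recover the upper block structure of $l_u$ (resp.\ $r_u$), hence $l_u$ and $r_u$ by uniqueness of the through-block decomposition, and then $h=(l_u\otimes r_u)(l\ast_{h} r)(l_u^{*}\otimes r_u^{*})$. The paper instead deduces (2) as a by-product of the closure computation it uses for (1).

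The genuine gap is in (1), and it sits exactly where you placed your ``main technical difficulty''. You propose to close off the right $b$ strands with a pair pattern $P$ that is tailored to the block structure of $l\ast_{h} r$, i.e.\ to $h$. This is problematic for two reasons. First, a block-dependent $P$ need not lie in $\CC$: the only pairings available for free are those generated by $\idpart$, $\paarpart$, $\baarpart$ under tensor product, composition and rotation, i.e.\ the \emph{noncrossing} ones, and nothing guarantees that the pairing dictated by the size-$4$ and mixed size-$2$ blocks of $h$ is noncrossing -- so the existence of a suitable $P\in\CC$ is precisely what would remain to be proved. Second, and more importantly, no tailoring is needed: the correct closure is the canonical nested (rainbow) one, pairing the $j$-th rotated upper point with the $j$-th right lower point for every $j$, which lies in every category. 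The reason this \emph{fixed} closure returns exactly $l$ is the mechanism your plan is missing: the closure composes $r_u$ against $r_u^{*}$, which equals $\idpart^{\otimes t(r)}$ by Lemma \ref{LemPPStar}, so each right-hand index $b_0>t(l)$ of $h$ becomes identified with $b_0'$; if $h$ contains a block $(a_0,b_0)$ with $a_0\leqslant t(l)$, then by symmetry of $h$ it also contains $(a_0',b_0')$, and the identification $b_0\sim b_0'$ therefore reconnects $a_0$ with $a_0'$, restoring exactly the $a_0$-th through-block of $l$; blocks $(a_0,a_0')$ and $(a_0,a_0',b_0,b_0')$ are handled the same way, and the non-through-blocks of $l$ are untouched. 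Without invoking the symmetry and idempotency built into Definition \ref{DeMixPart} in this way, part (1) is not established.
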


\begin{proof}
Let $l = l_{u}^{*}l_{u}$ and $r = r_{u}^{*}r_{u}$ be the through-block decompositions. To prove $(1)$, consider the following composition:

\setlength{\unitlength}{0.5cm}
\newsavebox{\framea}
\savebox{\framea}
{\begin{picture}(7,2)
\put(0,0){\line(1,0){7}}
\put(0,0){\line(0,1){1.5}}
\put(0,1.5){\line(1,0){7}}
\put(7,0){\line(0,1){1.5}}
\end{picture}}
\newsavebox{\frameb}
\savebox{\frameb}
{\begin{picture}(8,2)
\put(0,0){\line(1,0){8}}
\put(0,0){\line(0,1){1.5}}
\put(0,1.5){\line(1,0){8}}
\put(8,0){\line(0,1){1.5}}
\end{picture}}
\newsavebox{\framec}
\savebox{\framec}
{\begin{picture}(16,4)
\put(0,0){\line(1,0){15.5}}
\put(0,0){\line(0,1){3.5}}
\put(0,3.5){\line(1,0){15.5}}
\put(15.5,0){\line(0,1){3.5}}
\end{picture}}

\begin{center}
\begin{picture}(30,18)
 \put(3,13){\upparti{3}{1}}
 \put(3,13){\upparti{3}{2}}
 \put(7,13){$\ldots$}
 \put(3,13){\upparti{3}{7}}
 \put(3,13){\uppartii{3}{8}{24}}
 \put(3,13){\uppartii{2}{9}{23}}
 \put(3,13){\uppartii{1}{15}{17}}
 \put(15,13){$\ldots$}
 \put(23,13){$\ldots$}
 \put(3,4){\parti{3}{1}}
 \put(3,4){\parti{3}{2}}
 \put(7,3){$\ldots$}
 \put(3,4){\parti{3}{7}}
 \put(3,4){\partii{3}{8}{24}}
 \put(3,4){\partii{2}{9}{23}}
 \put(3,4){\partii{1}{15}{17}}
 \put(15,3){$\ldots$}
 \put(23,3){$\ldots$}
 \put(19,13){\parti{8}{1}}
 \put(23,8){$\ldots$}
 \put(19,13){\parti{8}{7}}
 \put(19,13){\parti{8}{8}}
 \put(3.5,10.5){\usebox{\framea}} 
 \put(7,11){$l_u$}
 \put(11,10.5){\usebox{\frameb}}
 \put(15,11){$r_u$}
 \put(3.5,4){\usebox{\framea}} 
 \put(7,4.5){$l_u^*$}
 \put(11,4){\usebox{\frameb}}
 \put(15,4.5){$r_u^*$}
 \put(3.5,6){\usebox{\framec}}
 \put(11,7.5){$h$}
\end{picture}
\end{center}
The pair partitions outside of $l*_h r$ effect a rotation of $r_{u}$ below $r_{u}^{*}$, so that it in fact composes to $r_{u}r_{u}^{*} = \idpart^{\otimes b}$. Hence, if a point $a\leqslant k$ is connected by $h$ to a point $b > k$, it is also connected to $b' > k'$, since $r_{u}r_{u}^{*} = \idpart^{\otimes b}$. As $h$ is symmetric, the points $a'$ and $b'$ are also connected by $h$, thus we obtain a pair $(a, a')$ between $l_{u}$ and $l_{u}^{*}$.  Therefore, the whole procedure yields the partition $l$ in the end, and $l\in\CC$ whenever $l \ast_{h} r\in \CC$ (note that $\idpart, \baarpart, \paarpart\in\CC$). Likewise, $r\in\CC$.

The above computation also proves that $l = l'$ and $r = r'$. Since
\begin{equation*}
(l_{u}\otimes r_{u})(l_{u} \ast_{h} r_{u})(l_{u}^{*}\otimes r_{u}^{*}) = h,
\end{equation*}
we also deduce $h = h'$. Hence, $(2)$ is proved.

To prove $(3)$, we compute
\begin{equation*}
(l \ast_{h} r)(l\otimes r) = (l_{u}^{*}\otimes r_{u}^{*}) h (l_{u}\otimes r_{u})(l\otimes r)
\end{equation*}
Using $(l\otimes r) = (l_{u}^{*}\otimes r_{u}^{*}) (l_{u}\otimes r_{u})$ and $ (l_{u}\otimes r_{u})(l_{u}^{*}\otimes r_{u}^*) = \idpart^{\otimes (a + b)}$, we get
\begin{equation*}
(l \ast_{h} r)(l\otimes r) = l \ast_{h} r.
\end{equation*}
\end{proof}

\begin{prop}\label{PropDominatedTensor}
Let $\CC$ be a category of partitions, let $p\in \Proj(a)$ and $q\in \Proj(b)$ and let $m\in X_{\CC}(p, q)$. Then, $m = p\ast_{h} q$ for some $(t(p), t(q))$-mixing partition $h$.
\end{prop}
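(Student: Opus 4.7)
My plan is to exhibit a natural candidate for $h$ as a contraction of $m$ along the through-block decompositions of $p$ and $q$, verify that it yields $m = p\ast_h q$ and is projective, and then read off the mixing property from the minimality conditions defining $X_{\mathcal{C}}(p,q)$.

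Writing $p = p_u^* p_u$ and $q = q_u^* q_u$ via Proposition \ref{PropThroughBlockDecomp}, I would set
\[
h := (p_u \otimes q_u)\, m\, (p_u^* \otimes q_u^*) \in P(t(p)+t(q),\, t(p)+t(q)).
\]
Using $p_u p_u^* = \idpart^{\otimes t(p)}$ and $q_u q_u^* = \idpart^{\otimes t(q)}$ from Lemma \ref{LemPPStar}, together with the identities $m(p\otimes q) = (p\otimes q)m = m$ coming from $m \preceq p\otimes q$, the computation
\[
(p_u^* \otimes q_u^*)\, h\, (p_u \otimes q_u) = (p\otimes q)\, m\, (p\otimes q) = m
\]
is immediate, giving $m = p\ast_h q$. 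Projectivity of $h$ follows from the same ingredients: $h^* = h$ from $m^* = m$, and $h^2 = (p_u\otimes q_u)\, m\,(p\otimes q)\, m\,(p_u^*\otimes q_u^*) = h$.

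Next I would give a combinatorial dictionary between the blocks of $h$ and the way $m$ modifies $p\otimes q$. The lemma preceding Definition \ref{DeMixPart} says that $m\preceq p\otimes q$ amounts to partitioning the $t(p)+t(q)$ through-blocks $V_1,\ldots,V_{t(p)+t(q)}$ of $p\otimes q$ into classes, each of which is either \emph{kept} as one unified through-block of $m$ or \emph{demoted} to a pair of non-through-blocks. Since the composition defining $h$ collapses each $V_i$ to the single upper point $i$ and lower point $i'$, a kept class $\{V_{i_1},\ldots,V_{i_r}\}$ yields a through-block $\{i_1,\ldots,i_r,i_1',\ldots,i_r'\}$ of $h$, while a demoted class yields the pair of non-through-blocks $\{i_1,\ldots,i_r\}$ and $\{i_1',\ldots,i_r'\}$. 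The definition of a $(t(p),t(q))$-mixing partition is exactly that every class has size at most $2$, singleton classes are always kept, and size-$2$ classes pair one $p$-side index with one $q$-side index.

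I would establish each of these restrictions by contradicting the minimality clauses of $X_{\mathcal{C}}(p,q)$. If a forbidden class is supported entirely on the $p$-side, define $l \in \ProjP(a)$ as the projective partition obtained from $p$ by applying the very same unification/demotion to the corresponding through-blocks of $p$; then $l \preceq p$, $l \neq p$, and $m \preceq l\otimes q$. The symmetric $q$-side argument produces the desired $r\preceq q$. The core obstacle is promoting these $l$ and $r$ to elements of $\mathcal{C}$. The clean sub-case is when the bad configuration is isolated in the sense that $h$ factors as $h_p\otimes\idpart^{\otimes t(q)}$: then $m = l\otimes q$, and Lemma \ref{LemTensorSubproj}(1) (applied with the trivial mixing partition) furnishes $l\in\mathcal{C}$ directly. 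In general one must reduce to this sub-case by "capping off" the $q$-side of $m$ through a composition built from $m$, $q$, and the basic partitions $\idpart,\paarpart,\baarpart\in\mathcal{C}$ (the last two lying in every category by closure under rotation of $\idpart$). This categorical extraction is the main technical hurdle, but it is parallel to the computation already carried out in the proof of Lemma \ref{LemTensorSubproj}(1), which is exactly where $\paarpart$ and $\baarpart$ are used to recover a tensor factor from a mixing composition.
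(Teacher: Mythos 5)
Your opening computation is fine: with $h := (p_u\otimes q_u)\,m\,(p_u^*\otimes q_u^*)$, associativity of composition together with $p_up_u^*=\idpart^{\otimes t(p)}$, $q_uq_u^*=\idpart^{\otimes t(q)}$ and $(p\otimes q)m(p\otimes q)=m$ indeed gives $(p_u^*\otimes q_u^*)h(p_u\otimes q_u)=m$ and the projectivity of $h$; this is also the partition the paper ends up with. But the entire content of the proposition is that this $h$ is a \emph{mixing} partition, and there your argument has a genuine gap. You propose to rule out each forbidden block of $h$ by producing some $l\prec p$ (or $r\prec q$) with $m\preceq l\otimes q$ and contradicting $m\in X_{\CC}(p,q)$. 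The minimality clauses in the definition of $X_{\CC}(p,q)$, however, only quantify over $l\in\CC$ and $r\in\CC$, so the contradiction requires the witness to lie in the category --- and that is exactly the step you defer. Your proposed fix, ``capping off'' the other tensor leg as in the proof of Lemma \ref{LemTensorSubproj}(1), cannot be cited as a parallel computation: that proof uses the hypothesis that $h$ is already a mixing partition (it is what guarantees that a point $a\leq k$ connected by $h$ to some $b>k$ is also connected to $b'$, so that the partial trace over the $q$-leg returns precisely $l$). Applied to your as-yet-unconstrained $h$, the capping composition produces \emph{some} element of $\CC(a,a)$, but you have not shown it is projective, strictly dominated by $p$, distinct from $p$, or that $m$ is dominated by its tensor with $q$. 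So the argument is circular at the decisive point.

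The paper escapes this by reversing the logical order. It builds building partitions $l_u$, $r_u$ directly from the block structure of $m$ (restricting $m$ to each group of upper points and adding one lower point per block that is a through-block of $m$ or crosses the middle), writes $m=(l_u^*\otimes r_u^*)h(l_u\otimes r_u)$, and verifies by a direct combinatorial case analysis --- with no appeal to minimality --- that this $h$ is a $(t(l),t(r))$-mixing partition. Only then does Lemma \ref{LemTensorSubproj}(1) apply to give $l,r\in\CC$, a further combinatorial argument gives $l\preceq p$ and $r\preceq q$, and the minimality clauses are invoked once, at the very end, to force $l=p$ and $r=q$. To repair your write-up you would essentially have to import this construction of $l$ and $r$ from $m$ and the accompanying verifications; as it stands, the claim that every violation of the mixing conditions contradicts membership in $X_{\CC}(p,q)$ is unproved.
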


\begin{proof}
We decompose $m$ as $m = (l_{u}^{*}\otimes r_{u}^{*})h(l_{u}\otimes r_{u})$ where $l_{u}\in \Pbp(a, \alpha)$ and $r_{u}\in \Pbp(b, \beta)$ are building partitions. We obtain $l_{u}$ in the following way. First, restrict $m$ to its first $a$ upper points. Denote this partition by $\widehat{l}_{u}\in P(a, 0)$. Now, for each block in $\widehat{l}_{u}$, insert a lower point and connect it to the block, if the corresponding block in $m$ is a through-block or if it is connected to some of the $b$ right upper points of $m$ (if both is the case, we only insert one point, not two). We can order the lower points of $l_{u}$ such that it is a building partition. Likewise we obtain $r_{u}$. The partition $h\in P(\alpha + \beta, \alpha + \beta)$ is now uniquely determined by the equation $m = (l_{u}^{*}\otimes r_{u}^{*})h(l_{u}\otimes r_{u})$. 

We will now show that $h$ is a $(t(l), t(r))$-mixing partition. Using $l_{u}l_{u}^{*} = \idpart^{\otimes \alpha}$ and $r_{u}r_{u}^{*} = \idpart^{\otimes \beta}$, we infer that $h = (l_{u}\otimes r_{u})m(l_{u}^{*}\otimes r_{u}^{*})$. Thus, $h$ is projective because $m$ is. 
The partition $h$ cannot connect two points $a_{1}, a_{2}\leqslant t(l)$ by definition of $l_{u}$. Otherwise, let $V_{a_{1}}$ and $V_{a_{2}}$ be the blocks in $l_{u}$ connected to $a_{1}$ (resp. $a_{2}$). Their restrictions $\widehat{V}_{a_{1}}$ and $\widehat{V}_{a_{2}}$ to upper points of $l_{u}$ would hence be connected in $m$ and thus also in $\widehat{l}_{u}$. But since $l_{u}$ is a building partition, there is at most one lower point in $l_{u}$ connected to the block containing $V_{a_{1}}$ and $V_{a_{2}}$. We infer that $h$ cannot connect two points $a_{1}, a_{2}\leqslant t(l)$ and likewise for points $b_{1}, b_{2} > t(l)$ (and their primed versions on the lower line of $h$). Thus, every block of $h$ has size at most four. Furthermore, by construction of $l_{u}$ and $r_{u}$, $h$ cannot contain singletons as blocks. Indeed, lower points appear in $l_{u}$ only if the corresponding blocks in $l_{u}$ are connected to some other blocks in $r_{u}$ or $l_{u}^{*}$. If $h$ cuts these lines, they are also cut in $m$, which is a contradiction to the construction of $l_{u}$ and $r_{u}$.

Finally, if $h$ connects a point $a\leq t(l)$ to a point $b'$ on the lower line, then also $a'$ and $b$ are connected, because $h$ is symmetric. Since $h$ is also idempotent, we infer that $a$ and $a'$ are connected, thus we obtain the block $(a, a', b, b')$. Therefore, if a block of $h$ has size strictly greater than two, it must be of size four. By symmetry, blocks of size two are either of the form $(a, a')$, $(b, b')$, $(a, b)$ or $(a', b')$ with $a\leqslant t(l)$ and $b > t(l)$. We conclude that $h$ is a $(t(l), t(r))$-mixing partition.

Setting $l = l_{u}^{*}l_{u}\in \ProjP(a)$ and $r = r_{u}^{*}r_{u}\in \ProjP(b)$, we infer that $m$ is of the form $m = l\ast_{h} r$. From Lemma \ref{LemTensorSubproj}, we deduce $l, r\in\CC$. Furthermore, $l\preceq p$ and $r\preceq q$.  
To prove this, note that $l_up$ is a part of the composition procedure in $m(p\otimes q)$ which yields $m$. Thus, if two upper points $x$ and $y$ are connected to each other in $l_up$, they are also connected in $m$, and hence in $l_u$, too. On the other hand, if $x$ and $y$ are  connected in $l_u$, then they are either connected in $p_u$, too,  or the blocks $V_x$ and $V_y$ connected to $x$ resp. to $y$ in $p_u$ are through-blocks of $p_u$ (since $m(p\otimes q)=m$). In both cases, $x$ and $y$ are connected in $l_up$ and we conclude that $l_u$ and $l_up$ coincide when restricted to their upper points. 
Let us now prove that also their through-block structure is the same. Consider a block of upper points in $l_up$. If it is not connected to some lower points of $l_up$ -- in other words, if it is a non-through-block in $l_up$ -- it yields a non-through-block in $m$, and hence in $l_u$.
Conversely, assume that $V$ is a (upper) non-through-block in $l_u$ and let $\{x_1,\ldots,x_n\}$ be the corresponding upper points in $p$, and $\{x'_1,\ldots,x'_n\}$ the corresponding lower points. Then, if a point $x_i$ gets connected by $p$ to a lower point $z'$, then $z'=x'_j$ for some $j$ (since $p$ is projective). But no point $x'_j$ is connected by $l_u$ to a lower point, since $V$ is a non-through-block of $l_u$. Thus, $V$ is also a non-through-block in $l_up$.
%
Thus, $l_{u} = l_{u}p$, which yields $l\preceq p$.

We conclude the proof by noticing that $l\neq p$ or $r\neq q$ would contradict $m\in X_{\CC}(p, q)$, since $m\preceq l\otimes r$ by Lemma \ref{LemTensorSubproj}.
\end{proof}

We can deduce a few useful lemmata on the order structure of $\Proj(k)$ from what has been done so far.

\begin{lem}
Let $\CC$ be a category of partitions and let $p\in\Proj(k)$ be a projective partition. If there is a partition $q\in\CC$ such that $t(q) < t(p)$, then there exists a \emph{projective} partition  $q'\in\CC$ with $t(q') < t(p)$ and $q'\preceq p$.
\end{lem}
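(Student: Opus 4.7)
The plan is to first construct a projective partition $\tilde Q\in\CC(k,k)$ with $t(\tilde Q)<t(p)$, and then to project it through $p$ by forming $q':=(\tilde Q p)^*(\tilde Q p)$.

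To construct $\tilde Q$: replacing $q\in\CC(a,b)$ by $Q:=q^*q$, Proposition~\ref{PropProjectiveQstarQ} and Lemma~\ref{LemPPStarProj} give a projective partition $Q\in\CC(a,a)$ with $t(Q)=t(q)<t(p)$. Since $\idpart\in\CC$, its rotations $\paarpart$ and $\baarpart$ lie in $\CC$, hence $\baarpart\otimes\paarpart\in\CC(2,2)$ is projective with no through-blocks. When $k\geq a$ and $k-a$ is even, I would set $\tilde Q:=Q\otimes(\baarpart\otimes\paarpart)^{\otimes(k-a)/2}\in\CC(k,k)$, which is projective and satisfies $t(\tilde Q)=t(Q)<t(p)$. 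In the remaining cases, one first modifies $q$ by rotation or by composition with pair partitions to bring its upper count to a value compatible with the $\baarpart\otimes\paarpart$-filler.

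For the projection: define $q':=(\tilde Q p)^*(\tilde Q p)\in\CC(k,k)$. By Proposition~\ref{PropProjectiveQstarQ} it is projective, and since $\tilde Q$ is projective we have $\tilde Q^*\tilde Q=\tilde Q$, hence $q'=p\tilde Q p$. Then $pq'=p^2\tilde Q p=p\tilde Q p=q'$ using $p^2=p$, so $q'\preceq p$. The composition bound $t(pq)\leq\min(t(p),t(q))$ from Remark~\ref{RemCatOpAndBlockNumber} yields $t(q')=t(\tilde Q p)\leq\min(t(\tilde Q),t(p))<t(p)$, which is the desired inequality.

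The hard part is the size adjustment producing $\tilde Q$. Tensoring by $\baarpart\otimes\paarpart$ preserves $t$, but only covers the case when $k-a$ is non-negative and even. When the parities of $k$ and $a$ disagree, the only generic filler with odd upper count is $\idpart$, which contributes one to the through-block count; this is harmless when the gap $t(p)-t(q)$ is at least $2$, but borderline when it equals $1$. In that borderline case one has to rotate $q$ beforehand so as to shift its upper count by one without raising the number of through-blocks, which is feasible whenever $q$ has a singleton or a through-block with a unique point on one of its sides, using the local analysis of how rotations affect through-blocks.
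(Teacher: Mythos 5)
The second half of your argument is sound and is exactly the paper's mechanism: given a partition composable with $p$, conjugate it by $p$ and apply $t(xy)\leq\min(t(x),t(y))$. Indeed the paper's entire proof is the one line $q'=pq^{*}qp$, together with $pq'=q'$ and $t(q')=t(pq^{*})\leq t(q)<t(p)$ via Lemma \ref{LemPPStarProj}; your $q'=(\tilde Q p)^{*}(\tilde Q p)=p\tilde Q p$ is the same computation with an extra, unnecessary passage to a projective $\tilde Q$. Everything you add on top of that is the preliminary step of manufacturing $\tilde Q\in\CC(k,k)$ out of an arbitrary $q\in\CC(a,b)$, and that step is where the gap is.

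That size-adjustment step is not merely "the hard part": it cannot be carried out in general, because under your reading of the hypothesis the statement itself is false. Take $\CC=NC_{2}$, $k=1$ and $p=\idpart\in\Proj(1)$, so $t(p)=1$. The partition $q=\paarpart\in NC_{2}(2,0)$ has $t(q)=0<t(p)$, so the hypothesis as you read it is satisfied. But $q'\preceq p$ forces $q'\in\CC(1,1)$, and $NC_{2}(1,1)=\{\idpart\}$, which has $t=1$; there is no candidate $q'$ at all. Consequently no amount of rotating, tensoring with $\baarpart\otimes\paarpart$, or composing with pair partitions can produce the required $\tilde Q$ from such a $q$ -- your fallback ("feasible whenever $q$ has a singleton or a through-block with a unique point on one of its sides") is a conditional remark whose condition can fail, not a proof. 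The lemma is only true, and the paper's formula $q'=pq^{*}qp\in\CC(k,k)$ only typechecks, if the hypothesis is read as the existence of $q\in\CC(k,l)$ for some $l$, i.e.\ of a partition already composable with $p$. Once you adopt that reading, your padding machinery disappears and your second half, applied directly to $q$ (which need not be made projective first, since $(qp)^{*}(qp)$ is automatically projective by Proposition \ref{PropProjectiveQstarQ}), is a complete proof.
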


\begin{proof}
Set $q'= pq^{*}qp\in \CC(k,k)$. Then, $q'$ is projective by Proposition \ref{PropProjectiveQstarQ} and $q'\preceq p$. Furthermore, using Lemma \ref{LemPPStarProj}, we have
\begin{equation*}
t(q') = t((pq^{*})(pq^{*})^{*}) = t(pq^{*})\leq \min(t(p), t(q^{*}))\leq t(q^{*}) = t(q) < t(p)
\end{equation*}
\end{proof}

\begin{lem}\label{LemComparisonProjective}
Let $p, q\in \ProjP(k)$ be two projective partitions such that $q\preceq p$. Then $t(q)\leqslant t(p)$ with equality if and only if $p = q$.
\end{lem}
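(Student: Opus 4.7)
The plan is to split the statement into the inequality $t(q)\leqslant t(p)$ and the equality case, handling each via results already at our disposal.

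For the inequality, I would invoke the composition bound from Remark \ref{RemCatOpAndBlockNumber}(2), which gives $t(pq)\leqslant \min(t(p),t(q))$ for any composable partitions. Since $q\preceq p$ is defined by $pq=q$, we immediately get
\begin{equation*}
t(q)=t(pq)\leqslant t(p).
\end{equation*}

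For the equality case, the cleanest route is via the structural description of $\preceq$ provided by the preceding (unnumbered) lemma: if $q\preceq p$, then $q$ is obtained from $p$ by keeping every non-through-block intact, while through-blocks of $p$ are either (a) left invariant, (b) unified with other through-blocks into larger through-blocks, or (c) turned into non-through-blocks. Operation (b) strictly decreases the number of through-blocks (it merges at least two into one), and operation (c) strictly decreases it (by converting a through-block into a non-through-block). Therefore, if $t(q)=t(p)$, neither (b) nor (c) can occur for any through-block, so every through-block of $p$ must be left invariant. Combined with the fact that the non-through-block structure is preserved \emph{verbatim}, we conclude $p=q$. Conversely, $p=q$ trivially yields $t(p)=t(q)$.

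The argument is essentially a bookkeeping exercise once one trusts the preceding structural lemma, and no obstacle is expected: the non-trivial combinatorial work (the operational description of $\preceq$) has already been carried out. An alternative route would use Lemma \ref{LemProjAndTP} together with Proposition \ref{PropRankTp}: $q\preceq p$ gives $T_q\leqslant T_p$ as projections, whose ranks are $N^{t(q)}$ and $N^{t(p)}$ respectively, so the inequality is immediate, and equality of ranks of nested projections forces $T_q=T_p$, hence $p=q$ (by injectivity of $p\mapsto \mathring T_p$ for $N\geqslant 2$). I would favor the combinatorial proof above since it does not require passing through $N\geqslant 2$ and makes the statement intrinsic to partitions.
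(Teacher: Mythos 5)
Your proof is correct, and for the equality case it takes a genuinely different route from the paper's. The inequality part coincides exactly with the paper: both use $t(q)=t(pq)\leqslant\min(t(p),t(q))\leqslant t(p)$ from Remark \ref{RemCatOpAndBlockNumber}. For the equality case, however, the paper argues through the operators: by Proposition \ref{PropRankTp} the projections $T_p$ and $T_q$ have rank $N^{t(p)}$ and $N^{t(q)}$, by Lemma \ref{LemProjAndTP} they are nested, so equal ranks force $T_p=T_q$ and hence $p=q$ by faithfulness of $p\mapsto \mathring T_p$ for $N\geqslant 2$ --- this is precisely the ``alternative route'' you sketch at the end. Your preferred argument instead reads the conclusion off the unnumbered structural lemma describing $\preceq$: since every listed operation on through-blocks other than ``leave invariant'' strictly decreases their number and the non-through-block structure is preserved verbatim, $t(q)=t(p)$ forces $q=p$. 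This is a valid use of a result the paper establishes beforehand, and it buys exactly what you claim: a statement and proof intrinsic to partitions, with no detour through a choice of $N\geqslant 2$ and no reliance on faithfulness of the functor $p\mapsto T_p$. The paper's version is marginally shorter given that the operator machinery is already set up and is the version it reuses elsewhere (e.g.\ in Lemma \ref{LemGeneralt}), but both proofs are complete.
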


\begin{proof}
By Remark \ref{RemCatOpAndBlockNumber}, we have $t(q) = t(pq)\leqslant \min(t(p), t(q))\leqslant t(p)$. If $t(p) = t(q)$, then the projections $T_{p}$ and $T_{q}$ have the same rank, by Proposition \ref{PropRankTp}. Using Lemma \ref{LemProjAndTP}, we infer $T_{p} = T_{q}$, and hence $p = q$.
\end{proof}

\begin{lem}\label{LemComparisonProjective2}
Let $p, q\in P(k, k)$ be two partitions such that $p$ is projective, $pq = q = qp$ and $t(q) = t(p)$. Then, there is a through-partition $r\in P_{2}(t(p), t(p))$ such that $q = p_{u}^{*}rp_{u}$, where $p = p_{u}^{*}p_{u}$ is the through-block decomposition of $p$.
\end{lem}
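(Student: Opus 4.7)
The goal is to leverage the hypotheses $pq = q = qp$ and $t(q) = t(p)$ to force the building-partition factors of the through-block decomposition of $q$ to coincide with $p_u$. Writing $q = q_l^* q_m q_u$ by Proposition \ref{PropThroughBlockDecomp} (with $q_u, q_l \in \Pbp(k, t(q))$ building and $q_m \in P_2(t(q), t(q))$ a through-partition) and $p = p_u^* p_u$ (since $p$ is projective, so $p_m = \idpart^{\otimes t(p)}$), the conclusion reduces to proving $q_l = q_u = p_u$, after which $r := q_m$ completes the proof.

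\textbf{Step 1: establish the partial-isometry identity $qq^* = q^*q = p$.} Taking $*$ of $pq = q$ and using $p^* = p$ yields $q^*p = q^*$; symmetrically $pq^* = q^*$ follows from $qp = q$. Hence $p(qq^*) = (pq)q^* = qq^*$, and since $qq^*$ is projective by Lemma \ref{LemPPStarProj}, one has $qq^* \preceq p$. The same lemma gives $t(qq^*) = t(q) = t(p)$, so Lemma \ref{LemComparisonProjective} forces $qq^* = p$. An entirely analogous computation yields $q^*q = p$. Intuitively this expresses that $q$ is a partial isometry whose source and range projections both equal $T_p$.

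\textbf{Step 2: conclude via uniqueness of the through-block decomposition.} A brief telescoping, based on the identities $q_u q_u^* = q_l q_l^* = q_m q_m^* = q_m^* q_m = \idpart^{\otimes t(q)}$ supplied by Lemma \ref{LemPPStar}, collapses the middle layers of $(q_l^* q_m q_u)(q_u^* q_m^* q_l)$ to give $qq^* = q_l^* q_l$, and similarly $q^*q = q_u^* q_u$. Combined with Step 1, this yields $q_l^* q_l = p_u^* p_u = q_u^* q_u$. But each of these expressions is itself a through-block decomposition of a projective partition (with trivial middle $\idpart^{\otimes t(p)}$), so the uniqueness assertion in Proposition \ref{PropThroughBlockDecomp} immediately forces $q_l = p_u = q_u$. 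Substituting back gives $q = p_u^* q_m p_u$, which is the required form with $r := q_m$. There is no serious obstacle: the only genuine content is the reduction in Step 1, after which the rest is pure uniqueness-of-decomposition bookkeeping.
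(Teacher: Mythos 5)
Your proof is correct and follows essentially the same route as the paper: establish $q^{*}q = qq^{*} = p$ via projectivity of $q^{*}q$, the domination $q^{*}q \preceq p$, the equality of through-block numbers and Lemma \ref{LemComparisonProjective}, and then conclude by uniqueness of the through-block decomposition. The only difference is that you spell out the verification of $qq^{*}\preceq p$ and the telescoping identities $qq^{*}=q_{l}^{*}q_{l}$, $q^{*}q=q_{u}^{*}q_{u}$ in slightly more detail than the paper, which simply cites Lemma \ref{LemPPStarProj}.
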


\begin{proof}
The partition $q^{*}q$ is projective and satisfies $q^{*}q\preceq p$. Furthermore, $t(q^{*}q) = t(q)$ by Lemma \ref{LemPPStarProj}. Thus, $p = q^{*}q$ by Lemma \ref{LemComparisonProjective}. Analogously we have $p = qq^{*}$. Now, if $q = q_{l}^{*}rq_{u}$ is the through-block decomposition from Proposition \ref{PropThroughBlockDecomp}, we have
\begin{equation*}
q_{u}^{*}q_{u} = q^{*}q = qq^{*} = q_{l}^{*}q_{l}
\end{equation*}
by Lemma \ref{LemPPStarProj}. By the uniqueness of the through-block decomposition of $q^{*}q$, we infer $q_{l} = q_{u}$. Furthermore, $q_{u}^{*}q_{u} = q^{*}q = p = p_{u}^{*}p_{u}$. Again by uniqueness of the through-block decomposition, we see that $q_{u} = p_{u}$.
\end{proof}

\section{Easy quantum groups}\label{SecQuantumGroups}

This short section is devoted to some preliminaries. It contains definitions and basic facts in the theory of compact quantum groups as well as an introduction to easy quantum groups.

\subsection{Compact quantum groups}

We briefly recall some definitions and results of the theory of compact quantum groups in order to fix notations. The reader is referred to \cite{woronowicz1995compact} or \cite{maes1998notes} for details and proofs.

\begin{de}\label{DefCMQG}
A \emph{compact quantum group} is a pair $\G = (C(\G), \D)$ where $C(\G)$ is a unital C*-algebra and $\D: C(\G)\rightarrow C(\G)\otimes C(\G)$ is a unital $*$-homomorphism such that $(\D\otimes \ii)\circ\D = (\ii \otimes \D)\circ \D$ and the linear spans of $\D(C(\G))(1\otimes C(\G))$ and $\D(C(\G))(C(\G)\otimes 1)$ are dense in $C(\G)\otimes C(\G)$. 
\end{de}

Here, $\ii$ denotes the identity map of the C*-algebra $C(\G)$ and $\D$ is called the \emph{coproduct}.

\begin{de}
Let $\G$ be a compact quantum group. A \emph{representation} of $\G$ of dimension $n$ is a matrix $(u_{i, j})\in M_{n}(C(\G)) \simeq C(\G) \otimes M_{n}(\C)$ such that $\D(u_{i, j}) = \sum_{k} u_{i, k}\otimes u_{k, j}$ for every $1 \leqslant i, j \leqslant n$.
\end{de}

All the compact quantum groups considered in this paper will be \emph{compact matrix quantum groups}. This means that we are given a distinguished representation $u$ of $\G$, the coefficients of which generate a dense subalgebra of $C(\G)$. This gives a more algebraic description of $\G$: Consider a Hopf $*$-algebra $\mathcal{A}$ generated by $N^{2}$ elements $(u_{i, j})_{1\leqslant i, j \leqslant N}$, where the structure maps are given by:
\begin{eqnarray*}
\D(u_{i, j}) & = & \sum_{k} u_{i, k}\otimes u_{k, j} \\
\varepsilon(u_{i, j}) & = & \delta_{i, j} \\
S(u_{i, j}) & = & u_{j, i}^{*}
\end{eqnarray*}
Furthermore, the matrices $u=(u_{i,j})$ and $u^t$ are invertible.
Then, $\D$ extends to a bounded map on the envelopping C*-algebra $C(\G)$ of $\mathcal{A}$, yielding a compact quantum group $(C(\G), \Delta)$.

An \emph{intertwiner} between two representations $u$ and $v$ of dimension respectively $n$ and $m$ is a linear map $T: \C^{n} \rightarrow \C^{m}$ such that
\begin{equation*}
(\ii \otimes T)u = v(\ii \otimes T).
\end{equation*}
The set of intertwiners between $u$ and $v$ is denoted $\Hom(u, v)$. If there exists a unitary intertwiner between $u$ and $v$, they are said to be \emph{unitarily equivalent}. A representation is said to be \emph{irreducible} if its only self-intertwiners are the scalar multiples of the identity. The \emph{tensor product} of the two representations $u$ and $v$ is the representation
\begin{equation*}
u\otimes v = u_{12}v_{13}\in C(\G)\otimes M_{n}(\C)\otimes M_{m}(\C) \simeq C(\G)\otimes M_{nm}(\C).
\end{equation*}

\begin{rem}
Here we used the \emph{leg-numbering} notations: For an operator $X$ acting on a tensor product, we set $X_{12} = X\otimes1$, $X_{23} = 1\otimes X$ and $X_{13} = (\Sigma\otimes 1)(1\otimes X)(\Sigma\otimes 1)$.
\end{rem}

The advantage of the notion of compact quantum group is that the classical Peter-Weyl theory extends to this setting.

\begin{thm}[Woronowicz]
Every unitary representation of a compact quantum group is unitarily equivalent to a direct sum of irreducible unitary representations. Moreover, any irreducible representation is finite-dimensional.
\end{thm}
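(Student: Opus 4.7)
The strategy rests on two pillars: the construction of the Haar state on $C(\G)$ and a spectral/averaging argument producing intertwiners. First, I would prove the existence of a state $h$ on $C(\G)$ satisfying $(h\otimes\ii)\circ\D = (\ii\otimes h)\circ\D = h(\cdot)1$. The standard route is to consider the convolution semigroup of states $\phi \ast \psi = (\phi\otimes\psi)\circ\D$ and apply a Markov--Kakutani type fixed-point argument to the Ces\`aro means $\frac{1}{n}\sum_{k=1}^{n}\phi^{\ast k}$ for an arbitrary starting state $\phi$; density of $\D(C(\G))(1\otimes C(\G))$ and $\D(C(\G))(C(\G)\otimes 1)$ in the Definition \ref{DefCMQG} axioms is what allows the weak-$\ast$ limit to be genuinely bi-invariant.

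With $h$ at hand, the key tool is the \emph{averaging operator}. Given a finite-dimensional unitary representation $u\in M_{n}(C(\G))$, define
\begin{equation*}
E(T) = (h\otimes \ii)\bigl(u^{*}(1\otimes T)u\bigr) \in M_{n}(\C), \qquad T\in M_{n}(\C).
\end{equation*}
Using bi-invariance of $h$ together with the representation identity $(\ii\otimes\D)(u) = u_{12}u_{13}$, a direct computation shows $(1\otimes E(T))u = u(1\otimes E(T))$, so $E(T)\in\Hom(u,u)$. If $u$ is not irreducible, choose $T$ self-adjoint such that $E(T)$ is not a scalar (a non-trivial invariant subspace $W\subset \C^{n}$ gives a nonzero intertwiner $p\neq 0,1$ via $T = p_{W}$, and faithfulness-type properties of $h$ force $E(p_{W})\neq 0$). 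Its spectral projections are then nontrivial self-intertwiners and decompose $u$; iterating yields a decomposition of any finite-dimensional unitary representation into irreducibles.

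To extend to arbitrary unitary representations, I would pass through the dense Hopf $\ast$-subalgebra $\Pol(\G) \subset C(\G)$ spanned by matrix coefficients of finite-dimensional unitary representations. A general unitary corepresentation $v$ on a Hilbert space $H$ induces a $\Pol(\G)$-comodule structure on a dense subspace $H_{0}\subset H$, because for every $\xi\in H$ the functionals $\eta\mapsto\langle\eta,v(\xi\otimes 1)\rangle$ can be approximated by elements of $\Pol(\G)$. Standard coalgebra theory then shows that $H_{0}$ is the algebraic direct sum of finite-dimensional $\Pol(\G)$-subcomodules, which one unitarizes by averaging the inner product against $h$; closing up gives a unitary Hilbert-space decomposition of $H$ into finite-dimensional invariant subspaces, and the previous paragraph finishes the job. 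In particular, any irreducible unitary representation is finite-dimensional.

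The main obstacle, in my view, is the construction and faithfulness of the Haar state: once $h$ exists and has the right invariance and positivity properties, the averaging argument is essentially a verbatim transcription of the classical compact-group Peter--Weyl proof. The second delicate point is the reduction from infinite- to finite-dimensional representations, since one must justify that every vector in $H$ really does lie in a finite-dimensional invariant subspace; this is where the density of $\Pol(\G)$ in $C(\G)$ (itself a byproduct of the decomposition theorem in the finite-dimensional case) has to be handled carefully to avoid circularity.
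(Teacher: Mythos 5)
This theorem is quoted in the paper as background (attributed to Woronowicz, with the reader referred to the standard references for proofs), so there is no in-paper argument to compare against; your sketch is essentially the canonical proof from those references and its overall architecture is sound. Two remarks. First, your worry about ``faithfulness'' of the Haar state is misplaced: $h$ need not be faithful on the universal C*-algebra $C(\G)$ (it is faithful only on the reduced version), and faithfulness is not what makes the averaging work. The relevant facts are that $E$ is unital and positive, and that $E(S)=S$ for every $S\in\Hom(u,u)$ (since $u^{*}(1\otimes S)u = u^{*}u(1\otimes S) = 1\otimes S$ for unitary $u$), so $E$ is a conditional expectation onto the intertwiner algebra; reducibility then hands you a non-scalar self-adjoint intertwiner directly, and its spectral projections split $u$ without any appeal to $E(p_{W})\neq 0$. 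Indeed for a \emph{unitary} representation an invariant subspace automatically has invariant orthogonal complement, so its orthogonal projection already lies in $\Hom(u,u)$. Second, your flagged circularity in the infinite-dimensional step is real but resolvable in the standard way: one first builds $h$, then proves the orthogonality relations and the density of $\Pol(\G)$ using the (right) regular representation, and only then runs the comodule decomposition; in the compact \emph{matrix} case of this paper the density of $\Pol(\G)$ is part of the definition, so the issue does not arise. With those adjustments your outline is a correct, if compressed, account of the Peter--Weyl theorem for compact quantum groups.
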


To describe the representation theory of a compact quantum group, one needs to be able to split any tensor product of irreducible representations into a sum of irreducible representations. The explicit formul\ae{} for such splittings are called \emph{fusion rules}. From the point of view of Pontryagin duality, the fusion rules can be thought of as the "group law" of the dual discrete quantum group.

Compact quantum group also satisfy a version of Tannaka-Krein duality, proved by S.L. Woronowicz in \cite{woronowicz1988tannaka}. Let us give a non-standard version of this result, which is fundamental in the definition of easy quantum groups. For a compact matrix quantum group $(\G, u)$, let us denote by $\Hom_{\G}(k, l)$ the space of intertwiners between $u^{\otimes k}$ and $u^{\otimes l}$. The "Tannakanian philosophy" can be expressed in the following way :

\begin{philo}
There is a one-to-one correspondence between compact matrix quantum groups $(\G, u)$ such that $u\simeq \overline{u}$ and families $\{\Hom_{\G}(k, l)\}_{k, l\in \N_{0}}$ of intertwiner spaces.
\end{philo}

\subsection{Easy quantum groups}

In 1995 and 1998, S. Wang introduced (\cite{wang1995free} and \cite{wang1998quantum})  two very important examples of compact matrix quantum groups. For $N\in \N$, the universal C*-algebras
\begin{eqnarray*}
A_{o}(N) & = & C^{*}(u_{i, j}, 1\leqslant i, j\leqslant N | u_{i, j}^{*} = u_{i, j}, \\
& & \sum_{k} u_{i, k}u_{j, k} = \sum_{k = 1}^{N} u_{k, i}u_{k, j} = \delta_{i, j}) \\
A_{s}(N) & = & C^{*}(u_{i, j}, 1\leqslant i, j\leqslant N | u_{i, j}^{*} = u_{i, j} = u_{i, j}^{2}, u_{i, k}u_{j, k} = u_{k, i}u_{k, j} = 0, i\neq j, \\
& & \sum_{k = 1}^{N} u_{i, k} = \sum_{k = 1}^{N} u_{k, j} = 1)
\end{eqnarray*}
can be endowed with the comultiplications $\Delta(u_{i,j})=\sum u_{i,k}\otimes u_{k,j}$ turning them into compact quantum groups in the sense of Definition \ref{DefCMQG}. They are denoted $O_{N}^{+}$ and $S_{N}^{+}$ respectively. Note that if we add the relations $u_{i, j}u_{k, l} = u_{k, l}u_{i, j}$ to $A_{o}(N)$ (resp. $A_{s}(N)$), we obtain the C*-algebra $C(O_{N})$ (resp. $C(S_{N})$) of continous functions on the orthogonal group $O_{N}$ (resp. on the symmetric group $S_{N}$). Hence, $O_{N}^{+}$ is called the \emph{free orthogonal quantum group}, whereas $S_{N}^{+}$ is called the \emph{free symmetric quantum group}. 

Using the linear maps $T_{p}$ from Section \ref{SecPartitions} indexed by partitions $p\in P$, a basis of the intertwiner spaces of $O_{N}^{+}$ and $S_{N}^{}+$ can be given very explicitely:
\begin{eqnarray*}
\Hom_{O_{N}^{+}}(k, l) & = \Span\{T_{p} | p\in NC_{2}(k, l)\} \\
\Hom_{S_{N}^{+}}(k, l) & = \Span\{T_{p} | p\in NC(k, l)\}
\end{eqnarray*}
Likewise, the groups $O_{N}$ and $S_{N}$ can be seen as quantum groups, i.e. we can equip the function algebras $C(O_{N})$ and $C(S_{N})$ with comultiplications such that they are compact matrix quantum groups in the sense of Section 3.1. Their intertwiner spaces are given by:
\begin{eqnarray*}
\Hom_{O_{N}}(k, l) & = \Span\{T_{p} | p\in P_{2}(k, l)\} \\
\Hom_{S_{N}}(k, l) & = \Span\{T_{p} | p\in P(k, l)\}
\end{eqnarray*}
By functoriality of the intertwiner spaces, we know that for any compact matrix quantum group $S_{N}\subset \G \subset O_{N}^{+}$, its intertwiner space fulfills:
\begin{equation*}
\Span\{T_{p} | p\in P(k, l)\}\supset \Hom_{\G}(k, l)\supset  \Span\{T_{p} | p\in NC_{2}(k, l)\}
\end{equation*}

In 2009, T. Banica and R. Speicher came up with the following natural definition \cite[Def 6.1]{banica2009liberation}.

\begin{de}
A compact matrix quantum group $S_{N}\subset \G\subset O_{N}^{+}$ is called \emph{easy} if its intertwiner spaces are of the form
\begin{eqnarray*}
\Hom_{\G}(k, l) = \Span\{T_{p} | p\in \CC(k, l)\},\qquad k, l\in\N_{0}.
\end{eqnarray*}
The collection of sets $\CC(k, l)\subset P(k, l)$ is denoted by $\CC$.
\end{de}

Since intertwiner spaces of compact quantum groups form tensor categories, the above collection of sets $\CC\subset P$ is a category of partitions in the sense of Definition \ref{DeCatPart}. Examples of easy quantum groups include $S_{N}, O_{N}, S_{N}^{+}$ and $O_{N}^{+}$. By the "tannakanian philosophy", categories of partitions and intermediate easy quantum groups in between $S_{N}$ and $O_{N}^{+}$ are in one-to-one correspondence. Hence, easy quantum groups carry a lot of combinatorial data. In this sense, they could also be called \emph{partition quantum groups}. We refer to \cite{banica2009liberation}, \cite{banica2010classification} and \cite{weber2012classification} for details concerning easy quantum groups.

\section{From projective partitions to representations}\label{SecRepresentation}

We now start our study of the representation theory of a general easy quantum group $\G$ from the point of view of its category of partitions $\CC$. This section consists in a comprehensive study of a particular family of representations of $\G$, built out of projective partitions. We make no particular assumption on the category of partitions $\CC$. In general, we will \emph{not} be able to give a full description of all irreducible representations and their fusion rules, but we will see that the coarser structure of the set of combinatorial representations we build is quite well-behaved and already yields a lot of information on the quantum group. In special cases (see Section \ref{SecSpecial}), it is enough for deducing the complete fusion rules.

\subsection{Subrepresentations associated to partitions}

The first step is to understand the decomposition of $u^{\otimes k}$ into a sum of "small" subrepresentations. Here, "small" means being a sum of a small number of irreducible representations. We know that the space of intertwiners $\Hom(u^{\otimes k}, u^{\otimes k})$ is spanned by the operators $T_{p}$ for $p\in \CC$ and that subrepresentations of $u^{\otimes k}$ exactly correspond to orthogonal projections in this space. Thus, we can first look at partitions $p$ such that the associated operator $T_{p}$ is a projection, i.e. we look at projective partitions. We first want to investigate the following natural question: How far can we split the $k$-th tensor product of the fundamental representation using these partitions? 

We begin with assigning subrepresentations of $u^{\otimes k}$ to projective partitions $p\in \Proj(k)$. A naive idea would be to take the subrepresentation $(\ii\otimes T_{p})(u^{\otimes k})$ for $p\in \Proj(k)$. However, this representation would be far from being irreducible since for example the identity partition $\idpart^{\otimes k}$ would yield the representation $u^{\otimes k}$. We can nevertheless take advantage of the order structure on projective partitions to refine these projections.

\begin{de}\label{DeSubreprAssocToPartitions}
Let $p\in \Proj(k)$ be a projective partition, write $q\prec p$ if $q\preceq p$ and $q\neq p$ and set
\begin{equation*}
R_{p} = \bigvee_{q\in \Proj(k),\; q \prec p} T_{q},
\end{equation*}
where the supremum is taken among \emph{all projections of $\Hom(u^{\otimes k}, u^{\otimes k})$}. We define a projection
\begin{equation*}
P_{p} = T_{p} - R_{p} \in \Hom(u^{\otimes k}, u^{\otimes k})
\end{equation*}
and denote by $u_{p}$ the subrepresentation $(\ii\otimes P_{p})(u^{\otimes k})$ of $u^{\otimes k}$.
\end{de}

\begin{rem}
There is an ambiguity in the previous description. In fact, the projection $P_{p}$ may collapse to $0$ if the linear maps associated to partitions are not linearly independent. More precisely, the projection $R_{p}$ can be written as a linear combination of operators $T_{r}$ for partitions $r \in \CC(k, k)$ and this linear combination can yield the whole projection $T_{p}$. In that case, we are not producing any interesting subrepresentation. This is the first appearance of a problem with which we will have to deal all along this work.
\end{rem}

\begin{rem}\label{RemRp}
Note that by a straightforward induction, $R_{p}$ is the supremum of the projections $P_{q}$, where $q$ ranges over the set of projective partitions strictly dominated by $p$.
\end{rem}

The existence of the supremum projection $R_{p}$ inside $\Hom(u^{\otimes k}, u^{\otimes k})$ is ensured by a general von Neumann algebra argument (see for instance \cite[Chap V, Prop 1.1]{takesaki2002theory}). Hence, we know that $R_{p}$ is a linear combination of maps $T_{r}$ for partitions $r\in \CC(k, k)$. Since $T_{p}$ dominates $R_{p}$ by definition, we can multiply by $T_{p}$ on the left and on the right to see that $R_{p}$ is a linear combination of maps $T_{prp}$. Otherwise said, $R_{p}$ can be written as $\sum_{i} \lambda_{r_{i}}T_{r_{i}}$ where $r_{i}^{*}r_{i}$ and $r_{i}r_{i}^{*}$ are dominated by $p$. However, later on we will need to know that the partitions $r_{i}$ can be chosen to satisfy $t(r_{i}) < t(p)$. This is not completely obvious and relies on the following linear algebraic lemma.

\begin{lem}\label{LemvNProj}
Let $M$ be a finite-dimensional von Neumann algebra, let $(P_{i})_{i}$ be a family of orthogonal projections and let
\begin{equation*}
R = \bigvee P_{i}
\end{equation*}
be their supremum. Then, there is a family of minimal \emph{non-orthogonal} projections $(Q_{k})_{k}$ in $M$ such that $R = \sum_{k} Q_{k}$ and for every $k$ there is an index $i$ such that $\Ima(Q_{k})\subset \Ima(P_{i})$.
\end{lem}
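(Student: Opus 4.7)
The strategy is to reduce to the case of a type~I factor and then build the desired decomposition iteratively. Because $M$ is finite-dimensional, the supremum $R$ is attained on a finite subfamily, so we may assume $(P_i)_i = (P_1,\dots,P_n)$. Writing $M = \bigoplus_{\alpha} M_{n_\alpha}(\C)$ as a direct sum of matrix factors, every projection in $M$ splits block-diagonally, and both the hypothesis $R = \bigvee_i P_i$ and the desired conclusion respect this decomposition. It therefore suffices to prove the lemma when $M = B(H)$ for a single finite-dimensional Hilbert space $H$, in which case the minimal projections of $M$ are exactly the rank-one orthogonal projections and $R$ is the projection onto $V := \sum_i \Ima(P_i) \subseteq H$.

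In this setting the conclusion amounts to finding an orthonormal basis $(v_k)$ of $V$ such that each $v_k$ lies in some $\Ima(P_{i(k)})$: the rank-one projections $Q_k := v_k v_k^*$ are then pairwise orthogonal minimal projections of $M$ summing to $R$ and satisfying $\Ima(Q_k) \subseteq \Ima(P_{i(k)})$. To build such a basis, I would proceed inductively on $j$, setting $R^{(j)} := P_1 \vee \dots \vee P_j$ and extending a previously constructed orthonormal basis of $\Ima(R^{(j-1)})$ (each vector of which already lies in some $\Ima(P_i)$, $i<j$) into one of $\Ima(R^{(j)})$ by adjoining vectors drawn from $\Ima(P_j)$. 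A dimension count using Kaplansky's parallelogram identity $R^{(j)} - R^{(j-1)} \sim P_j - (P_j \wedge R^{(j-1)})$ tells me exactly how many new vectors are needed at each step.

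The main obstacle is that the space $\Ima(P_j) \cap \Ima(R^{(j-1)})^{\perp}$ may have strictly smaller dimension than the required increment, so a naive Gram--Schmidt step would push the new vectors outside $\Ima(P_j)$. Overcoming this requires either a careful choice of the enumeration $P_1,\dots,P_n$ or an exchange-type argument in which some previously selected basis vectors are replaced by different ones from $\Ima(P_i)$ so as to free up the required orthogonal dimension inside $\Ima(P_j)$. Controlling this exchange while preserving orthogonality, the containment $v_k \in \Ima(P_{i(k)})$, and the total dimension count is the delicate part of the argument; once it is in place, setting $Q_k := v_k v_k^*$ immediately finishes the proof.
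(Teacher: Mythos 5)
Your reduction to a single matrix factor is fine and matches the paper, but the core of your plan --- producing an \emph{orthonormal} basis $(v_k)$ of $\Ima(R)$ with each $v_k$ lying in some $\Ima(P_{i(k)})$ and setting $Q_k = v_kv_k^*$ --- cannot work. The obstacle you flag at the end is not a delicate point to be fixed by an exchange argument; it is a genuine obstruction. Take $M = M_2(\C)$, let $P_1$ be the orthogonal projection onto $\C e_1$ and $P_2$ the orthogonal projection onto $\C(e_1+e_2)$, so that $R = \Id$. An orthonormal basis of $\C^2$ adapted to $P_1,P_2$ would consist of two mutually orthogonal unit vectors, each proportional to $e_1$ or to $e_1+e_2$; since these two vectors are neither orthogonal nor proportional, no such basis exists. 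Equivalently, $\Id$ is not a sum of two rank-one \emph{self-adjoint} projections with ranges inside $\Ima(P_1)$ and $\Ima(P_2)$. So the statement you are actually trying to prove (with the $Q_k$ mutually orthogonal self-adjoint projections) is strictly stronger than the lemma and is false; no choice of enumeration or exchange of basis vectors can repair it.

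The paper's proof sidesteps this by giving up self-adjointness and orthogonality of the $Q_k$. One picks an arbitrary, generally non-orthogonal, basis $(e_l)_{1\leq l\leq s}$ of $\Ima(R)$ with each $e_l$ in some $\Ima(P_i)$ (such a basis always exists because $\Ima(R)$ is spanned by $\bigcup_i \Ima(P_i)$), completes it by an orthogonal basis $(f_t)$ of $\Ima(R)^{\perp}$, and sets $Q_k = B^{-1}E_kB$ where $B$ is the change-of-basis matrix. These $Q_k$ are rank-one idempotents (minimal projections in the algebraic sense, not self-adjoint in general) satisfying $Q_kQ_l = 0$ for $k\neq l$, $\sum_k Q_k = R$ (it is the idempotent with image $\Ima(R)$ and kernel $\Ima(R)^{\perp}$, hence the orthogonal projection $R$), and $\Ima(Q_k) = \C e_k \subset \Ima(P_{i(k)})$ --- which is all that is needed in the later applications, e.g.\ to conclude $T_qQ_k = Q_k$. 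If you want to keep your inductive construction, the fix is the same: at each step adjoin vectors of $\Ima(P_j)$ completing a basis of $\Ima(R^{(j-1)}) + \Ima(P_j)$ without insisting on orthogonality.
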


\begin{proof}
We first deal with the case $M = M_{n}(\C)$. We know that $R$ is the orthogonal projection onto the linear span of the images of the projections $P_{i}$. Thus, there is a basis $(e_{l})_{1\leqslant l\leqslant s}$ of $\Ima(R)$ such that for every $l$, there is an index $i$ such that $e_{l}\in \Ima(P_{i})$. Let $(f_{t})$ be an orthogonal basis of the orthogonal complement of $\Ima(R)$, so that $(e_{l}, f_{k})_{l, k}$ is a basis of $\C^{n}$ and let $B$ be the change-of-basis matrix from this basis to the canonical basis of $\C^{n}$. This means that
\begin{equation*}
R = B^{-1}\left(\sum_{1\leqslant k\leqslant s}E_{k}\right)B,
\end{equation*}
where the $(k, k)$-th coefficient of $E_{k}$ is $1$ and all the others are $0$. Setting $Q_{k} = B^{-1}E_{k}B$ for $1\leqslant k\leqslant s$, we get minimal projections summing up to $R$. Moreover, $\Ima(Q_{k}) = \C e_{k} \subset \Ima(P_{i})$ for some $i$.

Any finite-dimensional von Neumann algebra is isomorphic to a direct sum of matrix algebras, so let us write $M = \oplus_{t} M_{n_{t}}(\C)$. Let $P_{i}^{t}$ be the $t$-th component of $P_{i}$, which is again an orthogonal projection. Let $R^{t}$ be the supremum of the family of projections $(P_{i}^{t})$ for a fixed $t$. Then, $R$ is the supremum of the family of projections $R^{t}$. But these projections are pairwise orthogonal since the belong to different summands. Thus, $R = \oplus R^{t}$. Now, by the first part of the proof we know that each $R^{t}$ can be written as a sum of minimal projections $Q_{k}^{t}$ such that for every $k$ there is an index $i$ such that $\Ima(Q_{k}^{t}) \subset \Ima(P_{i}^{t}) \subset \Ima(P_{i})$. But now, $R$ is the sum of all the projections $Q_{k}^{t}$ and the proof is complete.
\end{proof}

\begin{prop}
Let $p$ be a projective partition and assume that $R_{p}\neq 0$. Then, $R_{p}$ can be written as a linear combination of operators $T_{r}$ with $t(r) < t(p)$.
\end{prop}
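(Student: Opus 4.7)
The plan is to apply Lemma \ref{LemvNProj} directly to the finite-dimensional von Neumann algebra $M = \Hom(u^{\otimes k}, u^{\otimes k})$ and the family of projections $\{T_q : q \in \Proj(k),\ q \prec p\}$, whose supremum is $R_p$ by Remark \ref{RemRp} (note that these $T_q$ are indeed projections in $M$ by the previous proposition). First I would invoke Lemma \ref{LemComparisonProjective}, which tells us that any projective partition $q$ with $q \prec p$ satisfies $t(q) < t(p)$.

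Next, Lemma \ref{LemvNProj} produces a decomposition $R_p = \sum_k Q_k$ where each $Q_k$ is a minimal projection in $M$ and $\Ima(Q_k) \subseteq \Ima(T_{q_k})$ for some $q_k \in \Proj(k)$ with $q_k \prec p$. Since both $Q_k$ and $T_{q_k}$ are orthogonal projections, this containment of ranges means exactly $Q_k \leqslant T_{q_k}$, and hence
\begin{equation*}
Q_k = T_{q_k} Q_k T_{q_k}.
\end{equation*}

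Now $Q_k$ belongs to $M = \Span\{T_r : r \in \CC(k,k)\}$, so I can write $Q_k = \sum_r \lambda_{k,r} T_r$ for some coefficients $\lambda_{k,r}$. Substituting and using point $(3)$ of the composition proposition, $T_{q_k} T_r T_{q_k}$ is a scalar multiple of $T_{q_k r q_k}$, which gives
\begin{equation*}
Q_k = \sum_r \lambda_{k,r}\, T_{q_k} T_r T_{q_k} = \sum_r \mu_{k,r}\, T_{q_k r q_k}
\end{equation*}
for suitable scalars $\mu_{k,r}$. By point $(2)$ of Remark \ref{RemCatOpAndBlockNumber}, $t(q_k r q_k) \leqslant t(q_k) < t(p)$. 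Thus each $Q_k$ is a linear combination of operators $T_s$ with $t(s) < t(p)$, and summing over $k$ yields the same conclusion for $R_p$.

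This argument is essentially mechanical once Lemma \ref{LemvNProj} is in hand; the only subtle point is recognizing that we must not just decompose $R_p$ in $M$ arbitrarily, but rather use a decomposition whose pieces are dominated by the $T_{q_k}$. Compressing by $T_{q_k}$ is what then forces the through-block count to drop strictly below $t(p)$, and this is the step whose necessity motivates the technical preparation in Lemma \ref{LemvNProj}.
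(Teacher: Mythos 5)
Your proof is correct and follows essentially the same route as the paper: both apply Lemma \ref{LemvNProj} to obtain minimal projections $Q_k$ dominated by some $T_{q_k}$ with $q_k \prec p$, then compress the expansion of $Q_k$ in the span of the $T_r$ by $T_{q_k}$ and use $t(q_k r q_k) \leqslant t(q_k) < t(p)$ (the paper compresses only on one side, replacing $r$ by $q_k r$, which is an immaterial difference). The only cosmetic quibble is that the identification of $R_p$ as $\bigvee_{q\prec p} T_q$ is the definition itself rather than the content of Remark \ref{RemRp}.
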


\begin{proof}
Let $R_{p} = \sum Q_{k}$ be the decomposition given by Lemma \ref{LemvNProj}. This means that for every $k$, there is a projective partition $q$ strictly dominated by $p$ such that $T_{q}Q_{k} = Q_{k}$. Now, each $Q_{k}$ can be written as a linear combination of operators $T_{r}$, and multiplying by the appropriate $T_{q}$ shows that we can replace $r$ by $qr$. Since $t(qr) \leqslant t(q) < t(p)$, the proof is complete.
\end{proof}

We end this subsection with a lemma which will be used repeatedly in the sequel.

\begin{lem}\label{LemGeneralt}
Let $p\in \Proj(k)$ and let $q\in \CC(k, k)$ be such that $t(pqp) < t(p)$. Then, $P_{p}T_{q}P_{p} = 0$.
\end{lem}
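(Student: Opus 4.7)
The plan is to reduce to a statement about the projective partition $(pqp)(pqp)^{*}$. Since $P_{p}\leq T_{p}$ as projections, we have $P_{p}T_{p}=T_{p}P_{p}=P_{p}$, so
\begin{equation*}
P_{p}T_{q}P_{p}=P_{p}T_{p}T_{q}T_{p}P_{p}=N^{-\gamma(p,q)-\gamma(pq,p)}\,P_{p}T_{pqp}P_{p}.
\end{equation*}
It therefore suffices to show $P_{p}T_{q'}P_{p}=0$ where $q':=pqp$. Note that $q'$ need not be projective, but since $p$ is projective ($p^{2}=p$, $p^{*}=p$) we have the partition identities
\begin{equation*}
pq'=p^{2}qp=pqp=q'\quad\text{and}\quad q'^{*}=pq^{*}p,\ pq'^{*}=q'^{*}.
\end{equation*}

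Next, I would exploit that $T_{q'}$ is a partial isometry. By the last proposition of the subsection on linear maps, $T_{q'}T_{q'}^{*}=T_{q'q'^{*}}$, and the standard partial-isometry identity $T_{q'}T_{q'}^{*}T_{q'}=T_{q'}$ gives $T_{q'q'^{*}}T_{q'}=T_{q'}$. Hence
\begin{equation*}
P_{p}T_{q'}P_{p}=P_{p}T_{q'q'^{*}}T_{q'}P_{p},
\end{equation*}
and the task reduces to proving $P_{p}T_{q'q'^{*}}=0$.

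The key step is to show that $q'q'^{*}$ is a projective partition strictly dominated by $p$. It is projective by Proposition \ref{PropProjectiveQstarQ}, and belongs to $\CC$ because $p,q\in\CC$. Using $pq'=q'$ (hence $pq'^{*}=q'^{*}$), we get
\begin{equation*}
p\,(q'q'^{*})=(pq')\,q'^{*}=q'q'^{*},
\end{equation*}
so $q'q'^{*}\preceq p$. By Lemma \ref{LemPPStarProj}, $t(q'q'^{*})=t(q')=t(pqp)<t(p)$, and therefore $q'q'^{*}\neq p$ by Lemma \ref{LemComparisonProjective}. Thus $q'q'^{*}$ is one of the projective partitions strictly dominated by $p$ that appear under the supremum defining $R_{p}$, so $T_{q'q'^{*}}\leq R_{p}$.

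To finish, recall that $R_{p}\leq T_{p}$ and $P_{p}=T_{p}-R_{p}$, which gives $P_{p}R_{p}=0$, and consequently $P_{p}T_{q'q'^{*}}=0$. Putting everything together yields $P_{p}T_{q'}P_{p}=0$ and hence $P_{p}T_{q}P_{p}=0$. The only delicate point in the argument is the bookkeeping of the scalar normalizations $\gamma(\,\cdot\,,\,\cdot\,)$ together with the verification that $pq'=q'$ holds at the level of partitions themselves (not merely up to a power of $N$), but both follow transparently from $p$ being projective and from the partial-isometry identities already established.
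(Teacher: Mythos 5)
Your proof is correct and follows essentially the same route as the paper: both arguments hinge on the observation that $T_{pqp}$ is a partial isometry whose source and range projections are projective partitions in $\CC$ strictly dominated by $p$ (via Lemmas \ref{LemPPStarProj} and \ref{LemComparisonProjective}), hence absorbed by $R_{p}$ and annihilated by $P_{p}$. The paper packages this as a four-term expansion of $(T_{p}-R_{p})T_{q}(T_{p}-R_{p})$ with $R_{p}V=VR_{p}=V$, while you factor through the range projection $T_{q'q'^{*}}$; the difference is purely cosmetic.
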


\begin{proof}
Set
\begin{equation*}
V = T_{p}T_{q}T_{p} \in \C  T_{pqp}.
\end{equation*}
Using the fact that $R_{p}T_{p} = R_{p} = T_{p}R_{p}$, we see that
\begin{equation}\label{EqSubprojection}
P_{p}T_{q}P_{p} = V - R_{p}V - VR_{p} + R_{p}VR_{p}.
\end{equation}
If $t(pqp) < t(p)$, then $T_{pqp}$ is a partial isometry associated to two strict subprojections of $T_{p}$, i.e. it is dominated by $R_{p}$ by Lemma \ref{LemComparisonProjective}. Hence, $R_{p}V = VR_{p} = V$, implying that the right-hand side of Equation (\ref{EqSubprojection}) is $0$.
\end{proof}

\subsection{Irreducibility}

The next step is to determine whether the representations $u_{p}$ are irreducible or not. This question can be answered affirmatively in the case of categories of noncrossing partitions in Section \ref{SecSpecial}, but the general case is quite complicated. We solve it by giving an indirect description of the space $\Aut(u_{p}) = \Hom(u_{p}, u_{p})$, i.e. we relate it to the irreducible representations of certain groups. To do so, we first introduce a useful tool. Recall that to any permutation $\sigma\in S_{k}$ a pair partition $r_{\sigma}\in P_{2}(k, k)$ is associated where the $i$-th point in the upper row is connected to the $\sigma(i)$-th point in the lower row.

\begin{de}\label{DefSymmetryGroup}
Let $p\in \CC$ be a projective partition, such that $t(p) > 0$, with through-block decomposition $p = p_{u}^{*}p_{u}$. For $\sigma\in S_{t(p)}$, set $p_{\sigma} = p_{u}^{*}r_{\sigma}p_{u}$. Then, its \emph{symmetry group} (relative to $\CC$) is the set
\begin{equation*}
\Sym(p) = \{\sigma\in S_{t(p)}| p_{\sigma}\in \CC(k, k)\}.
\end{equation*}
\end{de}

\begin{rem}
If $t(p) = 0$, then $u_{p}$ is one-dimensional, hence irreducible and there is nothing to do.
\end{rem}

\begin{rem}\label{RemNCTrivialSym}
The only noncrossing through-partition $r\in P_{2}(k, k)$ is $r = \idpart^{\otimes k}$. Thus, $\Sym(p)$ is trivial whenever $\CC$ is a category of noncrossing partitions.
\end{rem}

\begin{ex}
As an example, let us compute the symmetry group of $p = \idpart^{\otimes k}$ for $k \geqslant 2$ (the computation for $k = 1$ being trivial). If $\CC$ contains $\crosspart\in P(2, 2)$ (i.e. if $\G$ is a classical group), then $\Sym(p)$ contains all transpositions, hence is equal to $S_{k}$. If $\CC$ contains the half-liberating partition $\halflibpart\in P(3,3)$ but not the crossing $\crosspart$ (i.e. if $\G$ is \emph{half-liberated}), then $\Sym(p)$ contains two commuting subgroups which are the permutation groups respectively of odd and even indices. Thus, if $k = 2k'$ is even then $\Sym(p) = S_{k}\times S_{k'}$ and if $k = 2k'+1$ then $\Sym(p) = S_{k'+1}\times S_{k'}$. 
\end{ex}

As the name indicates, this set is in fact a subgroup of $S_{t(p)}$. Let us prove this.

\begin{prop}
The set $\Sym(p)$ is a subgroup of $S_{t(p)}$ for every projective partition $p$.
\end{prop}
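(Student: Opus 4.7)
The plan is to verify directly the three subgroup axioms (containing the identity, closure under composition, and closure under inversion), using only the identity $p_{u}p_{u}^{*} = \idpart^{\otimes t(p)}$ from Lemma \ref{LemPPStar} together with the fact that $\CC$ is closed under the category operations.

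For the identity, note that $r_{\id} = \idpart^{\otimes t(p)}$, so $p_{\id} = p_{u}^{*}\idpart^{\otimes t(p)}p_{u} = p_{u}^{*}p_{u} = p$, which belongs to $\CC$ by assumption; hence $\id\in\Sym(p)$.

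For closure under composition, I would take $\sigma,\tau\in\Sym(p)$ and compute the composition $p_{\sigma}p_{\tau}$ by cancelling the adjacent pair $p_{u}p_{u}^{*}$ in the middle:
\begin{equation*}
p_{\sigma}p_{\tau} = (p_{u}^{*}r_{\sigma}p_{u})(p_{u}^{*}r_{\tau}p_{u}) = p_{u}^{*}r_{\sigma}(p_{u}p_{u}^{*})r_{\tau}p_{u} = p_{u}^{*}(r_{\sigma}r_{\tau})p_{u}.
\end{equation*}
Under the bijection between through-partitions in $P_{2}(t(p),t(p))$ and permutations in $S_{t(p)}$ (Remark after the definition of through-partition), the composition of through-partitions corresponds to the composition of permutations, so $r_{\sigma}r_{\tau} = r_{\sigma\tau}$. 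Therefore $p_{\sigma}p_{\tau} = p_{\sigma\tau}$, and since $\CC$ is closed under composition, $p_{\sigma\tau}\in\CC$, i.e.\ $\sigma\tau\in\Sym(p)$.

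For closure under inversion, I would use that involution of a through-partition $r_{\sigma}$ (turning it upside down) exchanges its role of upper and lower points and therefore gives $r_{\sigma^{-1}}$. Consequently $p_{\sigma}^{*} = (p_{u}^{*}r_{\sigma}p_{u})^{*} = p_{u}^{*}r_{\sigma}^{*}p_{u} = p_{u}^{*}r_{\sigma^{-1}}p_{u} = p_{\sigma^{-1}}$. Since $\CC$ is closed under involution, $\sigma\in\Sym(p)$ implies $p_{\sigma^{-1}} = p_{\sigma}^{*}\in\CC$, so $\sigma^{-1}\in\Sym(p)$.

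I do not expect any real obstacle here: the only nontrivial ingredient is the cancellation $p_{u}p_{u}^{*} = \idpart^{\otimes t(p)}$, which is precisely point $(2)$ of Lemma \ref{LemPPStar}, and the observation that the correspondence $\sigma\mapsto r_{\sigma}$ is a $*$-homomorphism from $S_{t(p)}$ to the monoid of through-partitions (modulo loops, which do not arise here because $\rl(r_{\sigma},r_{\tau}) = 0$ for through-partitions).
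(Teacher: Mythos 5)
Your proof is correct and follows essentially the same route as the paper's: both arguments rest on the cancellation $p_{u}p_{u}^{*} = \idpart^{\otimes t(p)}$ from Lemma \ref{LemPPStar} to obtain $p_{\sigma}p_{\sigma'} = p_{\sigma\sigma'}$ and $p_{\sigma}^{*} = p_{\sigma^{-1}}$, together with $p_{\id} = p$. You merely spell out the intermediate steps (the correspondence $r_{\sigma}r_{\tau} = r_{\sigma\tau}$ and the behaviour of involution on through-partitions) that the paper leaves implicit.
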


\begin{proof}
Thanks to the equality $p_{u}p_{u}^{*} = \idpart^{\otimes t(p)}$ from Lemma \ref{LemPPStar}, we get, for $\sigma, \sigma'\in \Sym(p)$,
\begin{equation*}
p_{\sigma\sigma'} = p_{\sigma}p_{\sigma'} \in \CC
\end{equation*}
and
\begin{equation*}
p_{\sigma^{-1}} = p_{\sigma}^{*} \in \CC.
\end{equation*}
Observing that $p_{id} = p$ concludes the proof.
\end{proof}

\begin{rem}
The map $\sigma \mapsto \sigma^{-1}$ gives a bijection between $\Sym(p)$ and $\Sym(p^{*})$. Note also that the natural inclusion $\Sym(p)\times \Sym(q) \rightarrow \Sym(p\otimes q)$ is not surjective in general (consider e.g. $p = q = \idpart$).
\end{rem}

To decompose $u_{p}$ into irreducible subrepresentations, we have to find minimal projections in the space $\Aut(u_{p})$. Note that by definition, $P_{p}T_{p_{\sigma}}P_{p}\in \Aut(u_{p})$ for every $\sigma\in \Sym(p)$. Our statement is that these maps generate the whole space of self-intertwiners. Before proving it, we need two lemmata.

\begin{lem}\label{LemSymP}
Let $p\in \Proj(k)$ and let $q\in \CC(k, k)$ be such that $t(pqp) = t(p)$. Then, there is a permutation $\sigma\in \Sym(p)$ such that $pqp = p_{\sigma}$.
\end{lem}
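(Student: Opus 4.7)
The plan is to reduce the statement to Lemma \ref{LemComparisonProjective2} applied to the partition $p' = pqp$. First I would check that $p'$ satisfies the hypotheses of that lemma: namely, $p$ is projective by assumption, $pp' = p(pqp) = pqp = p'$ and $p'p = (pqp)p = pqp = p'$ both follow from the idempotence $p^2 = p$, and by hypothesis $t(p') = t(p)$. Note also that $p' = pqp$ belongs to $\CC$ because $p, q \in \CC$ and $\CC$ is a category of partitions, so it is closed under composition.

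Then Lemma \ref{LemComparisonProjective2} gives a through-partition $r \in P_2(t(p), t(p))$ such that
\begin{equation*}
pqp = p_u^* \, r \, p_u,
\end{equation*}
where $p = p_u^* p_u$ is the through-block decomposition. By the remark that through-partitions in $P_2(t(p), t(p))$ correspond canonically to permutations in $S_{t(p)}$, there exists $\sigma \in S_{t(p)}$ with $r = r_\sigma$. Hence $pqp = p_u^* r_\sigma p_u = p_\sigma$ in the notation of Definition \ref{DefSymmetryGroup}.

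Finally I need to verify that $\sigma \in \Sym(p)$, which by definition means $p_\sigma \in \CC(k,k)$. But $p_\sigma = pqp$ and the right-hand side is manifestly in $\CC$ as already noted. This completes the argument.

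I do not anticipate any real obstacle: the whole proof is essentially a repackaging of the uniqueness of the through-block decomposition (through Lemma \ref{LemComparisonProjective2}) together with the bijection between through-partitions and permutations. The only thing to be careful about is checking the three equalities $pp' = p'$, $p'p = p'$, $t(p') = t(p)$ needed to invoke Lemma \ref{LemComparisonProjective2}, all of which are immediate from idempotence of $p$ and the hypothesis on through-blocks.
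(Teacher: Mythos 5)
Your proof is correct and follows exactly the same route as the paper: the paper also sets $r = pqp$, invokes Lemma \ref{LemComparisonProjective2} to write $r = p_u^* r_m p_u$, and concludes $r = p_\sigma$ for some $\sigma \in \Sym(p)$. Your version simply spells out the verification of the hypotheses of that lemma and the membership $pqp \in \CC$, which the paper leaves implicit.
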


\begin{proof}
Set $r = pqp$. Applying Lemma \ref{LemComparisonProjective2}, we get $r = p_{u}^{*}r_{m}p_{u}$, i.e. $r = p_{\sigma}$ for some $\sigma\in \Sym(p)$.
\end{proof}

\begin{lem}\label{LemMultiplicativePsi}
Let $p$ be a projective partition and let $\sigma, \sigma'\in \Sym(p)$. Then,
\begin{equation*}
(P_{p}T_{p_{\sigma}}P_{p})(P_{p}T_{p_{\sigma'}}P_{p}) =  P_{p}T_{p_{\sigma\sigma'}}P_{p}
\end{equation*}
\end{lem}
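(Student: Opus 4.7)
The plan is to expand the middle factor $P_p = T_p - R_p$ in the product $P_p T_{p_\sigma} P_p \cdot P_p T_{p_{\sigma'}} P_p = P_p T_{p_\sigma} P_p T_{p_{\sigma'}} P_p$, split it into two terms, show the first equals $P_p T_{p_{\sigma\sigma'}} P_p$ via a group-law identity $T_{p_\sigma} T_{p_{\sigma'}} = T_{p_{\sigma\sigma'}}$, and kill the second using Lemma \ref{LemGeneralt}.

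First I would verify the multiplicativity $T_{p_\sigma} T_{p_{\sigma'}} = T_{p_{\sigma\sigma'}}$ at the level of the normalized operators. Using the through-block decomposition $p = p_u^* p_u$, I factor $T_{p_\sigma} = T_{p_u^*} T_{r_\sigma} T_{p_u}$; a direct bookkeeping of $\gamma$-factors (they vanish because each of the two compositions produces no loops and no change in the non-through-block count) shows no $N$-factor is needed. The identity then reduces to $T_{p_u} T_{p_u^*} = \Id$ on $(\C^N)^{\otimes t(p)}$ (from $p_u p_u^* = \idpart^{\otimes t(p)}$ in Lemma \ref{LemPPStar}(2), combined with $\gamma(p_u, p_u^*) = 0$ from Lemma \ref{LemGamma}(1)) and the obvious permutation composition $T_{r_\sigma} T_{r_{\sigma'}} = T_{r_{\sigma\sigma'}}$. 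The same factorization also yields $T_p T_{p_\sigma} = T_{p_\sigma} = T_{p_\sigma} T_p$, which lets me freely absorb a $T_p$ sitting next to any $T_{p_\sigma}$.

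Having $P_p^2 = P_p$, I then expand
\[
P_p T_{p_\sigma} P_p \cdot P_p T_{p_{\sigma'}} P_p = P_p T_{p_\sigma}(T_p - R_p) T_{p_{\sigma'}} P_p = P_p T_{p_\sigma} T_{p_{\sigma'}} P_p - P_p T_{p_\sigma} R_p T_{p_{\sigma'}} P_p,
\]
where in the first summand I absorbed the $T_p$ into $T_{p_\sigma}$ using $T_{p_\sigma} T_p = T_{p_\sigma}$. By multiplicativity that first summand is $P_p T_{p_{\sigma\sigma'}} P_p$, and so the lemma will follow once the second summand is shown to vanish.

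The main obstacle lies here. I invoke the refinement proved just before the lemma: $R_p$ may be written as $\sum_i \lambda_i T_{r_i}$ with $r_i \in \CC(k,k)$ and $t(r_i) < t(p)$. Each summand is then a nonzero scalar multiple of $P_p T_{p_\sigma r_i p_{\sigma'}} P_p$ (via Proposition \ref{PropTpRules}; the exact scalar, a power of $N$, is irrelevant as we only care about vanishing). Using $p p_\sigma = p_\sigma$ and $p_{\sigma'} p = p_{\sigma'}$ -- both immediate from $p_u p_u^* = \idpart^{\otimes t(p)}$ -- one has $p \cdot p_\sigma r_i p_{\sigma'} \cdot p = p_\sigma r_i p_{\sigma'}$, and the submultiplicativity $t(ab) \le \min(t(a), t(b))$ from Remark \ref{RemCatOpAndBlockNumber} forces $t(p_\sigma r_i p_{\sigma'}) \le t(r_i) < t(p)$. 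Lemma \ref{LemGeneralt} then gives $P_p T_{p_\sigma r_i p_{\sigma'}} P_p = 0$, finishing the proof. The only bookkeeping to double-check is that $p_\sigma r_i p_{\sigma'}$ lies in $\CC(k,k)$, which is automatic: $p_\sigma, p_{\sigma'} \in \CC$ by definition of $\Sym(p)$, and $r_i \in \CC$ since the supremum defining $R_p$ lies in the linear span of the $T_r$ with $r \in \CC$.
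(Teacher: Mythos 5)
Your proof is correct and follows essentially the same route as the paper's: expand the middle $P_p = T_p - R_p$, identify the $T_p$-term with $P_pT_{p_{\sigma\sigma'}}P_p$ by checking the $\gamma$-correction vanishes, and kill the $R_p$-term by writing $R_p$ as a combination of $T_{r_i}$ with $t(r_i)<t(p)$ and applying Lemma \ref{LemGeneralt}. The only cosmetic difference is that you verify $T_{p_\sigma}T_{p_{\sigma'}}=T_{p_{\sigma\sigma'}}$ via the factorization through $T_{p_u}$, whereas the paper computes $\gamma(p_\sigma,p_{\sigma'})=\gamma(p,p)=0$ directly.
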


\begin{proof}
Let $V$ denote the left-hand side of the equation, we then have
\begin{eqnarray*}
V & = & P_{p}T_{p_{\sigma}}P_{p}T_{p_{\sigma'}}P_{p} \\
& = & P_{p}T_{p_{\sigma}}T_{p}T_{p_{\sigma'}}P_{p} - P_{p}T_{p_{\sigma}}R_{p}T_{p_{\sigma'}}P_{p} \\
& = & P_{p}T_{p_{\sigma}p_{\sigma'}}P_{p} - P_{p}T_{p_{\sigma}}R_{p}T_{p_{\sigma'}}P_{p}
\end{eqnarray*}
Here, we used that $\gamma(p_\pi,p_{\pi'})=\gamma(p,p)=0$ for all permutations $\pi,\pi'\in\Sym(p)$ (note that $\beta(p_\pi)=\beta(p)$ and $\rl(p_\pi,p_{\pi'})=\rl(p,p)$).
Now, $T_{p_{\sigma}}R_{p}T_{p_{\sigma'}}$ is a linear combination of operators $T_{pqp}$, where $t(pqp)\leqslant t(q) < t(p)$, hence
\begin{equation*}
P_{p}T_{p_{\sigma}}R_{p}T_{p_{\sigma'}}P_{p} = 0
\end{equation*}
by Lemma \ref{LemGeneralt} and the proof is complete.
\end{proof}

The next result will be stated in the language of group algebras: If $\Gamma$ is a finite group, its \emph{group algebra} is the vector space $\C[\Gamma]$ generated by elements $\delta_{\gamma}$ for $\gamma\in \Gamma$ with the algebra structure induced by the group law.

\begin{prop}\label{PropSymP}
For every projective partition $p$, the map
\begin{equation*}
\Psi: \left\{\begin{array}{ccc}
\C[\Sym(p)] & \rightarrow & \Aut(u_{p}) \\
\delta_{\sigma} & \mapsto & P_{p}T_{p_{\sigma}}P_{p}
\end{array}\right.
\end{equation*}
extends to a surjective $*$-homomorphism. If moreover the maps $T_{q}$ are linearly independent for $q\in \CC(t(p), t(p))$ and if $P_{p}\neq 0$, then $\Psi$ is an isomorphism.
\end{prop}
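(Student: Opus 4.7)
The plan is to verify the stated properties in turn. Linearity of $\Psi$ is automatic since $\{\delta_\sigma\}_{\sigma\in \Sym(p)}$ is a basis of $\C[\Sym(p)]$, and multiplicativity on basis elements is exactly Lemma \ref{LemMultiplicativePsi}. For the involution, one computes
\[
p_\sigma^* = (p_u^* r_\sigma p_u)^* = p_u^* r_\sigma^* p_u = p_u^* r_{\sigma^{-1}} p_u = p_{\sigma^{-1}},
\]
and since $P_p$ is self-adjoint, $\Psi(\delta_\sigma)^* = P_p T_{p_{\sigma^{-1}}} P_p = \Psi(\delta_{\sigma^{-1}}) = \Psi(\delta_\sigma^*)$. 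Thus $\Psi$ extends to a $*$-homomorphism.

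Surjectivity reduces to the two lemmas just proved. Since $\Aut(u_p) = P_p \Hom(u^{\otimes k}, u^{\otimes k}) P_p$ is the linear span of the operators $P_p T_q P_p$ with $q \in \CC(k, k)$, I would split on the value of $t(pqp)$. If $t(pqp) < t(p)$, Lemma \ref{LemGeneralt} gives $P_p T_q P_p = 0$. If $t(pqp) = t(p)$, Lemma \ref{LemSymP} produces $\sigma \in \Sym(p)$ with $pqp = p_\sigma$; combined with $T_p P_p = P_p = P_p T_p$ (which follows from $R_p \leq T_p$) and the composition rule $T_p T_q T_p = c\cdot T_{pqp}$ for a nonzero scalar $c$, this identifies $P_p T_q P_p$ with a scalar multiple of $\Psi(\delta_\sigma)$. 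Hence every element of $\Aut(u_p)$ lies in the image of $\Psi$.

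For injectivity, the central observation is that $T_{p_\sigma}$ is a partial isometry whose initial and final projection are both $T_p$. Indeed,
\[
p_\sigma^* p_\sigma = p_u^* r_{\sigma^{-1}} (p_u p_u^*) r_\sigma p_u = p_u^* r_{\sigma^{-1}} r_\sigma p_u = p
\]
by Lemma \ref{LemPPStar}, and symmetrically $p_\sigma p_\sigma^* = p$; so $T_{p_\sigma}^* T_{p_\sigma} = T_{p_\sigma} T_{p_\sigma}^* = T_p$, forcing $T_p T_{p_\sigma} = T_{p_\sigma} = T_{p_\sigma} T_p$. Expanding $P_p = T_p - R_p$ then gives
\[
P_p T_{p_\sigma} P_p = T_{p_\sigma} - T_{p_\sigma} R_p - R_p T_{p_\sigma} + R_p T_{p_\sigma} R_p.
\]
By the proposition immediately preceding the statement, $R_p$ is a linear combination of maps $T_r$ with $t(r) < t(p)$, so all three correction terms expand as linear combinations of $T_q$ with $q \in \CC(k, k)$ and $t(q) < t(p)$. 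Consequently $\Psi(\sum_\sigma \lambda_\sigma \delta_\sigma) = \sum_\sigma \lambda_\sigma T_{p_\sigma} + X$, with $X$ supported on strictly lower through-block count. The map $\sigma \mapsto p_\sigma$ is injective (since $r_\sigma = p_u p_\sigma p_u^*$), each $p_\sigma$ sits in $\CC(k, k)$ with $t(p_\sigma) = t(p)$, and $P_p \neq 0$ excludes the degenerate case in which $\Psi$ would vanish even on the identity. The main obstacle is precisely this final step: one must separate the top-stratum contribution $\sum \lambda_\sigma T_{p_\sigma}$ from the lower-stratum remainder $X$, and this is exactly what the assumed linear independence of the $T_q$ delivers — after, if necessary, sandwiching with $T_{p_u}$ and $T_{p_u^*}$ to pass from $\CC(k, k)$ to $\CC(t(p), t(p))$ so that the hypothesis applies verbatim.
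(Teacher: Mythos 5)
Your proof is correct and follows essentially the same route as the paper: multiplicativity from Lemma \ref{LemMultiplicativePsi}, surjectivity by splitting on the value of $t(pqp)$ via Lemmas \ref{LemGeneralt} and \ref{LemSymP}, and injectivity by expanding $P_pT_{p_\sigma}P_p = T_{p_\sigma} + (\text{terms with fewer through-blocks})$ and invoking linear independence to isolate the top stratum. The only divergence is your closing ``sandwich'' aside, which is not needed (and would not literally land in $\CC(t(p),t(p))$, since $p_u$ need not belong to $\CC$); the paper simply applies linear independence of the maps $T_q$ directly at level $k$ inside $\Hom(u^{\otimes k},u^{\otimes k})$.
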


\begin{proof}
First note that $\Psi$ is multiplicative by Lemma \ref{LemMultiplicativePsi}. By definition, $\Aut(u_{p})$ is the linear span of operators $P_{p}T_{q}P_{p}$ for $q\in \CC(k, k)$. According to Lemma \ref{LemGeneralt}, such operators are zero if $t(pqp) < t(p)$. Assume that $t(pqp) = t(p)$. By Lemma \ref{LemSymP}, $pqp = p_{\sigma}$ for some $\sigma$ in $\Sym(p)$. Moreover,
\begin{equation*}
P_{p}T_{q}P_{p} = P_{p}T_{p}T_{q}T_{p}P_{p} \in \C  P_{p}T_{pqp}P_{p} = \C \Psi(\delta_{\sigma}).
\end{equation*}
This proves the surjectivity of the map $\Psi$.

To prove injectivity, let $\lambda_{i}$ be scalars and let $\sigma_{i}$ be permutations in $\Sym(p)$ such that
\begin{equation}\label{EqPropSymP}
\sum_{i} \lambda_{i}\Psi(\delta_{\sigma_{i}}) = 0.
\end{equation}
We know that
\begin{equation*}
\Psi(\delta_{\sigma_{i}}) = T_{p_{\sigma_{i}}} - R_{p}T_{p_{\sigma_{i}}} - T_{p_{\sigma_{i}}}R_{p} + R_{p}T_{p_{\sigma_{i}}}R_{p}.
\end{equation*}
Noticing that the coefficient of $T_{p_{\sigma_{i}}}$ in Equation (\ref{EqPropSymP}) is $\lambda_{i}$, we get $\lambda_{i} = 0$ for every $i$ by linear independence.
\end{proof}

We take the occasion of settling the question of linear independence. The following result is well-known but we include an argument since we could not find a proof in the literature.

\begin{lem}\label{lem:linearindependence}
Let $N$ be an integer and let $\CC$ be a category of \emph{noncrossing} partitions. Then, the maps $(T_{p})_{p\in \CC(k, l)}$ are linearly independent for every $k$ and $l$ if and only if $N\geqslant 4$.
\end{lem}

\begin{proof}
It is enough to prove it for $\CC = NC$ and, up to rotating the partitions, for the maps $(T_{p})_{p\in NC(0, k)}$ and even for the maps $(\mathring{T}_{p})_{p\in NC(0, k)}$. In that case, consider
\begin{equation*}
\mathring{T}_{p} = \C \rightarrow (\C^{N})^{\otimes k}
\end{equation*}
as a vector (identifying it with $\mathring{T}_{p}(1)$) and form the Gram matrix $M(k, N)\in M_{\vert NC(0, k)\vert}(\C)$ of this family. Since
\begin{equation*}
\langle \mathring{T}_{p}, \mathring{T}_{q}\rangle = \mathring{T}_{p}^{*}\mathring{T}_{q} = N^{\rl(p, q)},
\end{equation*}
$M(k, N)$ is exactly the \emph{matrix of chromatic joins} introduced by W.T. Tutte in \cite{tutte1993matrix}, where he also computed its determinant. According to \cite[Equation 16]{copeland2002two}, this determinant can be written as
\begin{equation*}
\det(M(k, N)) = N^{C_{k}}\prod_{i=2}^{k}\left(\frac{U_{i}(\sqrt{N})}{U_{i-2}(\sqrt{N})}\right)^{b(k, i)},
\end{equation*}
where $C_{k}$ and $b(k, i)$ are integers and $U_{i}$ is the $i$-th \emph{dilated Chebyshev polynomial of the second kind} satisfying $U_{0} = 1$, $U_{1} = X$ and $XU_{k} = U_{k+1} + U_{k-1}$. The above equation tells us that the vectors $(\mathring{T}_{p})_{p\in NC(0, k)}$ are linearly independent if and only if $N$ is not the square of a root of a Chebyshev polynomial $U_{i}$. Since roots of $U_{i}$ all have absolute value strictly smaller than $2$, the result follows (it is also easy to see that $1$, $\sqrt{2}$ and $\sqrt{3}$ are roots of $U_{2}$, $U_{3}$ and $U_{5}$ respectively).
\end{proof}

Note that in contrast to the noncrossing case, even for $P_{2}$ the maps $(T_{p})_{p\in P_{2}(0, k)}$ cannot be linearly independent for all $k$. In fact, the cardinality of $P_{2}(0, k)$ grows much faster than $k\times N$, which is the dimension of $(\C^{N})^{\otimes k}$.

Even if $\Psi$ is not an isomorphism, we can determine its kernel. Let us recall some facts about the representation theory of finite groups. If $\Gamma$ is a finite group, the group algebra $\C[\Gamma]$ admits a direct sum decomposition
\begin{equation*}
\C[\Gamma] = \bigoplus_{\alpha\in \Irr(\Gamma)} M_{\dim(\alpha)}(\C).
\end{equation*}
The map $\Psi$ being a $*$-homomorphism, its kernel is an ideal in the finite-dimensional C*-algebra $\C[\Gamma]$, hence it is equal to
\begin{equation*}
\bigoplus_{\alpha\in J(p)} M_{\dim(\alpha)}(\C)
\end{equation*}
for some subset $J(p)$ of $\Irr(\Gamma)$. Thus, if $I(p)$ denotes the (set-theoretic) complement of $J(p)$, then $\Psi$ yields an isomorphism from the sum of matrix algebras indexed by $I(p)$ onto $\Aut(u_{p})$.

\subsection{Unitary equivalence}

Before dealing with the decomposition of $u^{\otimes k}$ let us decide whether two given projective partitions yield unitarily equivalent representations. Let us first restate the problem. Let $p\in \Proj(k)$ and $q\in \Proj(l)$ be projective partitions. A morphism $W$ between the representations $u_{p}$ and $u_{q}$ can always be extended by $0$ to give an element $V\in \Hom(u^{\otimes k}, u^{\otimes l})$ satisfying
\begin{equation*}
VP_{p} = V=P_{q}V.
\end{equation*}
Reciprocally, if $V$ is such a morphism, then $W = P_{q}VP_{p}$ can be seen as an element in $\Hom(u_{p}, u_{q})$. Moreover, $W$ is unitary in $\Hom(u_{p}, u_{q})$ if and only if $V$ is a partial isometry with range and support projections respectively $P_{q}$ and $P_{p}$, i.e.
\begin{equation*}
VV^{*} = P_{q} \text{ and } V^{*}V = P_{p}.
\end{equation*}
There is an obvious way to build such a partial isometry, if the projective partitions $p$ and $q$ are equivalent in the following sense.

\begin{de}
 Let $\CC$ be a category of partitions. Two projective partitions $p\in \Proj(k)$ and $q\in \Proj(l)$ are \emph{equivalent} in $\CC$ (we write $p\sim q$), if there is a partition $r\in\CC(k,l)$ such that $r^*r=p$ and $rr^*=q$.
\end{de}

The equivalence of projective partitions translates exactly to the equivalence of the associated representations. 

\begin{thm}\label{ThmUnitaryEquivalence}
Let $\CC$ be a category of partitions, let $N$ be an integer and let $\G$ be the associated compact quantum group. Let $p\in \Proj(k)$ and $q\in \Proj(l)$. Then, the representations $u_{p}$ and $u_{q}$ are unitarily equivalent if and only if either $p\sim q$ in $\CC$ or $u_{p} = u_{q} = 0$.
\end{thm}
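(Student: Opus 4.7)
I will prove the two implications separately. The direction $(\Leftarrow)$ is trivial when $u_p = u_q = 0$, so suppose $p \sim q$ via $r \in \CC(k, l)$ with $r^*r = p$ and $rr^* = q$, and set $W := P_q T_r P_p$. I claim $W^*W = P_p$ and $WW^* = P_q$; by interchanging $(p, q, r) \leftrightarrow (q, p, r^*)$ it is enough to prove the first. Expanding,
\begin{equation*}
W^*W = P_p T_{r^*}P_q T_r P_p = P_p T_{r^*} T_q T_r P_p - P_p T_{r^*} R_q T_r P_p.
\end{equation*}
Since $T_r^*T_r = T_p$ and $T_rT_r^* = T_q$ ($T_r$ is a partial isometry), $T_{r^*}T_qT_r = T_r^*(T_rT_r^*)T_r = T_r^*T_r = T_p$, and $T_pP_p = P_p$ gives $P_p T_p P_p = P_p$. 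For the second term, the earlier proposition expresses $R_q$ as a linear combination of operators $T_{q'}$ with $t(q') < t(q) = t(p) =: m$, so $T_{r^*}R_qT_r$ is a linear combination of scalar multiples of $T_{r^*q'r}$, and $t(p\,r^*q'r\,p) \leq t(r^*q'r) \leq t(q') < m$ by Remark \ref{RemCatOpAndBlockNumber}; Lemma \ref{LemGeneralt} then kills each summand after sandwiching by $P_p$. Hence $W^*W = P_p$, and symmetrically $WW^* = P_q$. If $P_p = 0$, then $W = 0$ and $P_q = WW^* = 0$, so $u_p = u_q = 0$.

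For the converse, assume $u_p$ and $u_q$ are unitarily equivalent. If $u_p = 0$, the equivalent representation $u_q$ also has dimension $0$, giving $u_p = u_q = 0$; so we may assume $P_p \neq 0$ and aim to produce a witness $r \in \CC(k, l)$. Extending the unitary equivalence by zero yields $V \in \Hom(u^{\otimes k}, u^{\otimes l})$ with $V^*V = P_p$, $VV^* = P_q$ and $V = P_qVP_p \neq 0$. Since $T_pP_p = P_p$ and $R_pP_p = 0$, one has $VT_p = V$, and similarly $T_qV = V$. Write $V = \sum_{s \in \CC(k, l)} \lambda_s T_s$ and substitute into $V = T_qVT_p$; re-indexing by $t = qsp$ (which lies in $\CC$ because $\CC$ is closed under composition) produces an expansion $V = \sum_t \mu_t T_t$ in which every surviving index satisfies $qtp = t$.

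The heart of the argument is the claim: for every such $t$ with $t(t) < m$, we have $T_tP_p = 0$. Indeed, $t^*t$ is projective by Lemma \ref{LemPPStarProj}, and from $tp = t$ we deduce $p(t^*t) = (t^*t)p = t^*t$, whence $t^*t \preceq p$; combined with $t(t^*t) = t(t) < t(p)$, Lemma \ref{LemComparisonProjective} forces $t^*t \prec p$, so $T_{t^*t} \leq R_p$ and therefore $P_pT_{t^*t}P_p = 0$. This gives $(T_tP_p)^*(T_tP_p) = P_pT_{t^*t}P_p = 0$, hence $T_tP_p = 0$. Feeding this back into $V = VP_p = \sum_t \mu_t T_tP_p$, only terms with $t(t) = m$ can contribute, and since $V \neq 0$ at least one index $t \in \CC(k, l)$ with $qtp = t$, $t(t) = m$ and $\mu_t \neq 0$ survives. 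For that $t$, the same argument gives $t^*t \preceq p$, but now with equal through-block count $t(t^*t) = m = t(p)$, so Lemma \ref{LemComparisonProjective} forces $t^*t = p$; symmetrically $tt^* = q$, and $t$ realizes $p \sim q$ in $\CC$.

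The principal obstacle is this extraction step: a priori, linear dependencies among the $T_s$'s could cancel all contributions with $t(s) = m$ in the expansion of $V$, so a naive ``highest-through-block'' argument would not go through. The resolution is not to read off a leading coefficient directly, but to multiply by $P_p$ on the right: every partition with too few through-blocks is then killed by the projection-lattice argument, leaving at least one leading survivor whenever $V \neq 0$.
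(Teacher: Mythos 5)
Your proof is correct and follows essentially the same route as the paper's: the same partial isometry $P_qT_rP_p$ (killed on the $R_q$ term by the through-block count argument) for the forward direction, and for the converse the same extraction from an expansion $V=\sum_t \mu_t T_t$ with $qtp=t$ of a leading index with $t(t)=t(p)$ and $t^*t=p$. The only step to make explicit is that the concluding ``symmetrically $tt^*=q$'' needs $t(q)=t(p)$, which one gets by first running the same extraction on $V^*$ to obtain $t(q)\leq t(p)$ --- an ellipsis that the paper's own proof (``the same argument proves\dots'') shares.
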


\begin{proof}
Assume that $p\sim q$, i.e. there is a partition $r\in \CC(l, k)$ such that $r^{*}r = p$ and $rr^{*} = q$.
First note that $T_{r^{*}}R_{q}T_{r}$ is a linear combination of maps of the form $T_{r^{*}lr}$ for $l\prec q$. Thus, $t(r^{*}lr) \leqslant t(l) < t(q) = t(p)$ and by Lemma \ref{LemGeneralt}, $P_{p}T_{r^{*}}R_{q}T_{r}P_{p} = 0$. From this, we get, setting $V = P_{q}T_{r}P_{p}$,
\begin{eqnarray*}
V^{*}V & = & P_{p}T_{r^{*}}P_{q}T_{r}P_{p} \\
& = & P_{p}T_{r^{*}}T_{q}T_{r}P_{p} - P_{p}T_{r^{*}}R_{q}T_{r}P_{p} \\
& = & P_{p}T_{r^{*}qr}P_{p} \\
& = & P_{p}T_{p}P_{p} \\
& = & P_{p}.
\end{eqnarray*}
Note that we used here the fact that $\gamma(q, r) = \gamma (r^{*}, qr) = 0$, which comes from the equality $rr^{*} = q$ and Lemma \ref{LemGamma}. By the same computations, we get $VV^{*} = P_{q}$, thus $u_p$ and $u_q$ are unitarily equivalent.

Conversely, 
let $W$ be a unitary in $\Hom(u_{p}, u_{q})$ and extend it to a partial isometry in $V\in \Hom(u^{\otimes k}, u^{\otimes l})$ satisfying $P_{q}V =V= VP_{p}$. Now, $V$ can be written as a finite sum
\begin{equation*}
V = \sum_{i} \lambda_{i}T_{r_{i}}
\end{equation*}
for some complex coefficients $\lambda_{i}$ and some partitions $r_{i}\in \CC(k, l)$. Since also $T_qV=V=VT_p$, we may assume that all $r_i$ fulfill $r_i=qr_ip$.

 As soon as $P_{p} \neq 0$, we can assume that there is an index $i$ such that $T_{r_{i}}P_{p} \neq 0$ since $VP_{p} = V$. This implies that $P_{p}T_{r_{i}^{*}r_{i}}P_{p}$ is non-zero. By Lemma \ref{LemGeneralt}, we then have $t(r_i^*r_i)=t(pr_{i}^{*}r_{i}p) = t(p)$. Thus, by Lemma 
\ref{LemComparisonProjective2}, $r_{i}^{*}r_{i} = p_u^*rp_u$ for some through-partition $r\in P_2(t(p),t(p))$ and the decomposition $p=p_u^*p_u$. But $r_{i}^{*}r_{i}$ is projective by Lemma \ref{LemPPStarProj}, hence has trivial through-partition. This means that $r_{i}^{*}r_{i} = p$. The same argument proves that we can find an index $i$ such that also $r_{i}r_{i}^{*} = q$ is fulfilled, which yields $p\sim q$.
\end{proof}

Note in particular that if $p\sim q$, then $t(p) = t(q)$. However, the invariant $t$ does not completely characterize the equivalence class of $p$ in general.

\begin{lem}\label{LemEquivAndT}
 Let $\CC$ be a category of partitions and let $p\in\Proj(k), q\in\Proj(l)$.
\begin{enumerate}
 \item If $p\sim q$, then $t(p)=t(q)$.
 \item If $t(p)=t(q)=t(pq)$, then $p\sim q$.
\end{enumerate}
\end{lem}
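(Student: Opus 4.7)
The plan is to handle the two parts directly from the algebraic properties of projective partitions established earlier.

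For (1), if $p\sim q$ there exists $r\in\CC(k,l)$ with $r^{*}r=p$ and $rr^{*}=q$. By Lemma \ref{LemPPStarProj}, $t(r^{*}r)=t(rr^{*})=t(r)$, so $t(p)=t(r)=t(q)$.

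For (2), the natural candidate for a witness of $p\sim q$ is $r=qp\in\CC(k,k)$ (which forces $k=l$ so that $pq$ makes sense). Computing with $p=p^{*}=p^{2}$ and $q=q^{*}=q^{2}$, one gets $r^{*}r=pq^{2}p=pqp$ and $rr^{*}=qp^{2}q=qpq$. Thus the task reduces to proving the identities $pqp=p$ and $qpq=q$ under the hypothesis $t(p)=t(q)=t(pq)$.

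To establish $qpq=q$, note that $qpq=(pq)^{*}(pq)$, so by Lemma \ref{LemPPStarProj} it is a projective partition and $t(qpq)=t(pq)$, which by hypothesis equals $t(q)$. Moreover, $q\cdot qpq=qpq$ (since $q^{2}=q$), i.e.\ $qpq\preceq q$. Lemma \ref{LemComparisonProjective} then forces $qpq=q$. The symmetric argument applied to $(pq)(pq)^{*}=pqp$ yields $pqp=p$. Substituting back, $r^{*}r=p$ and $rr^{*}=q$, so $p\sim q$.

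The only real content is the second item, and its key point is recognizing that the numerical hypothesis $t(p)=t(q)=t(pq)$ is exactly what is needed to upgrade the obvious domination relations $pqp\preceq p$ and $qpq\preceq q$ into equalities via Lemma \ref{LemComparisonProjective}; everything else is a formal manipulation using $p^{2}=p$, $q^{2}=q$, and the definition of $\sim$.
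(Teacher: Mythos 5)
Your proof is correct and follows essentially the same route as the paper: part (1) via Lemma \ref{LemPPStarProj}, and part (2) by showing $pqp=p$ and $qpq=q$ through the identification of these with $(pq)(pq)^{*}$ and $(pq)^{*}(pq)$, the equality of through-block numbers, and Lemma \ref{LemComparisonProjective}. The only cosmetic difference is that you take $r=qp$ as the witness where the paper takes $r=pq$, which is immaterial since $\sim$ is symmetric.
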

\begin{proof}
 The first assertion follows directly from Lemma \ref{LemPPStarProj}. For the second, consider $q':=pqp$ and note that $t(q')=t(pq)$ (again by Lemma \ref{LemPPStarProj}). From Lemma \ref{LemComparisonProjective}, we deduce that $pqp=q'=p$. Likewise we prove $qpq=q$, which yields $p\sim q$ using $r:=pq$.
\end{proof}

\begin{rem}\label{RemHN+}
In the previous lemma, none of the converse directions hold. If one considers for instance the hyperoctahedral quantum group $H_{N}^{+}$, the projective partitions
\begin{equation*}
p = \vierpartrot\idpart \text{ and } q = \idpart\vierpartrot
\end{equation*}
correspond to irreducible subrepresentations of $u^{\otimes 3}$ which are \emph{not} unitarily equivalent, labelled respectively by $01$ and $10$ using the notations of \cite[Thm 7.3]{banica2009fusion}. Indeed, any partition $r=r_l^*r_mr_u$ such that $p=r^*r$ and $q=rr^*$ fulfills $r_u=p_u$ and $r_l=q_u$ by the uniqueness of the through-block decomposition of $p$ and $q$ respectively. Hence $r=q_u^*r_mp_u$ consists either of two three blocks or contains a crossing, depending whether $r_m=\crosspart$ or $r_m=\idpart^{\otimes 2}$. Therefore $r$ is not contained in the category $\langle\vierpart\rangle$ associated to $H_N^+$.

An example for $p\sim q$ (again in $\langle\vierpart\rangle$) but $t(p)\neq t(pq)$ is given by the partitions:
\[p=\doublepairrot\vierpartrot \quad \textnormal{and} \quad q=\vierpartrot\doublepairrot\]
\end{rem}

If $p\in \Proj(k)$ and $q \in \Proj(l)$ are two projective partitions such that $t(p) = t(q)$, we can give another characterization of the equivalence relation $\sim$ for $p$ and $q$. For any $\sigma\in S_{t(p)}$, set
\begin{equation*}
r^{p}_{q}(\sigma) = q_{u}^{*}r_{\sigma}p_{u} \in P(k, l)
\end{equation*}
and define
\begin{equation*}
\Sym(p, q) = \{\sigma\in S_{t(p)}, r^{p}_{q}(\sigma)\in \CC(k, k)\}.
\end{equation*}

We can now give a criterion for unitary equivalence of the representations.

\begin{prop}\label{PropUnitaryEquivalence}
Let $\CC$ be a category of partitions and let $p\in \Proj(k)$ and $q\in \Proj(l)$. Then, $p\sim q$ if and only if either $\Sym(p, q) \neq \emptyset$.
\end{prop}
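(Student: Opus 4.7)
The plan is to establish the substantive equivalence $p \sim q \Leftrightarrow \Sym(p,q) \neq \emptyset$ by direct computation with the through-block decomposition; the ``$u_p = u_q = 0$'' clause covers the degenerate branch and combines, via Theorem \ref{ThmUnitaryEquivalence}, into a unitary-equivalence criterion for $u_p$ and $u_q$. Note first that if $t(p) \neq t(q)$ then $\Sym(p,q)$ is empty by definition and Lemma \ref{LemEquivAndT}(1) precludes $p \sim q$, so I may freely assume $t(p) = t(q)$ throughout.

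For $p \sim q \Rightarrow \Sym(p,q) \neq \emptyset$, I would start from a witness $r \in \CC(k,l)$ with $r^*r = p$ and $rr^* = q$ and unpack its through-block decomposition $r = r_l^* r_m r_u$ given by Proposition \ref{PropThroughBlockDecomp}. Applying the identities $r_l r_l^* = \idpart^{\otimes t(r)}$, $r_u r_u^* = \idpart^{\otimes t(r)}$, and $r_m^* r_m = r_m r_m^* = \idpart^{\otimes t(r)}$ from Lemma \ref{LemPPStar}, a direct composition gives
\[
r^*r = r_u^* r_m^* r_l r_l^* r_m r_u = r_u^* r_u \quad\text{and}\quad rr^* = r_l^* r_l.
\]
The uniqueness assertion of the through-block decomposition, applied to $p = p_u^* p_u$ and $q = q_u^* q_u$, then forces $r_u = p_u$, $r_l = q_u$, and in particular $t(r) = t(p) = t(q)$. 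Since $r_m \in P_2(t(p), t(p))$ is a through-partition it corresponds to some permutation $\sigma \in S_{t(p)}$, giving $r = q_u^* r_\sigma p_u = r^p_q(\sigma)$; as $r \in \CC$ by hypothesis, $\sigma \in \Sym(p,q)$.

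For the converse, I fix $\sigma \in \Sym(p,q)$ and set $r := r^p_q(\sigma) = q_u^* r_\sigma p_u$, which lies in $\CC(k,l)$ by definition of $\Sym(p,q)$. Using $q_u q_u^* = \idpart^{\otimes t(q)}$ and $r_\sigma^* r_\sigma = \idpart^{\otimes t(p)}$ from Lemma \ref{LemPPStar}, one computes
\[
r^*r = p_u^* r_\sigma^* q_u q_u^* r_\sigma p_u = p_u^* r_\sigma^* r_\sigma p_u = p_u^* p_u = p,
\]
and symmetrically $rr^* = q_u^* q_u = q$, establishing $p \sim q$.

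I do not anticipate a serious obstacle: both directions are essentially bookkeeping with the identities of Lemma \ref{LemPPStar}, and the real content is the invocation of uniqueness in Proposition \ref{PropThroughBlockDecomp} to identify the outer building blocks of $r$ with $p_u$ and $q_u$. The mildest subtlety is ensuring that $r_m$ ends up being a through-partition (so that it encodes an honest permutation), which is automatic because the decomposition of $r$ produces $r_m \in P_2(t(r),t(r))$ by construction.
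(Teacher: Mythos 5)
Your proof is correct and takes essentially the same route as the paper's: the forward direction is the direct verification that $r^{p}_{q}(\sigma)^{*}r^{p}_{q}(\sigma) = p$ and $r^{p}_{q}(\sigma)r^{p}_{q}(\sigma)^{*} = q$, and the converse identifies $r_{u} = p_{u}$ and $r_{l} = q_{u}$ via the uniqueness of the through-block decomposition, so that any witness $r$ is of the form $r^{p}_{q}(\sigma)$. You merely spell out the bookkeeping from Lemmas \ref{LemPPStar} and \ref{LemPPStarProj} that the paper's two-sentence proof leaves implicit.
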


\begin{proof}
Obviously, for $\sigma\in \Sym(p, q)$ we have $r^{p}_{q}(\sigma)(r^{p}_{q}(\sigma))^{*} = q$ and $(r^{p}_{q}(\sigma))^{*}r^{p}_{q}(\sigma) = p$. Reciprocally, if $r$ is a partition satisfying $r^{*}r = p$ and $rr^{*} = q$, then $r_{u} = p_{u}$ and $r_{l} = q_{u}$ by uniqueness of the through-block decomposition, i.e. $r$ is of the form $r^{p}_{q}(\sigma)$.
\end{proof}

\subsection{Decomposition of the fundamental representation}

We are now ready for the decomposition of $u^{\otimes k}$.
Our first result is that the representations $u_p$ give a complete decomposition of $u^{\otimes k}$.

\begin{prop}\label{ThmGeneralDecomposition}
Let $\CC$ be a category of partitions, let $N$ be an integer and let $\G$ be the associated easy quantum group. 
If $v$ is a subrepresentation of $u^{\otimes k}$ containing all the representations $u_{p}$, $p\in\Proj(k)$, then it is equal to $u^{\otimes k}$. In this sense we write:
\begin{equation*}
u^{\otimes k} = \sum_{p\in \Proj(k)} u_{p}
\end{equation*}
\end{prop}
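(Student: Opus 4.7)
The plan is to reformulate the statement as a claim about projections in the self-intertwiner space. A subrepresentation $v$ of $u^{\otimes k}$ corresponds to an orthogonal projection $Q\in\Hom(u^{\otimes k},u^{\otimes k})$, and the hypothesis that $v$ contains every $u_{p}$ translates precisely to $Q\geq P_{p}$ for all $p\in\Proj(k)$. Hence it is enough to prove
\[
\bigvee_{p\in\Proj(k)} P_{p}\;=\;\Id,
\]
the supremum being taken among projections in $\Hom(u^{\otimes k},u^{\otimes k})$; indeed this forces $Q\geq\Id$ and thus $v=u^{\otimes k}$.

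The key observation I would use is that $\idpart^{\otimes k}$ is itself a projective partition lying in $\CC$: the identity partition $\idpart$ belongs to every category of partitions by definition, and the tensor product is a category operation, so $\idpart^{\otimes k}\in\CC(k,k)$; it is clearly symmetric and idempotent. The associated operator $T_{\idpart^{\otimes k}}$ is exactly the identity on $(\C^{N})^{\otimes k}$. Furthermore, since $\idpart^{\otimes k}$ is a neutral element for composition, every $p\in\Proj(k)$ satisfies $\idpart^{\otimes k}\cdot p = p$, which means $p\preceq\idpart^{\otimes k}$. In other words, $\idpart^{\otimes k}$ is the maximum element of the poset $(\Proj(k),\preceq)$.

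From this point the argument is essentially formal. By Definition \ref{DeSubreprAssocToPartitions} and Remark \ref{RemRp},
\[
T_{\idpart^{\otimes k}} \;=\; P_{\idpart^{\otimes k}}+R_{\idpart^{\otimes k}} \;=\; P_{\idpart^{\otimes k}}+\bigvee_{q\prec\idpart^{\otimes k}} P_{q},
\]
and since $P_{\idpart^{\otimes k}}=\Id - R_{\idpart^{\otimes k}}$ is the orthogonal complement of $R_{\idpart^{\otimes k}}$ inside $\Id$, the two projections are mutually orthogonal, so their sum equals their supremum. Combining these observations gives
\[
\Id \;=\; T_{\idpart^{\otimes k}} \;=\; \bigvee_{q\preceq\idpart^{\otimes k}} P_{q} \;=\; \bigvee_{q\in\Proj(k)} P_{q},
\]
which is exactly the desired identity.

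I do not foresee a substantial obstacle for this statement: the entire content is to recognize that $\idpart^{\otimes k}$ is the maximum of $(\Proj(k),\preceq)$ and that $T_{\idpart^{\otimes k}}=\Id$, so that the telescoping decomposition implicit in the inductive definition of $P_{p}$, applied at the top element, already exhausts the identity. The more delicate questions about how the $u_{p}$ interact among themselves (irreducibility, unitary equivalence, mutual orthogonality) are the subject of the neighbouring results and are \emph{not} needed here.
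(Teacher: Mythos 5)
Your proof is correct and follows essentially the same route as the paper: both arguments reduce to the fact that $\idpart^{\otimes k}$ is the maximum of $(\Proj(k),\preceq)$ with $T_{\idpart^{\otimes k}}=\Id$, and both rest on the induction over $t(p)$ that turns the telescoping identity $T_{p}=P_{p}+R_{p}$ into the statement that every $T_{p}$ is dominated by the supremum of the $P_{q}$ (the paper carries out this induction explicitly inside the proof, while you delegate it to Remark \ref{RemRp}). No gap; this is the paper's argument in slightly repackaged form.
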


\begin{proof}
Let $P_v$ be the orthogonal projection associated to $v$, which hence dominates all projections $P_{p}$. We claim that $P_{v}$ in fact dominates $T_{p}$ for every projective partition $p\in \Proj(k)$. The proof will be done by induction on $t(p)$. Let $k_{1} < \dots < k_{n}$ be the possible values of $t$ on $\Proj(k)$.
\begin{itemize}
\item If $t(p) = k_{1}$, then any projective partition $q$ strictly dominated by $p$ satisfies $t(q) < t(p)$, which is impossible. Thus, $T_{p} = P_{p}$, which is dominated by $P_{v}$.
\item If $t(p) = k_{i}$ for $i>1$, then $R_{p}$ is the supremum of some operators which are all dominated by $P_{v}$ since they are associated to partitions $q$ with $t(q) < t(p) = k_{i}$ by Lemma \ref{LemGeneralt}. Hence $T_{p} = P_{p} + R_{p}$ is dominated by $P_{v}$.
\end{itemize}
Taking, $p$ to be the identity partition $\idpart^{\otimes k}$ (which is obviously projective), we see that $P_{v} = \Id$, i.e. $v = u^{\otimes k}$.
\end{proof}




Proposition \ref{ThmGeneralDecomposition} ensures, that we do not "miss" a subrepresentation of $u^{\otimes k}$ when considering the collection of $u_p, p\in\Proj(k)$. The drawback is that the representations $u_p$ do \emph{not} give a direct sum decomposition of $u^{\otimes k}$. The reason is that $P_p$ and $P_q$ are not orthogonal in general. For example, if $t(p) = t(q) = 0$, then $P_{p} = T_{p}$ and $P_{q} = T_{q}$ but $P_{p}P_{q} = \lambda T_{pq}$ cannot be $0$.

This can be solved by passing to equivalence classes of projective partitions. To a projective partition $p\in\Proj(k)$, we assign
\[P_{[p]} := \bigvee_{q\in \Proj(k),\; q \sim p} P_q\]
and we define the subrepresentation $u_{[p]}:=(\ii\otimes P_{[p]})(u^{\otimes k})$ of $u^{\otimes k}$.

\begin{prop}\label{ThmGeneralDecompositionEquivClasses}
Let $\CC$ be a category of partitions, let $N$ be an integer and let $\G$ be the associated easy quantum group. Then, we have the decomposition:
\begin{equation*}
u^{\otimes k} = \bigoplus_{[p]\in \Proj(k)/\sim} u_{[p]}
\end{equation*}
\end{prop}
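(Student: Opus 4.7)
The plan is to combine Proposition \ref{ThmGeneralDecomposition}, which already yields $\bigvee_{p \in \Proj(k)} P_{p} = \Id$ (and hence $\bigvee_{[p]} P_{[p]} = \Id$), with a pairwise orthogonality statement $P_{[p]} P_{[q]} = 0$ whenever $[p] \neq [q]$. Since the supremum of a family of projections, each orthogonal to a fixed projection $P$, is itself orthogonal to $P$, it suffices to establish orthogonality at the level of individual representatives, i.e. $P_{p}P_{q} = 0$ for all $p, q \in \Proj(k)$ with $p \not\sim q$ (the case where $u_{p}$ or $u_{q}$ vanishes being trivial).

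The core of the proof is therefore the claim $P_{p}P_{q} = 0$ for $p \not\sim q$. I plan to reduce this to $P_{p}P_{q}P_{p} = 0$ (or its symmetric version $P_{q}P_{p}P_{q} = 0$): since $(P_{p}P_{q})(P_{p}P_{q})^{*} = P_{p}P_{q}P_{p}$, vanishing of this positive operator forces $P_{p}P_{q} = 0$. To establish it, I will split cases on the values of $t(p)$ and $t(q)$. If $t(p) \neq t(q)$, say $t(p) < t(q)$, then $t(qpq) \leqslant t(p) < t(q)$, so Lemma \ref{LemGeneralt} yields $P_{q}T_{p}P_{q} = 0$; since $0 \leqslant P_{q}R_{p}P_{q} \leqslant P_{q}T_{p}P_{q}$ by $R_{p}\leqslant T_{p}$, this forces $P_{q}P_{p}P_{q} = P_{q}(T_{p}-R_{p})P_{q} = 0$. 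If instead $t(p) = t(q)$, the contrapositive of point $(2)$ of Lemma \ref{LemEquivAndT} gives $t(pq) < t(p)$, hence $t(pqp)\leqslant t(pq) < t(p)$, and the symmetric argument produces $P_{p}P_{q}P_{p} = 0$.

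Once pairwise orthogonality of the family $\{P_{[p]}\}_{[p] \in \Proj(k)/\sim}$ is known, the suprema collapse to sums, and we obtain $\sum_{[p]} P_{[p]} = \bigvee_{[p]} P_{[p]} = \Id$ from Proposition \ref{ThmGeneralDecomposition}, which is precisely the desired direct sum decomposition $u^{\otimes k} = \bigoplus_{[p]} u_{[p]}$.

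The main subtle point is the case $t(p) = t(q)$ with $p \not\sim q$, which requires invoking point $(2)$ of Lemma \ref{LemEquivAndT} to convert non-equivalence into the strict inequality $t(pq) < t(p)$ needed to apply Lemma \ref{LemGeneralt}. The passage from $P_{p}P_{q}P_{p} = 0$ to $P_{p}P_{q} = 0$ is a standard C*-algebraic observation, and lifting orthogonality from representatives to classes is automatic in the projection lattice, so no further obstacle is anticipated.
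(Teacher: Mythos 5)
Your proposal is correct and follows essentially the same route as the paper: reduce to Lemma \ref{LemEquivAndT} to get either $t(p)\neq t(q)$ or $t(pq)<t(p)=t(q)$, apply Lemma \ref{LemGeneralt} to kill $P_{\bullet}T_{\bullet}P_{\bullet}$, absorb the $R$-term, and combine the resulting orthogonality with Proposition \ref{ThmGeneralDecomposition}. The only (harmless) variation is that you dispose of the $R$-term via the operator inequality $0\leq R\leq T$ conjugated by a projection, where the paper reapplies Lemma \ref{LemGeneralt} to each $T_{r}$ with $r\prec q$; both work.
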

\begin{proof}
 Let $p,q\in\Proj(k)$ be non-equivalent projective partitions.  By Lemma \ref{LemEquivAndT}, we know that either $t(p)\neq t(q)$ or $t(p)\neq t(pq)$. In both cases, we obtain that $P_p$ and $P_q$ are orthogonal. Indeed, assume first $t(pq)< t(p)=t(q)$. By Lemma \ref{LemGeneralt}, we infer that $P_pT_qP_p=0$ and likewise $P_pT_rP_p=0$ for all projective partitions $r\prec q$. Hence $P_pR_qP_p=0$ and thus $P_pP_qP_p=0$. Similarly, if $t(q)<t(p)$, then also $t(pq)<t(p)$ and thus $P_pP_qP_p=0$. We conclude that $P_{[p]}$ and $P_{[q]}$ are orthogonal.
 We finish the proof using Proposition \ref{ThmGeneralDecomposition}. 
\end{proof}

This result has yet another drawback: The subrepresentations $u_{[p]}$ might be much larger than the representations $u_p$, and hence even further away from being irreducible. However, a combination of Proposition \ref{ThmGeneralDecomposition} together with the symmetry groups of Definition \ref{DefSymmetryGroup} finally yields a decomposition of $u^{\otimes k}$ into irreducible subrepresentations related to partitions.



\begin{thm}\label{PropGeneralDecompositionRefined}
Let $\CC$ be a category of partitions, let $N$ be an integer and let $\G$ be the associated easy quantum group. Then, the irreducible representations of $\G$ can be labelled as $u_p(\alpha)$ where $p$ is a projective partition and $\alpha$ is an irreducible representation of $\Sym(p)$.
Furthermore, there are integers $0 \leq \nu_{p}(\alpha) \leq \dim(\alpha)$ for every pair $(p,\alpha)$ such that:
\begin{equation*}
u^{\otimes k} = \bigoplus_{p\in \Proj_{k}(\CC)}\bigoplus_{\alpha\in \Irr(\Sym(p))}\nu_{p}(\alpha)u_{p}(\alpha)
\end{equation*}
\end{thm}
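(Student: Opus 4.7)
The strategy refines the direct sum $u^{\otimes k} = \bigoplus_{[p]} u_{[p]}$ of Proposition \ref{ThmGeneralDecompositionEquivClasses} by using the algebraic structure of $\Aut(u_p)$ encoded in Proposition \ref{PropSymP}. My plan has three stages.

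First, I would analyse $u_p$ for a fixed projective partition $p$ with $P_p \neq 0$. Proposition \ref{PropSymP} provides a surjective $*$-homomorphism $\Psi: \C[\Sym(p)] \twoheadrightarrow \Aut(u_p)$. Since $\C[\Sym(p)]$ is a semisimple $*$-algebra with Wedderburn decomposition $\bigoplus_{\alpha \in \Irr(\Sym(p))} M_{\dim(\alpha)}(\C)$, and since the kernel of $\Psi$ is a two-sided ideal, we obtain
\[ \Aut(u_p) \cong \bigoplus_{\alpha \in I(p)} M_{\dim(\alpha)}(\C) \]
for some subset $I(p) \subseteq \Irr(\Sym(p))$. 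Standard representation theory (the correspondence between subrepresentations and projections in the automorphism algebra) then identifies the minimal central projections of $\Aut(u_p)$ with the isotypic components of $u_p$ and the minimal projections in each block $M_{\dim(\alpha)}(\C)$ with equivalent irreducible subrepresentations. This yields an orthogonal decomposition
\[ u_p \cong \bigoplus_{\alpha \in I(p)} \dim(\alpha) \cdot u_p(\alpha), \]
with the $u_p(\alpha)$ pairwise inequivalent irreducibles of $\G$.

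Second, I would assemble these local decompositions using Proposition \ref{ThmGeneralDecompositionEquivClasses}. By Theorem \ref{ThmUnitaryEquivalence}, any two projective partitions in the same class $[p]$ yield unitarily equivalent representations, so the irreducibles extracted from different representatives can be canonically identified. Choosing one representative $p_0$ per equivalence class and setting $\nu_{p_0}(\alpha) = \dim(\alpha)$ for $\alpha \in I(p_0)$, together with $\nu_q(\alpha) = 0$ for every other label, delivers an honest orthogonal direct sum
\[ u^{\otimes k} = \bigoplus_{p \in \Proj_k(\CC)} \bigoplus_{\alpha \in \Irr(\Sym(p))} \nu_p(\alpha) \cdot u_p(\alpha). \]
The bound $\nu_p(\alpha) \leq \dim(\alpha)$ is then transparent, as it simply reflects the multiplicity of $u_p(\alpha)$ inside the single representation $u_p$. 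Every irreducible appearing in $u^{\otimes k}$ acquires (possibly several) labels of the form $u_p(\alpha)$, so the first assertion of the theorem follows from Peter--Weyl applied to the various $u^{\otimes k}$.

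The principal technical hurdle is the passage from the algebraic decomposition of $\Aut(u_p)$ to the actual decomposition of the representation $u_p$. One must verify that $\Psi$ indeed realises an isomorphism onto the image, that the resulting isotypic pieces are $\G$-stable, and that distinct $\alpha \in I(p)$ yield non-equivalent irreducibles of $\G$. A secondary subtlety is that the $P_q$ for $q \sim p$ are not pairwise orthogonal, so the direct sum must be indexed by equivalence classes rather than individual partitions; this is precisely what forces the artificial but convenient choice of $\nu_p(\alpha)$ supported on a single representative, ensuring that the formula is a genuine direct sum and not an overcount.
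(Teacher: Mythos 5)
Your first stage is sound and is exactly what the paper does: Proposition \ref{PropSymP} gives $\Aut(u_p)\cong\bigoplus_{\alpha\in I(p)}M_{\dim(\alpha)}(\C)$, whence $u_p\cong\bigoplus_{\alpha\in I(p)}\dim(\alpha)\,u_p(\alpha)$ with the $u_p(\alpha)$ irreducible. The gap is in your second stage. You set $\nu_{p_0}(\alpha)=\dim(\alpha)$ for a single representative $p_0$ of each class and $\nu_q(\alpha)=0$ for every other $q$, which amounts to claiming $u_{[p]}=u_{p_0}$, i.e. $P_{[p]}=P_{p_0}$. This is false: $P_{[p]}=\bigvee_{q\sim p_0}P_q$ is the supremum of many mutually non-orthogonal but generally distinct projections, and its rank typically strictly exceeds that of $P_{p_0}$. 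Concretely, for $S_N^+$ ($\CC=NC$, $N\geq 4$) and $k=2$, the three projective partitions $\idpart\otimes p_0$, $p_0\otimes\idpart$ and $\vierpartrot$ (where $p_0=\{1\}\{1'\}$) all have one through-block, hence lie in a single equivalence class with trivial symmetry group, yet $u^{\otimes 2}=2u_0\oplus 3u_1\oplus u_2$ contains $u_1$ with multiplicity $3$, not $1$. Your formula undercounts; note that the statement deliberately sums over \emph{all} $p\in\Proj(k)$ rather than over equivalence classes, precisely so that several equivalent partitions can each contribute up to $\dim(\alpha)$ copies, while each individual $\nu_p(\alpha)$ stays bounded by $\dim(\alpha)$.

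The paper avoids this by not passing through Proposition \ref{ThmGeneralDecompositionEquivClasses} at all. It starts from Proposition \ref{ThmGeneralDecomposition} ($\Id$ is the supremum of all the $P_p$), refines each $P_p$ into the minimal projections $P_p^{\gamma}(\alpha)$, $1\leq\gamma\leq\dim(\alpha)$, coming from your stage one, and then applies Lemma \ref{LemvNProj} to the family $\{P_p^{\gamma}(\alpha)\}$: the supremum $\Id$ is an \emph{orthogonal} sum of minimal projections, each dominated by, hence equal to, some $P_p^{\gamma}(\alpha)$. Pairwise orthogonality of the summands forces each $P_p^{\gamma}(\alpha)$ to occur at most once, so for fixed $(p,\alpha)$ at most $\dim(\alpha)$ of them appear, giving $0\leq\nu_p(\alpha)\leq\dim(\alpha)$. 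To repair your argument you would need to replace the choice of a single representative by such a supremum-refinement argument (or otherwise control how many of the equivalent $P_q$ are needed to exhaust $P_{[p]}$); as written, the asserted direct sum does not equal $u^{\otimes k}$.
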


\begin{proof}
Starting from Proposition \ref{ThmGeneralDecomposition}, we know that $\Id \in \Hom(u^{\otimes k}, u^{\otimes k})$ is the supremum of the projections $P_{p}$. These in turn may be written as a direct sum of projections $P_p^\gamma(\alpha)$ for $\alpha\in I(p)\subset \Irr(\Sym(p))$ and $0\leq \gamma\leq \dim(\alpha)$, by Proposition \ref{PropSymP}. By Lemma \ref{LemvNProj}, we see that $\Id$ can therefore be decomposed as a sum of minimal non-orthogonal projections, each dominated by -- and hence equal to -- some $P_p^\gamma(\alpha)$. Each of these projections gives rise to a copy of $u_p(\alpha)$.
\end{proof}

\begin{rem}
If $p\sim q$ and $\sigma\in \Sym(p, q)$, then conjugating by $\sigma$ yields an isomorphism between $\Sym(p)$ and $\Sym(q)$. If the map $\Psi$ of Proposition \ref{PropSymP} is injective, two representations $u_{p}(\alpha)$ and $u_{q}(\beta)$ are then unitarily equivalent if and only if $\alpha = \beta$. However, this fails in the general case since the images of nonequivalent minimal projections in $\C[\Sym(p)]$ may collapse to $0$ in $\Aut(u_{p})$. This makes the problem of equivalence of irreducible representations much more complicated, since it depends on the kernel of the map $p\mapsto T_{p}$.
\end{rem}




\subsection{Fusion rules}

The last part of this section is devoted to the study of some "partial fusion rules". More precisely, we adress the following question: Given two projective partitions $p$ and $q$, is it possible to write explicitely $u_{p}\otimes u_{q}$ as a direct sum of representations associated to projective partitions?

Let $p\in \Proj(k)$ and $q\in \Proj(l)$ be projective partitions and consider the projection $P_{p}\otimes P_{q}\in \Hom(u^{\otimes (k+l)}, u^{\otimes (k+l)})$. This is precisely the projection onto $u_{p}\otimes u_{q}$. From the definition, we see that it decomposes as
\begin{equation*}
(T_{p} - R_{p})\otimes (T_{q} - R_{q}) = T_{p}\otimes T_{q} - (T_{p}\otimes R_{q} + R_{p}\otimes T_{q} - R_{p}\otimes R_{q})
\end{equation*}

We are going to express this using the partitions $P_{m}$. To do so, first recal that $R_{p}$ is the supremum of the projections $P_{m}$ for $m\prec p$ and that $T_{p}$ is the supremum of the projections $P_{m}$ for $m\preceq p$.

\begin{lem}\label{LemFusion}
The operator $T_{p}\otimes R_{q} + R_{p}\otimes T_{q} - R_{p}\otimes R_{q}$ is the supremum of the projections $P_{m}$ for all projective partitions $m$ such that there exists $l\prec p$ satisfying $m\preceq l\otimes q$ or there exists $r\prec q$ satisfying $m\preceq p\otimes r$.
\end{lem}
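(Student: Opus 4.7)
My plan is to interpret the expression $T_p \otimes R_q + R_p \otimes T_q - R_p \otimes R_q$ as a join of two commuting projections, and then to unfold each factor into a supremum of $P_m$'s.

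First, I would observe that $T_p \otimes R_q$ and $R_p \otimes T_q$ commute: using $R_p \leq T_p$ and $R_q \leq T_q$, hence $T_p R_p = R_p$ and $T_q R_q = R_q$, both products equal $R_p \otimes R_q$. For two commuting projections $A$ and $B$ one has $A \vee B = A + B - AB$, so the expression in question is precisely the join $(T_p \otimes R_q) \vee (R_p \otimes T_q)$.

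Second, I would rewrite each $R_s$ as $\bigvee_{r \prec s} T_r$. By definition $R_s = \bigvee_{r \prec s} P_r$, and more generally $T_s = \bigvee_{m \preceq s} P_m$ for any projective partition $s$: indeed $R_s \leq T_s$, so $P_s = T_s - R_s$ and $R_s$ are orthogonal projections summing to $T_s$, giving $T_s = P_s \vee R_s = \bigvee_{m \preceq s} P_m$. For $r \prec s$, any $m \preceq r$ satisfies $m \prec s$ (by transitivity and antisymmetry of $\preceq$ from Lemma \ref{LemComparisonProjective}), so $T_r \leq R_s$; the reverse containment is immediate from the definition of $R_s$, and hence $R_s = \bigvee_{r \prec s} T_r$.

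Third, since tensoring on the left by a fixed projection preserves arbitrary joins of projections (because $\Ima(T_p \otimes \bigvee A_i) = \Ima(T_p) \otimes \sum \Ima(A_i) = \sum \Ima(T_p \otimes A_i)$), one obtains
\begin{equation*}
T_p \otimes R_q = \bigvee_{r \prec q} (T_p \otimes T_r) = \bigvee_{r \prec q} T_{p \otimes r} = \bigvee \{P_m : m \preceq p \otimes r \text{ for some } r \prec q\},
\end{equation*}
using that $p \otimes r$ is projective whenever $p$ and $r$ are (so $T_p \otimes T_r = T_{p \otimes r}$ by Proposition \ref{PropTpRules}), together with the identity $T_{p \otimes r} = \bigvee_{m \preceq p \otimes r} P_m$ from the previous step. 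Symmetrically, $R_p \otimes T_q = \bigvee \{P_m : m \preceq l \otimes q \text{ for some } l \prec p\}$. Combining these two expressions with the join formula of the first step yields the claimed description.

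The argument is essentially formal lattice manipulation inside the finite-dimensional von Neumann algebra $\Hom(u^{\otimes (k+l)}, u^{\otimes (k+l)})$. The main points requiring care are the identity $T_s = \bigvee_{m \preceq s} P_m$ and the compatibility of tensor products with joins of projections, but both follow directly from definitions once stated.
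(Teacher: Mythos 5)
Your proof is correct and follows essentially the same route as the paper's: identify the expression as the join of the commuting projections $T_p\otimes R_q$ and $R_p\otimes T_q$ via the formula $A\vee B = A+B-AB$, then unfold each factor as a supremum of $T_{p\otimes r}$ (resp.\ $T_{l\otimes q}$) and finally of the projections $P_m$. The only difference is that the paper leaves the final unfolding $T_s=\bigvee_{m\preceq s}P_m$ implicit (it follows from Remark \ref{RemRp}), whereas you supply it explicitly.
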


\begin{proof}
Set $A = T_{p}\otimes R_{q}$, $B = R_{p}\otimes T_{q}$ and $C = R_{p}\otimes R_{q}$. Then, $AB = BA = C$, i.e. $C$ is the minimum of the commuting projections $A$ and $B$. This implies that $A+B-C$ is the supremum of the projections $A$ and $B$. Now, $T_{p}\otimes R_{q}$ is the sumpremum of the projections $T_{p\otimes r}$ for $r\prec q$ and $R_{p}\otimes T_{q}$ is the supremum of the projections $T_{l\otimes q}$ for $l\prec p$, hence the result.
\end{proof}

Recall that $X_{\CC}(p, q)$ denotes the set of partitions dominated by $p\otimes q$ which are not dominated by $l\otimes q$ for $l \prec p$ or by $p\otimes r$ for $r \prec q$. Lemma \ref{LemFusion} implies that $(P_{p}\otimes P_{q})P_{m}\neq 0$ if and only if $m\in X_{\CC}(p, q)$. However, the latter operator need not be a projection in general, so that we cannot directly deduce the decomposition of the tensor product from this. If $P_{m}$ is minimal (e.g. when we are considering noncrossing partitions), we can still work out the decomposition. Along the way, we will also give an explicit description of the set $X_{\CC}(p, q)$ using the mixing partitions introduced in Definition \ref{DeMixPart} and the set $Y(p, q)$ of partitions $p\ast_{h}q$ for all $(t(p), t(q))$-mixing partitions $h$.                                                                                                                                                                            

\begin{thm}\label{ThmFusionRules}
Let $\CC$ be a category of partitions. Then, $X_{\CC}(p, q) = Y(p, q) \cap \CC$. Moreover, if $P_{m}$ is minimal for all $m\in X_{\CC}(p, q)$, then
\begin{equation*}
u_{p}\otimes u_{q} = \sum_{m\in X_{\CC}(p, q)}v_{m},
\end{equation*}
where $v_{m}$ is equivalent to $u_{m}$ and the sum means that the smallest representation of $\G$ containing $v_{m}$ for every $m\in X_{\CC}(p, q)$ is $u_{p}\otimes u_{q}$.
\end{thm}

\begin{proof}
In view of Proposition \ref{PropDominatedTensor}, for the first part we only have to prove that  $X_{\CC}(p, q)\supset (Y(p, q)\cap\CC)$. Let $m = p\ast_{h} q\in\CC$ for some $(t(p), t(q))$-mixing partition $h$. Then, $m$ is projective and $m\preceq p\otimes q$ by Lemma \ref{LemTensorSubproj}. Now, let $p'\preceq p$ be a projective partition in $\CC$, such that $m\preceq p'\otimes q$ but $m\not\preceq l\otimes q$ for any $l\preceq p'$, $l\neq p'$, $l\in \CC$. Similarly, let $q'\preceq q$ be a projective partition in $\CC$, such that $m\preceq p'\otimes q'$ but $m\not\preceq p'\otimes r$ for any $r\preceq q'$, $r\neq q'$, $r\in \CC$. Then, $m\in X_{\CC}(p', q')$ and hence $m = p'\ast_{h'} q'$ for some mixing partition $h'$ by Proposition \ref{PropDominatedTensor}. Lemma \ref{LemTensorSubproj} then yields $p = p'$, $q = q'$ and $h = h'$, hence $m\in X_{\CC}(p, q)$.

Let now $m\in X_{\CC}(p, q)$, assume that $P_{m}$ is minimal and set
\begin{equation*}
R = T_{p}\otimes R_{q} + R_{p}\otimes T_{p} - R_{p}\otimes R_{q}
\end{equation*}
so that $P_{p}\otimes P_{q} = T_{p}\otimes T_{q} - R$. By minimality, $RP_{m}R = \lambda P_{m}$ with $\lambda\in \C$ and $\vert\lambda\vert = \|RP_{m}R\| < 1$ because $P_{m}$ is not dominated by $R$. Setting $V = (P_{p}\otimes P_{q})P_{m}$, we then have
\begin{equation*}
V^{*}V = P_{m} - RP_{m}R = (1-\lambda)P_{m}.
\end{equation*}
Let $\mu$ be a square root of $(1-\lambda)^{-1}$, so that $(\mu V)^{*}(\mu V) = P_{m}$. This means that $\mu V$ is a partial isometry, hence $(\mu V)(\mu V)^{*} = \vert\mu\vert^{2}(P_{p}\otimes P_{q})P_{m}(P_{p}\otimes P_{q})$ is a projection dominated by $P_{p}\otimes P_{q}$ and equivalent to $P_{m}$. Letting $v_{m}$ be the associated representation, we can conclude thanks to Lemma \ref{LemFusion}.
\end{proof}

Note that the sum in the above theorem is not a decomposition into orthogonal summands. If $P_{m}$ is not irreducible, then $RP_{m}R$ is a linear combination of operators of the form $P_{m}T_{\sigma}P_{m}$. There is no obvious reason why this should be a projection in general, so that the fusion rules may be more complicated when crossing are allowed.


\section{Special cases}\label{SecSpecial}

In this section, we study the two extreme cases of easy quantum groups: \emph{classical groups} and \emph{free quantum groups}. In the first case, the linear independence problem is in some sense not tractable. We try to formalize this problem and link it to some algebraic notions from classical representation theory. In the second case, all the non-injectivity issues of the previous section vanish and we can give a complete unified description of the representation theory.

\subsection{Classical groups}

The list of easy (classical) groups includes $O_{N}$ and $S_{N}$. For the complete list see \cite{banica2009liberation}. Their corresponding categories of partitions contain the crossing partition $\crosspart\in P(2, 2)$. Recall from \cite{banica2009liberation} that $O_{N}$ corresponds to the category of all pair partitions $P_{2}$ whereas $S_{N}$ corresponds to all partitions $P$.

To formalize the issue of linear independence for the maps $T_{p}$, we introduce an algebra inspired from the classical representation theory of orthogonal groups. These algebras may be introduced for general categories $\CC$ of partitions (not only those containing the crossing partition $\crosspart$).

\begin{de}
Let $\CC$ be a category of partitions and let $k$ be an integer. The \emph{generalized Brauer algebra} of $\CC$ of order $k$ with parameter $N$ is the vector space generated by all partitions $p\in \CC(k, k)$ together with the algebra structure induced by
\begin{equation*}
q.p = N^{-\rl(q, p)}qp
\end{equation*}
and the involution $p\mapsto p^{*}$. We will denote it $B_{k}(\CC, N)$.
\end{de}

\begin{rem}
The classical representation theory of orthogonal groups involves combinatorial tools called \emph{Brauer diagrams} introduced by R. Brauer in \cite[Sec 5]{brauer1937algebras}. These are in fact exactly pair partitions in $P_{2}(k, k)$. From this remark we see that if $\CC = P_{2}$ is the category of all pair partitions, the algebras $B_{k}(P_{2}, N)$ are the so-called \emph{Brauer algebras} with parameter $N$.
\end{rem}

The easiness assumption then implies that the $*$-homomorphism
\begin{equation*}
\Xi: B_{k}(\CC, N) \rightarrow \Hom(u^{\otimes k}, u^{\otimes k})
\end{equation*}
is surjective. According to \cite[Sec 5]{brauer1937algebras}, this fact is also a consequence of the \emph{first fundamental theorem of invariant theory for $O_{N}$}.

Controlling linear independence now amounts to understanding the kernel $K_{k}(\CC, N)$ of the map $\Xi$. Let us give two basic results concerning this problem:
\begin{itemize}
\item By a simple dimension-counting argument, we see that $K_{k}(\CC, N) = \{0\}$ if $k\leqslant N$. If moreover $\CC$ contains $\crosspart$ (i.e. if $\G$ is a classical group) then $K_{k}(\CC, N) \neq \{0\}$ if $k > N$.
\item The \emph{second fundamental theorem of invariant theory for $O_{N}$} gives an explicit set of generators of the ideal $K_{k}(P_{2}, N)$ (see e.g. \cite{lehrer2012second} for a recent statement of this result).
\end{itemize}

\begin{rem}
G. Lehrer and R. Zhang improved in \cite{lehrer2012second} the second fondamental theorem of invariant theory for $O_{N}$ by giving an explicit idempotent $E$ generating the whole ideal $K_{k}(P_{2}, N)$. This means in particular that the two-sided ideal generated by $E$ in $B_{k}(\CC, N)$ is contained in $K_{k}(\CC, N)$ for any category of partitions $\CC$ containing the simple crossing. It would be interesting to know whether this ideal is always the whole kernel or not.
\end{rem}

The study of the generalized Brauer algebras is a problem which is beyond the scope of this paper. It is, however, a necessary step towards the understanding of fusion rules not only for easy classical groups but also for easy quantum groups whose categories involve partitions with some crossings. We should also point out that it is even quite unclear how to link our work with the classical theory of, say, $S_{N}$. Let us give an example. Two projective partitions in $P$ are obviously equivalent if and only if they have the same number of through-blocks. Thus, our general theory gives us a family of representations $u_{k}$ indexed by $\N$ which are either $0$ or non-equivalent to any other. The natural question, to which we have no answer at the moment, is:

\begin{q}
How do these representations decompose into irreducible ones?
\end{q}

For the moment, this "group issue" suggests that the representation theory of easy classical groups should rather be used as a building block to study general easy quantum groups. Giving precise statements concerning the way these building blocks enter the picture is difficult for the moment. However, a work of S. Raum and the second author \cite{raum2013easy} provides strong evidence for this, by giving many new explicit examples.

\subsection{Free quantum groups}

All along this section, we make the assumption $N\geqslant 4$. By lemma \ref{lem:linearindependence}, this implies that the maps $T_{p}$ for $p\in \CC(k, k)$ are linearly independent for any category of noncrossing partitions $\CC$. This is precisely what makes free quantum groups very tractable from the point of view of representation theory, as will appear. Let us first define this notion.

\begin{de}
An easy quantum group $\G$ is said to be \emph{free} if its associated category of partitions $\CC$ is noncrossing.
\end{de}

Free easy quantum groups have been completely classified in \cite{banica2009liberation} and \cite{weber2012classification}. Their representation theory was studied in \cite{banica1996theorie}, \cite{banica1999symmetries} and \cite{banica2009fusion} -- nevertheless, our approach gives a unified treatment of all these results. It also enlights the fact that free quantum groups are in some sense much easier to handle than classical groups. This comes right from the linear independence of the maps $T_{p}$.

\begin{thm}\label{ThmFusionRulesNC}
Let $\CC$ be a category of \emph{noncrossing} partitions, let $N\geqslant 4$ and let $\G$ be the associated easy quantum group. Then, the representation $u_{p}$ is non-zero and irreducible for every projective partition $p\in \CC$. In other words, there is a bijection
\begin{equation*}
\Irr(\G) \simeq \left(\bigcup_{k}\Proj(k)\right)/\sim
\end{equation*}
\end{thm}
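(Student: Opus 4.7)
My plan rests on three steps, each of which is essentially mechanical once the key technical input---linear independence of $\{T_q : q \in \CC(k,k)\}$ for $N \geq 4$ and noncrossing $\CC$---is in place. This linear independence was recorded in the remark following Proposition \ref{PropSymP} and is the engine of the whole argument.

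First I would check that $P_p \neq 0$ (hence $u_p \neq 0$) for every projective $p \in \CC(k,k)$. The proposition following Lemma \ref{LemvNProj} expresses $R_p$ as a linear combination of operators $T_r$ with $t(r) < t(p)$. If $P_p = 0$, then $T_p = R_p$ would lie in the linear span of such $T_r$, contradicting linear independence of the family $\{T_q\}_{q \in \CC(k,k)}$, since $T_p$ itself is one of the basis elements and has strictly larger through-block count than each $T_r$ appearing. In the degenerate case $R_p = 0$ we simply have $P_p = T_p$, which is nonzero by Proposition \ref{PropRankTp}.

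Next I would prove irreducibility. Because $\CC$ is noncrossing, Remark \ref{RemNCTrivialSym} tells us that $\Sym(p)$ is trivial, so $\C[\Sym(p)] = \C$. Combined with $P_p \neq 0$ and linear independence of the $T_q$'s, Proposition \ref{PropSymP} then gives an isomorphism $\Psi : \C[\Sym(p)] \xrightarrow{\sim} \Aut(u_p)$. Therefore $\Aut(u_p) = \C \cdot \Id$, which is exactly the statement that $u_p$ is irreducible.

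Finally, I would establish the bijection. Proposition \ref{ThmGeneralDecomposition} says that $u^{\otimes k}$ is generated as a subrepresentation by the family $\{u_p : p \in \Proj(k)\}$. Since every irreducible representation of $\G$ appears as a subrepresentation of some $u^{\otimes k}$ (a standard consequence of Woronowicz's Peter--Weyl theory, using $u \simeq \overline{u}$, which holds automatically for easy quantum groups), any irreducible must be equivalent to some irreducible subrepresentation of some $u_p$---and $u_p$ itself, being irreducible, is the only candidate. This yields surjectivity of $p \mapsto [u_p]$ onto $\Irr(\G)$. Injectivity modulo $\sim$ is precisely Theorem \ref{ThmUnitaryEquivalence}: since all $u_p$ are nonzero, $u_p \simeq u_q$ if and only if $p \sim q$. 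The main obstacle, apart from importing the linear independence fact, is the assertion that every irreducible shows up in some tensor power of $u$, but this follows at once from $u \simeq \overline{u}$ in Woronowicz's framework.
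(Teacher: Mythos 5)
Your proposal is correct and follows essentially the same route as the paper: linear independence of the maps $T_q$ for $N\geq 4$ gives $P_p\neq 0$ and makes $\Psi$ of Proposition \ref{PropSymP} an isomorphism, while Remark \ref{RemNCTrivialSym} gives triviality of $\Sym(p)$, hence $\Aut(u_p)=\C$ and irreducibility; the bijection then follows from Proposition \ref{ThmGeneralDecomposition} and Theorem \ref{ThmUnitaryEquivalence} exactly as you say. You merely spell out details the paper leaves implicit (the non-vanishing of $P_p$ via the proposition following Lemma \ref{LemvNProj}, and the Peter--Weyl argument for surjectivity), which is fine.
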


\begin{proof}
Linear independence ensures that $u_{p}\neq 0$ and that the map $\Psi$ of Proposition \ref{PropSymP} is an isomorphism. Since the symmetry group of a noncrossing projective partition is trivial (see Remark \ref{RemNCTrivialSym}), we deduce that $u_{p}$ is irreducible.
\end{proof}

If $\CC$ is a category of noncrossing partitions, $\Sym(p, q)$ contains at most one element for every $p$ and $q$. Thus, according to Proposition \ref{PropUnitaryEquivalence}, two representations $u_{p}$ and $u_{q}$ are unitarily equivalent if and only if $r^{p}_{q} = r^{p}_{q}(\id)\in \CC$. Using this, we can recover some well-known facts about the representation theory of free easy quantum groups.

Up to now, our labelling of the equivalence classes of irreducible representations only tells us that there are countably many of them, which is not very interesting. The whole strength of this labelling is that it behaves nicely with respect to the tensor product and Theorem \ref{ThmFusionRules} allows us to recover the fusion rules of free easy quantum groups. To do this, we have to determine the sets $X_{\CC}(p,q)$ for noncrossing categories of partitions $\CC$. It turns out that the only noncrossing mixing partitions (see Definition \ref{DeMixPart}) are of the following form.

We denote by $h_{\square}^{k}$ the projective partition in $NC(2k, 2k)$ where the $i$-th point in each row is connected to the $(2k-i+1)$-th point in the same row (i.e. an increasing inclusion of $k$ blocks of size $2$). If moreover we connect the points $1$, $k$, $1'$ and $k'$, we obtain another projective partition in $NC(2k, 2k)$ denoted $h_{\boxvert}^{k}$:
\setlength{\unitlength}{0.5cm}
\begin{center}
\begin{picture}(15,9)
 \put(1,4){$h_{\square}^{k} =$}
 \put(3,9){\partii{3}{1}{10}}
 \put(3,9){\partii{2}{2}{9}}
 \put(3,9){\partii{1}{5}{6}}
 \put(6.5,8){$\ldots$}
 \put(10.5,8){$\ldots$}
 \put(6.5,0){$\ldots$}
 \put(10.5,0){$\ldots$}
 \put(3,0){\uppartii{3}{1}{10}}
 \put(3,0){\uppartii{2}{2}{9}}
 \put(3,0){\uppartii{1}{5}{6}}
\end{picture}
\end{center}
and
\setlength{\unitlength}{0.5cm}
\begin{center}
\begin{picture}(15,9)
 \put(1,4){$h_{\boxvert}^{k} =$}
 \put(3,9){\partii{3}{1}{10}}
 \put(3,9){\partii{2}{2}{9}}
 \put(3,9){\partii{1}{5}{6}}
 \put(6.5,8){$\ldots$}
 \put(10.5,8){$\ldots$}
 \put(6.5,0){$\ldots$}
 \put(10.5,0){$\ldots$}
 \put(3,0){\uppartii{3}{1}{10}}
 \put(3,0){\uppartii{2}{2}{9}}
 \put(3,0){\uppartii{1}{5}{6}}
 \put(8.8,3){\line(0,1){2}}
\end{picture}
\end{center}

As a technical tool, we define the following versions of $h_{\square}^{k}$ and $h_{\boxvert}^{k}$, where we fill up with identity partitions to the left and right. If $\alpha, \beta\in\N_0$ and $k\leq \min(\alpha,\beta)$, we set
\begin{eqnarray*}
{\tilde h}_{\square}^{k}:= \idpart^{\otimes (\alpha-k)}\otimes h_{\square}^{k} \otimes \idpart^{\otimes (\beta - k)}\\
{\tilde h}_{\boxvert}^{k}:= \idpart^{\otimes (\alpha-k)}\otimes h_{\boxvert}^{k} \otimes \idpart^{\otimes (\beta - k)}
\end{eqnarray*}

\begin{de}\label{DefBlockReduction}
Let $p$ and $q$ be projective partitions. For $0\leqslant k\leqslant \min(t(p), t(q))$, we set (referring to Definition \ref{DeMixPartAndPQ}):
\begin{eqnarray*}
p\square^{k} q & := \; p\ast_{{\tilde h}_{\square}^{k}}q & = \; (p_u^*\otimes q_u^*){\tilde h}_{\square}^{k}(p_u\otimes q_u)\\
p\boxvert^{k} q & := \; p\ast_{{\tilde h}_{\boxvert}^{k}}q & = \; (p_u^*\otimes q_u^*){\tilde h}_{\boxvert}^{k}(p_u\otimes q_u)
\end{eqnarray*}
\end{de}

In other words, the partition $p\square^{k} q$ is constructed by inserting the partition $h_{\square}^{k}$ (filled up with identity partitions) as a pattern of connecting the through-blocks of $p$ and $q$; likewise $p\boxvert^k q$.

We then have the following statement.

\begin{lem}\label{LemDescriptionOfX}
The only noncrossing mixing partitions are the partitions ${\tilde h}_{\square}^{k}$ and ${\tilde h}_{\boxvert}^{k}$. Thus, for any noncrossing category $\CC$ we have the equality:
\begin{equation*}
X_{\CC}(p, q) = \{p\square^{a}q, p\boxvert^{b}q \;\vert\; 0\leqslant a \leqslant \min(t(p), t(q))\text{ and }1\leqslant b\leqslant \min(t(p), t(q))\}\cap \CC
\end{equation*}
Moreover, if $p$ and $q$ are projective partitions and if $a\leqslant \min(t(p), t(q))$, then
\begin{equation*}
t(p \boxvert^{a} q) + 2a = t(p) + t(q) + 1 \text{ and } t(p \square^{a} q) + 2a = t(p) + t(q).
\end{equation*}
\end{lem}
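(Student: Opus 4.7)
My plan is to proceed in three stages: classify the noncrossing $(k,l)$-mixing partitions, translate this into the claimed description of $X_\CC(p,q)$, and finally compute the through-block counts.

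For the classification, I would fix a noncrossing mixing partition $h$ and call a block a \emph{mixing block} when it connects an upper left point ($a\leq k$) to an upper right point ($b > k$), either as a size-$2$ pair $(a,b)$ or as part of a size-$4$ block $(a,a',b,b')$. Each such block defines a chord in the cyclic order $1,\dots,k+l,(k+l)',\dots,1'$, and noncrossing forces these chords to be nested. A standard cyclic-order argument then shows that a through-block $(a,a')$ with $a\leq k$ lying inside such a chord creates a crossing (since $a$ lies on the short arc of the chord while $a'$ lies on the long arc); the same obstruction rules out through-blocks sitting between two consecutive nested chords. Hence the mixing blocks must occupy the consecutive inner positions $(k-m+1,k+m),(k-m+2,k+m-1),\dots,(k,k+1)$ for some $0\leq m\leq\min(k,l)$, with all other left and right points being through-blocks $(a,a')$.

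It then remains to decide which assignments of size-$2$ versus size-$4$ to the $m$ nested mixing blocks produce a noncrossing partition. Checking the eight points of two adjacent nested mixing blocks directly in cyclic order, I would show that two nested size-$4$ blocks force an ABAB crossing, and that a size-$2$ block sitting outside a size-$4$ block does the same. Consequently at most one size-$4$ block appears, and when present it must be the outermost. This leaves exactly the two families ${\tilde h}_\square^m$ (all size-$2$) and ${\tilde h}_\boxvert^m$ (outermost size-$4$, the rest size-$2$), establishing the first claim. The description of $X_\CC(p,q)$ now follows from Theorem \ref{ThmFusionRules} (giving $X_\CC(p,q) = Y(p,q)\cap\CC$) together with the inversion $(p_u\otimes q_u)(p\ast_h q)(p_u^*\otimes q_u^*) = h$ implicit in the proof of Lemma \ref{LemTensorSubproj}: when $\CC$ is noncrossing and $p,q\in\CC$, the building partitions $p_u,q_u$ are themselves noncrossing, so $p\ast_h q\in\CC$ forces $h$ to be noncrossing and the classification above pins down the allowed $h$.

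Finally, for the through-block formulas I would use $p\ast_h q = s^*s$ where $s := h_u(p_u\otimes q_u)$, so that $t(p\ast_h q) = t(s)$ by Lemma \ref{LemPPStarProj}; a short argument (each lower point of the building partition $h_u$ contributes exactly one through-block of $s$, and two such contributions cannot merge since the middle points lie in distinct blocks of $p_u\otimes q_u$) gives $t(s) = t(h)$. Direct inspection yields $t(h_\square^k) = 0$ and $t(h_\boxvert^k) = 1$; adding the $t(p)-k$ and $t(q)-k$ identity strands filling up ${\tilde h}_\square^k$ and ${\tilde h}_\boxvert^k$ produces the stated equalities $t(p\square^{a}q)+2a = t(p)+t(q)$ and $t(p\boxvert^{a}q)+2a = t(p)+t(q)+1$. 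The main obstacle will be the size-$2$/size-$4$ case analysis: the crossings are easy to overlook in cyclic-order bookkeeping, and one must also positively verify that the outer-size-$4$/inner-size-$2$ configuration really is noncrossing, rather than merely eliminating the forbidden mixes.
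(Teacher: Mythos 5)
Your proof is correct and follows essentially the same route as the paper: the paper simply declares the classification of noncrossing mixing partitions ``obvious'' and then performs the same through-block count ($a$ through-blocks of each factor are consumed, the survivors plus possibly the one four-block remain), so your contribution is to supply the nesting/crossing case analysis that the paper omits. The cyclic-order argument and the size-$2$/size-$4$ elimination are sound, and the reduction of $t(p\ast_h q)$ to $t(h)$ via the building-partition structure is a valid (slightly more formal) version of the paper's direct count.
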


\begin{proof}
The first part is obvious. The number of through-blocks of $p$ can be written as $t(p) = x + a$, whereas $t(q) = y + a$, for some numbers $x$ and $y$. Furthermore, $t(p \boxvert^{a} q) = x + y + 1$. This yields the first part of the equation, and likewise for the case of $p\square^{a} q$.
\end{proof}

\begin{rem}
Note that in particular, any two subrepresentation of $u_{p}\otimes u_{q}$ have different number of through-blocks, hence they are orthogonal (see the proof of Proposition \ref{ThmGeneralDecompositionEquivClasses} in combination with Lemma \ref{LemEquivAndT}). Thus, the sums in the formula of Theorem \ref{ThmFusionRules} become true orthogonal sums of representations.
\end{rem}

With this simpler formulation we can recover straightforwardly the representation theory of free easy quantum groups. For the convenience of the reader, we write it in such a way such that it could be used as a starting point to read this article.

\begin{ex}
The \emph{free symmetric quantum group} $S_{N}^{+}$ was introduced by S. Wang in \cite{wang1995free} and its representation theory was studied by T. Banica in \cite{banica1999symmetries}. 
Since the intertwiner spaces $\Hom(u^{\otimes k},u^{\otimes l})$ of $S_N^+$ are spanned by the maps $T_p$, where $p\in NC(k,l)$, the quantum group $S_N^+$ is easy, with corresponding category $NC$ (all noncrossing partitions). (See \cite{banica2009liberation} and \cite{weber2012classification} for details.) We can now give a new way of computing the fusion rules for $S_N^+$.

To any projective partition $p\in NC(k,k)$, we associate the projection $P_p=T_p-R_p\in \Hom(u^{\otimes k},u^{\otimes k})$ and the representation $u_p=(\iota\otimes P_p)(u^{\otimes k})$ according to Definition \ref{DeSubreprAssocToPartitions}. By Theorem \ref{ThmFusionRulesNC}, we infer that $u_p$ is nonzero and irreducible, if $N\geq 4$. By Theorem \ref{ThmUnitaryEquivalence}, we know that for two projective partitions $p,q\in NC$, the representations $u_p$ and $u_q$ are unitarily equivalent if and only if there exists a partition $r\in NC$ such that $r^*r=p$ and $rr^*=q$. If this is the case, we have $t(p)=t(r^*r)=t(rr^*)=t(q)$ for the number of through-blocks (see Lemma \ref{LemPPStarProj}). Conversely, if $t(p)=t(q)$, we consider the through-block decompositions $p=p_u^*p_u$ and $q=q_u^*q_u$ of $p$ and $q$ according to Proposition \ref{PropThroughBlockDecomp}. The partition $r:=q_u^*p_u$ is in $NC$ and we infer that $r^*r=p$ and $rr^*=q$, using Lemma \ref{LemPPStar}.

Therefore, two representations $u_p$ and $u_q$ are unitarily equivalent if and only if $t(p)=t(q)$. Hence, we can label the irreducible partitions of $S_N^+$ by positive integers $\N_0$. Note that to any $k\geq 1$, the partition $p=\idpart^{\otimes k}\in NC$ fulfills $t(p)=k$ and hence we can choose it as a representative for $u_k$. For $k=0$, the partition $p_0=\{1\}\{1'\}\in NC(1,1)$ given by a singleton on the upper point and a singleton on the lower point is a representative for $u_0$, since $t(p_0)=0$.
From Theorem \ref{ThmFusionRules}, we now obtain the fusion rules:
\[u_k\otimes u_l = \sum_{m\in X_{NC}(p,q)} u_m\]
Here, $p$ and $q$ are the canonical representatives for $k,l\in \N_0$. By Lemma \ref{LemDescriptionOfX}, the set $X_{NC}(p,q)$ is given by:
\[X_{NC}(p,q)=\{p\square^a q, p\boxvert^b q\;|\; 0\leq a\leq \min(k,l), 1\leq b\leq \min(k,l)\}\]
Furthermore, $t(p\square^a q)=t(p)+t(q)-2a=k+l-2a$ and $t(p\boxvert^b q)=k+l-2b+1$, thus the values of $t(m)$ for $m\in X_{NC}(p,q)$ are all numbers $|k-l|\leq x\leq k+l$. This yields the fusion rules for $S_N^+$:
\[u_k\otimes u_l = u_{|k-l|}\oplus u_{|k-l|+1}\oplus\ldots\oplus u_{k+l}\]
\end{ex}

\begin{ex}
Another, and in fact older, example of a free easy quantum group is the \emph{free orthogonal quantum group} introduced by S. Wang in \cite{wang1998quantum}. Its associated category of partitions is the set $NC_{2}$ of all noncrossing pair partitions and its representation theory was first computed by T. Banica in \cite{banica1996theorie}. 

As in the case of $S_{N}^{+}$, two projective partitions $p$ and $q$ in $NC_2$ yield equivalent representations if and only if $t(p) = t(q)$. (Note that $p_u$ is a noncrossing \emph{pair} partition, if $p$ is a noncrossing pair partition.) Thus, we can label the irreducible representations $u_p$, $p\in NC_2$ by positive integers $k\in\N_0$, choosing representatives $p\in NC_2$ with $t(p)=k$.

Since $p\boxvert^b q\notin NC_2$, the set $X_{NC_2}(p,q)$ is only given by
\[X_{NC_2}(p,q)=\{p\square^a q \;|\; 0\leq a\leq \min(k,l)\}\]
and the values of $t(p\square^a q)$ are all numbers $|k-l|, |k-l|+2,\ldots,k+l$.
Theorem \ref{ThmFusionRules} then yields:
\[u_k\otimes u_l = u_{|k-l|}\oplus u_{|k-l|+2}\oplus\ldots\oplus u_{k+l}\]
\end{ex}

\begin{ex}
The \emph{modified symmetric quantum group} $S_N'^+$, the \emph{modified bistochastic quantum group} $B_N'^+$ and the \emph{freely modified bistochastic quantum group} ${B_N^\#}^+$ (see \cite{banica2009liberation} and \cite{weber2012classification}) are \emph{not} closed under the parts of the through-block decomposition. For instance, the partition $p_0=\{1\}\{1'\}\in NC(1,1)$ is in all categories $\CC$ corresponding to $S_N'^+$,  $B_N'^+$ and ${B_N^\#}^+$ respectively, but its through-block decomposition $p_0=p_u^*p_u$ yields the singleton $p_u=\singleton$, which is not in $\CC$ in all three cases. Therefore, the classes of irreducible representations are not given only by the number of through-blocks -- a circumstance which also appears in the next example.
\end{ex}

Let us now turn to a more difficult example, the  \emph{hyperoctahedral quantum group} $H_N^+$ introduced by Banica, Bichon and Collins in \cite{banica2007hyperoctahedral}. The computation of the fusion rules of this easy quantum group is a bit more tricky than for $S_N^+$ and $O_N^+$. The reason is exactly the same as in the previous example: The category $\langle\vierpart\rangle$ of all noncrossing partitions with blocks of even size -- corresponding to $H_N^+$ -- is \emph{not} closed under the through-block decomposition. 
Consider for instance the partition $p=\vierpartrot\in NC(2,2)$ whose through-block decomposition yields the partition $p_u\in NC(2,1)$ consisting of a single three block -- which is not in $\langle\vierpart\rangle$. 

Thus, we have irreducible representations $u_p$ and $u_q$ which are not isomorphic, but fulfill $t(p)=t(q)$. As an example, take the projective partitions
\begin{equation*}
p = \vierpartrot\idpart \text{ and } q = \idpart\vierpartrot
\end{equation*}
from Remark \ref{RemHN+}.

We hence have to assign more data to a partition $p\in\langle\vierpart\rangle$ in order to determine the equivalence classes of the irreducible representations $u_p$. If $p$ is such a partition, we associate to each of its through-blocks the number 0 if the block has size $4k$ for some integer $k\in\N$ and 1 otherwise. Looking at through-blocks of $p$ from left to right, we get a word $w(p)$ in the free monoid over $\Z_2$. For instance, the above partition $p$ would yield $w(p)=01$, whereas $w(q)=10$.

\begin{lem}
If $\CC = \langle \vierpart\rangle$, then $p\sim q$ if and only $w(p) = w(q)$. In other words, if $A$ denotes the free monoid over $\Z_{2}$, then two irreducible representations of $H_{N}^{+}$ are unitarily equivalent if and only the associated partitions yield the same word in $A$.
\end{lem}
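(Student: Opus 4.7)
The plan is to exploit the through-block decomposition together with the observation (already noted in the excerpt) that the only noncrossing through-partition in $P_2(m,m)$ is $\idpart^{\otimes m}$. This fact, applied to any $r \in \langle\vierpart\rangle \subseteq NC$, is what makes the word $w$ a meaningful invariant: without it, one would only get $w(p) = w(q)$ up to the permutation encoded by $r_m$.

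For the backward direction ($w(p)=w(q) \Rightarrow p\sim q$), the natural candidate for the witnessing partition is $r := q_u^* p_u$. That it satisfies $r^*r = p$ and $rr^* = q$ is an immediate consequence of Lemma \ref{LemPPStar} together with $t(p)=t(q)$ (which is forced by $w(p)=w(q)$). The only thing to verify is $r\in\CC$, i.e.\ that $r$ is noncrossing with even blocks. Noncrossingness is automatic from the composition of two noncrossing building partitions. The blocks of $r$ fall into three types: the upper non-through-blocks coming from $p_u$, which are already blocks of $p\in\CC$ and hence even; the lower non-through-blocks coming from $q_u$, which are blocks of $q\in\CC$ and hence even; and the through-blocks of $r$, one per index $j \in \{1,\dots,t(p)\}$. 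Writing $n_j$ (resp.\ $m_j$) for the number of upper points in the $j$-th block of $p_u$ (resp.\ $q_u$), the $j$-th through-block of $r$ has size $n_j + m_j$, while the $j$-th through-block of $p$ has size $2n_j$ and the $j$-th through-block of $q$ has size $2m_j$. The assumption $w(p)=w(q)$ says exactly that $2n_j$ and $2m_j$ have the same residue mod $4$ for every $j$, i.e.\ $n_j \equiv m_j \pmod{2}$, so $n_j + m_j$ is even and $r \in \langle\vierpart\rangle$.

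For the forward direction, start from any witness $r\in\CC$ with $r^*r=p$, $rr^*=q$, and consider its through-block decomposition $r = r_l^* r_m r_u$. A direct computation using Lemma \ref{LemPPStar} gives $p = r_u^* r_m^* r_m r_u = r_u^* r_u$ and similarly $q = r_l^* r_l$; uniqueness of the through-block decomposition then forces $r_u = p_u$, $r_l = q_u$ and in particular $t(p) = t(q) = t(r)$. Now invoke the key noncrossing fact: $r_m = \idpart^{\otimes t(r)}$. This means the $j$-th through-block of $r$ pairs the $j$-th block of $p_u$ (of size $n_j$ on the upper side) with the $j$-th block of $q_u$ (of size $m_j$ on the lower side of $r$), producing a block of size $n_j + m_j$. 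Since $r\in\langle\vierpart\rangle$, this size is even, so $n_j \equiv m_j \pmod 2$, which is exactly the statement that the $j$-th letters of $w(p)$ and $w(q)$ coincide.

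The only genuine obstacle is the alignment of through-blocks between $p$, $r$ and $q$, which is precisely the content of $r_m = \idpart^{\otimes t(r)}$. Without the noncrossing constraint this would fail — for a general category one would recover $w(p)$ from $w(q)$ only up to a permutation, and the invariant would have to be taken in a quotient of the free monoid. It is exactly the absence of such mixing in $\langle\vierpart\rangle$ that makes the word $w(p)$ a complete invariant of the equivalence class.
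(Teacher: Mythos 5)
Your proof is correct and follows essentially the same route as the paper: both reduce the question to whether $r^p_q = q_u^*p_u$ lies in $\langle\vierpart\rangle$ (using that the only noncrossing through-partition is the identity, so any witness has trivial middle part) and then check the parity $n_j + m_j$ of the through-blocks against the letters of $w(p)$ and $w(q)$. The only difference is cosmetic: the paper invokes its earlier Proposition \ref{PropUnitaryEquivalence} together with the triviality of $\Sym(p,q)$ for noncrossing categories, whereas you rederive that reduction inline from the uniqueness of the through-block decomposition.
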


\begin{proof}
Let $p$ and $q$ be two projective partitions in $\CC$ and consider the partition $r^{p}_{q}$. Assume that $w(p) \neq w(q)$ and let $i$ be the number of the first letter which differs in the two words (starting from the left). Combining the $i$-th through-block of $p_{u}$ with the $i$-th through-block of $q_{u}^{*}$ then yields a block with odd size, and $r^{p}_{q}\notin \CC$. Assume now that $w(p) = w(q)$. Then, $r^{p}_{q}$ is noncrossing and every through-block is of even size. Since non-through-blocks of $r^{p}_{q}$ come from non-through-blocks of $p$ and $q$ which are all noncrossing and of even size, $r^{p}_{q}\in \CC$ and $p\sim q$.
\end{proof}

Thus, the irreducible representations of $H_{N}^{+}$ can naturally be indexed by $A$ and the labelling coincides with that of \cite[Thm 7.3]{banica2009fusion}.

Now, let $p$ and $q$ be projective partitions and assume that $p\square^{k} q \in \langle\vierpart\rangle$. This implies in particular that the through-blocks have even size, hence the $i$-th through-block of $p$, starting from the right, has the same parity as the $i$-th through-block of $q$, starting from the left, for every $1\leqslant i\leqslant k$. This is easier to describe at the level of $A$.

\begin{de}
We denote by $w\mapsto \overline{w}$ be the involution on $A$ consisting in reversing the words.
\end{de}

Then, $p\square^{k} q \in \langle\vierpart\rangle$ if and only if $w(p) = az$ and $w(q) = \overline{z}b$ for a word $z$ of length $k$. Moreover, all through-blocks belonging to $z$ being turned into non-through-blocks under this operation, we have $w(p\square^{k} q) = ab$.

Assume now that $p\boxvert^{k} q \in \langle\vierpart\rangle$. Using the argument above, we see that there is a word $z$ of length $k-1$ such that $w(p) = az$ and $w(q) = \overline{z}b$. Moreover, the $k$-th through-block of $p$, starting from the right, is combined with the $k$-th through-block of $q$, starting from the left. Again, this translates to $A$.

\begin{de}
We denote by
\begin{equation*}
(w, w')\mapsto w\ast w'
\end{equation*}
the operation consisting in adding the last letter of $w$ to the first letter of $w'$ (in $\Z_{2}$) and then concatenating the remainder of the words.
\end{de}

Then, $w(p\boxvert^{k} q) = a\ast b$. Summing up, we have the following formula:
\begin{equation*}
u_{w}\otimes u_{w'} = \sum_{w = az, w' = \overline{z}b} u_{ab} \oplus u_{a\ast b}.
\end{equation*}

\begin{rem}
The fusion rules for $H_{N}^{+}$ were computed by T. Banica and R. Vergnioux in \cite{banica2009fusion} as a special case of the fusion rules for the \emph{quantum reflection groups} $H_{N}^{s+}$. Nevertheless, we believe that our proof above has the advantage of showing the similarity with other free orthogonal quantum groups, whereas their  proof rather uses techniques from unitary easy quantum groups (see Section \ref{SecUnitary}).
\end{rem}

It appears in these examples that the operations $\square^{k}$ and $\boxvert^{k}$ translate into a \emph{concatenation} and a \emph{fusion} operation on the fusion semiring. These operations on the fusion semiring were considered by T. Banica and R. Vergnioux in \cite[Sec 10]{banica2009fusion} in order to formulate some general conjectures on free quantum groups (see Subsection \ref{SubsecConjecture}).

\section{Unitary quantum groups and the freeness conjecture}\label{SecUnitary}

In what precedes, the expression "easy quantum group" was used in a restrictive sense since we only considered \emph{orthogonal} easy quantum groups, i.e. easy quantum groups $\G$ satisfying
\begin{equation*}
S_{N} \subset \G \subset O_{N}^{+}.
\end{equation*}
One can also consider \emph{unitary} easy quantum groups by looking at quantum groups
\begin{equation*}
S_{N} \subset \G \subset U_{N}^{+}
\end{equation*}
satisfying some proper generalization of the "easiness" property. The definition of these unitary easy quantum groups has not appeared yet in the literature, but it is well-known to experts in the field. Our purpose is not to develop the general theory since a comprehensive study of these objects is currently undergone by P. Tarrago and the second author \cite{tarrago2015unitary}. We will only give a quick description of the easiness condition in order to show how our results can be extended to this setting.

\subsection{Colored diagrams}

The difference between $O_{N}^{+}$ and $U_{N}^{+}$ is that in the latter, the fundamental representation $u$ is not equivalent to its contragredient representation $\overline{u}$. Thus, we cannot only look at the intertwiner spaces between $u^{\otimes k}$ and $u^{\otimes l}$ to recover the whole structure of the quantum group. More precisely, we again denote by $A$ the free monoid on the set $\Z_{2}$ and consider two words $w$ and $w'$ of length $k$ and $l$ respectively. We set $u^{0} = u$, $u^{1} = \overline{u}$ and $u^{w} = u^{w_{1}}\otimes \dots \otimes u^{w_{k}}$. We want the space
\begin{equation*}
\Hom(u^{w}, u^{w'})
\end{equation*}
to be spanned by operators associated with partitions. In order to include all these spaces in a single description, we will use \emph{colored partitions}.

\begin{de}
A \emph{(two-)colored partition} is a partition with the additional data of a color (black or white) for each point. The set of all colored partitions is denoted $P^{\circ, \bullet}$.
\end{de}

The usual operations on partitions can be extended to the colored setting.

\begin{itemize}
\item To any colored partition $p$ and any integer $N$, one associates the linear map $T_{p}$ as in Definition \ref{DefTp}. Note in particular that $T_{p}$ \emph{does not depend on the coloring of $p$}.
\item Two colored partitions $q\in P^{\circ, \bullet}(k, l)$ and $p\in P^{\circ, \bullet}(l, m)$ can be composed only if the colors of the lower points of $p$ coincide with the colors of the upper points of $q$.
\item The tensor product of colored partitions is defined in the obvious way, as well as the involution.
\item There is, however, a subtlety concerning the rotation. If a point is rotated from a row to another, then its color is changed. This reflects the passage from $u$ to $\overline{u}$ in Frobenius reciprocity (which is precisely the operation encoded by the rotation).
\item There are four colored identity partitions, but only two of them will be important: the \emph{white identity} partition with both points colored in white and the \emph{black identity} partition with both points colored in black. Note that we can pass from one to the other using the rotation operation.
\end{itemize}

\begin{de}
A \emph{category of colored partitions} is the data of a set $\CC^{\circ, \bullet}(k, l)$ of colored partitions  for all integers $k$ and $l$ which is stable under the above category operations and contains the white identity (hence also the black identity).
\end{de}

Throughout this section, we will use the obvious bijection between coloring on $k$ points and words of length $k$ in $A$ obtained by sending "white" to $0$ and "black" to $1$.

\begin{de}
Let $\CC^{\circ, \bullet}$ be a category of colored partitions and let $N$ be an integer. The associated \emph{easy unitary quantum group} is the unique compact quantum group $\G$ with fundamental representation $u$ such that for any $w, w'\in A$ of length respectively $k$ and $l$, $\Hom(u^{w}, u^{w'})$ is spanned by the operators $T_{p}$ for $p\in \CC^{\circ, \bullet}(k, l)$ such that the upper coloring of $p$ is $w$ and the lower coloring of $p$ is $w'$.
\end{de}

Before trying to extend our results to this setting, we first have to check that the techniques of Section \ref{SecPartitions} still work. Everything relies on the through-block decomposition, thus we only have to make sense of it in the colored context.

\begin{de}
A \emph{colored building partition} is a building partition with any coloring on the upper row and only white points on the lower row. A \emph{colored through-partition} is a through-partition with all points colored in white.
\end{de}

With these definitions, the existence and uniqueness of the "colored through-block decomposition" is clear.

\subsection{Representation theory}

Adapting the results of Section \ref{SecRepresentation} to colored diagrams is in fact straightforward. The essential remark is that as soon as we fix a word $w\in A$ and consider $\Hom(u^{w}, u^{w})$, we have fixed the coloring of all partitions involved. Thus, everything boils down to the non-colored case.

If $p$ is a projective colored partition (i.e. $pp = p = p^{*}$), we define a projection
\begin{equation*}
P_{p} = T_{p} - \bigvee_{q\prec p}T_{q}
\end{equation*}
and a representation $u_{p} = (\ii\otimes P_{p})(u^{\otimes k})$. If $w\in A$, let us denote by $\Projc(w)$ the set of projective partitions in $\CC^{\circ, \bullet}$ the upper (hence also lower) coloring of which is $w$. With this notation, we get the decomposition result.

\begin{prop}
Let $\CC^{\circ, \bullet}$ be a category of colored partitions, let $N$ be an integer and let $\G$ be the associated unitary easy quantum group. Then, for every $w\in A$,
\begin{equation*}
u^{w} = \sum_{p\in \Projc(w)}u_{p}.
\end{equation*}
Again, this sum means that if a subrepresentation of $u^{w}$ contains $u_{p}$ for every $p\in \Projc(w)$, then it is equal to $u^{w}$.
\end{prop}

If $p$ is any projective colored partition, we define its \emph{symmetry group} $\Symc(p)$ to be the set of all through-partitions one can add in the middle of $p$. Again, since the proof of Proposition \ref{PropSymP} only deals with partitions of the form $pqp$ (which have the same coloring as $p$), we get:

\begin{prop}
Let $p$ be a projective colored partition. Then, there is a surjective $*$-homomorphism
\begin{equation*}
\Psi^{\circ, \bullet}: \C[\Symc(p)] \rightarrow \Aut(u_{p}).
\end{equation*}
\end{prop}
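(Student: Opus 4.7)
The plan is to mimic the proof of Proposition \ref{PropSymP} step by step, exploiting the fact that once one fixes a projective partition $p\in\Projc(w)$, every element of $\Aut(u_p)$ is a linear combination of operators of the form $P_p T_q P_p$, and the coloring of any such $q\in\CC^{\circ,\bullet}$ which gives a non-zero contribution is forced to agree with $w$ on both rows. Concretely, first I would define $\Psi^{\circ,\bullet}$ on a basis by $\delta_\sigma \mapsto P_p T_{p_\sigma} P_p$, where, given the colored through-block decomposition $p = p_u^* p_u$ (with $p_u$ a colored building partition whose lower points are all white), we set $p_\sigma = p_u^* r_\sigma p_u$ with $r_\sigma$ the through-partition of $\sigma\in\Symc(p)$, colored entirely in white. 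Because $p_u$ has white lower row and $r_\sigma$ is all white, $p_\sigma$ automatically has upper and lower coloring equal to that of $p$, so $p_\sigma\in\CC^{\circ,\bullet}$ whenever $\sigma\in\Symc(p)$ by the definition of the colored symmetry group.

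Next I would verify that $\Psi^{\circ,\bullet}$ is a $*$-homomorphism. Multiplicativity is the colored version of Lemma \ref{LemMultiplicativePsi}: the computation there consisted in expanding $(P_p T_{p_\sigma} P_p)(P_p T_{p_{\sigma'}} P_p)$, using that $\gamma(p_\sigma, p_{\sigma'}) = 0$, and discarding the cross-terms of the form $P_p T_{p_\sigma} R_p T_{p_{\sigma'}} P_p$ via Lemma \ref{LemGeneralt}. Each of those lemmas depends only on combinatorial identities (block counts, loop counts, and the through-block invariant) which are insensitive to colors, and the colorings of the partitions involved in the computation all match, so the composition is well-defined in the colored category. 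The $*$-property follows from $p_\sigma^* = p_{\sigma^{-1}}$, again with consistent coloring.

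For surjectivity, $\Aut(u_p)$ is spanned by the operators $P_p T_q P_p$ as $q$ ranges over partitions in $\CC^{\circ,\bullet}(k,k)$ whose upper and lower coloring is $w$. The colored version of Lemma \ref{LemGeneralt} kills all such operators with $t(pqp) < t(p)$, while if $t(pqp) = t(p)$, the colored analog of Lemma \ref{LemSymP} (which is an application of Lemma \ref{LemComparisonProjective2}, whose hypothesis and conclusion are combinatorial and do not involve colors) produces a through-partition $r_m$ with $pqp = p_u^* r_m p_u$. Since the lower points of $p_u$ are all white, this $r_m$ must have all white points, hence arises from a permutation $\sigma\in S_{t(p)}$, and the relation $p_u^* r_m p_u = pqp \in \CC^{\circ,\bullet}$ precisely says $\sigma \in \Symc(p)$. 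Then $P_p T_q P_p \in \C\cdot P_p T_{p_\sigma} P_p = \C\cdot \Psi^{\circ,\bullet}(\delta_\sigma)$, proving surjectivity.

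The only point that requires genuine attention, and which I would expect to be the sole obstacle, is checking that the through-block decomposition of a projective colored partition $p$ really does take the form $p = p_u^* p_u$ with $p_u$ a colored building partition (in particular that its lower row is white), so that the $r_\sigma$ produced in the computation is consistent with the coloring rules of $\CC^{\circ,\bullet}$ and the products $p_u^* r_\sigma p_u$ make sense as composed colored partitions. This is exactly the content of the "colored through-block decomposition" introduced just before the statement, so the proof reduces cleanly to the uncolored argument once that decomposition is in place.
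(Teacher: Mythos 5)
Your proof is correct and follows the same route as the paper, which simply observes that the argument of Proposition \ref{PropSymP} only involves partitions of the form $pqp$ (whose colorings are forced to agree with that of $p$), so everything reduces to the uncolored case via the colored through-block decomposition. Your write-up just makes that reduction explicit step by step.
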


The proof of Theorem \ref{ThmUnitaryEquivalence} clearly works in the colored case, giving:

\begin{thm}
Let $\CC^{\circ, \bullet}$ be a category of colored partitions, let $N$ be an integer and let $\G$ be the associated unitary easy quantum group. Let $p\in \Projc(k)$ and $q\in \Projc(l)$. Then, the representations $u_{p}$ and $u_{q}$ are unitarily equivalent if and only if either there exists $r\in \CC^{\circ, \bullet}(k, l)$ such that $r^{*}r = p$ and $rr^{*} = q$ or $u_{p} = u_{q} = 0$.
\end{thm}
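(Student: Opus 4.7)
The plan is to transport the proof of Theorem \ref{ThmUnitaryEquivalence} to the colored setting essentially verbatim, relying on the observation (already emphasized after Proposition \ref{PropSymP} in the colored discussion) that once the colorings of the upper and lower rows are fixed, the combinatorial identities used in the uncolored proof depend only on the underlying partition and not on the coloring. In particular, Lemmas \ref{LemPPStarProj}, \ref{LemGamma}, \ref{LemGeneralt}, \ref{LemComparisonProjective}, and \ref{LemComparisonProjective2} all carry over to colored partitions, since the compositions $pp^*$, $p^*p$, $pqp$ and so on have uniquely determined compatible colorings as soon as $p$ and $q$ do.

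For the forward direction, suppose there exists $r\in \CC^{\circ,\bullet}(k,l)$ with $r^{*}r = p$ and $rr^{*} = q$. Let $w$ and $w'$ denote the upper colorings of $p$ and $q$, so that the upper coloring of $r$ is $w$ and the lower coloring is $w'$. Define $V = P_{q}T_{r}P_{p} \in \Hom(u^{w},u^{w'})$. Using $rr^{*} = q$ and the colored version of Lemma \ref{LemGamma}, together with the colored analogue of Lemma \ref{LemGeneralt} applied to the partitions appearing in $T_{r^{*}}R_{q}T_{r}$ (which all have strictly fewer through-blocks than $p$), one computes $V^{*}V = P_{p}$ and symmetrically $VV^{*} = P_{q}$. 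Hence $V$ restricts to a unitary between $u_{p}$ and $u_{q}$.

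For the converse, assume $u_{p}$ and $u_{q}$ are unitarily equivalent and non-zero. Extend a unitary intertwiner to a partial isometry $V\in \Hom(u^{w},u^{w'})$ with $P_{q}V = V = VP_{p}$, and expand $V = \sum_{i}\lambda_{i}T_{r_{i}}$ with $r_{i}\in \CC^{\circ,\bullet}(k,l)$; here the coloring constraint on $\Hom(u^{w},u^{w'})$ forces every $r_{i}$ to have upper coloring $w$ and lower coloring $w'$. Replacing $r_{i}$ by $qr_{i}p$ is legitimate since composing with $p$ (resp.\ $q$) preserves the colorings. Because $P_{p}\neq 0$ and $VP_{p}\neq 0$, some $T_{r_{i}}P_{p}\neq 0$; then $P_{p}T_{r_{i}^{*}r_{i}}P_{p}\neq 0$, which by the colored Lemma \ref{LemGeneralt} forces $t(pr_{i}^{*}r_{i}p) = t(p)$. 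The colored Lemma \ref{LemComparisonProjective2} then gives $r_{i}^{*}r_{i} = p_{u}^{*}\rho\, p_{u}$ for some through-partition $\rho$; but $r_{i}^{*}r_{i}$ is projective, whence $\rho$ is the identity through-partition and $r_{i}^{*}r_{i} = p$. The same argument on the other side produces an index $j$ with $r_{j}r_{j}^{*} = q$, and by running the matching argument on both sides one extracts a single partition $r\in \CC^{\circ,\bullet}(k,l)$ with $r^{*}r = p$ and $rr^{*} = q$.

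The only genuine point of vigilance, and therefore the part I would expect to require some care rather than any real obstacle, is the coloring bookkeeping: checking at each step that the partitions produced from compositions, involutions, and through-block decompositions actually lie in $\CC^{\circ,\bullet}(k,l)$ or $\CC^{\circ,\bullet}(k,k)$ with the expected colorings. The colored through-block decomposition (defined just before this theorem, via colored building partitions with white lower rows and colored through-partitions with all white points) ensures that $p_{u}$ and $p_{u}^{*}$ have well-defined colorings compatible with all the compositions appearing above, so no new combinatorial phenomenon arises beyond what is already present in the uncolored case.
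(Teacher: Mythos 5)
Your proposal is correct and follows essentially the same route as the paper: the paper itself disposes of this statement by observing that the proof of Theorem \ref{ThmUnitaryEquivalence} carries over verbatim once the colorings of the rows are fixed, which is exactly the transport you carry out (forward direction via $V = P_{q}T_{r}P_{p}$ and the colored Lemmas \ref{LemGamma} and \ref{LemGeneralt}, converse via the expansion $V=\sum_i\lambda_i T_{r_i}$ and the colored Lemmas \ref{LemGeneralt} and \ref{LemComparisonProjective2}). Your explicit attention to the coloring bookkeeping is more detailed than the paper's one-line justification but introduces no new idea.
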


For the fusion rules, we again only have to deal with partitions dominated by $p\otimes q$, hence the coloring is fixed. Thus, a straightforward generalization of Proposition \ref{PropDominatedTensor} holds. We want to describe the set $X_{\CC^{\circ, \bullet}}(p, q)$ of projective colored partitions such that 
\begin{equation*}
u_{p}\otimes u_{q} = \sum_{m\in X_{\CC^{\circ, \bullet}}(p, q)}u_{m}.
\end{equation*}
Let $Y^{\circ, \bullet}(p, q)$ be the set of all colored partitions of the form $p\ast_{h} q$ for some $(t(p), t(q))$-mixing partition $h$ (with all points colored in white).

\begin{thm}\label{ThmFusionUnitary}
Let $\CC^{\circ, \bullet}$ be a category of colored partitions, let $N$ be an integer and let $\G$ be the associated unitary easy quantum group. Then, for any projective colored partitions $p$ and $q$, we have
\begin{equation*}
u_{p}\otimes u_{q} = \sum_{m\in X_{\CC^{\circ, \bullet}}(p, q)}u_{m}.
\end{equation*}
with
\begin{equation*}
X_{\CC^{\circ, \bullet}}(p, q) = Y^{\circ, \bullet}(p, q) \cap \CC^{\circ, \bullet}.
\end{equation*}
\end{thm}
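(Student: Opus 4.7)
The plan is to follow the proof of Theorem \ref{ThmFusionRules} step by step, observing that once the projective partitions $p$ and $q$ are fixed, the colorings of all partitions that can appear in the analysis are determined, so the argument reduces to a "fixed-coloring" version of the orthogonal case.

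First I would observe that the projection onto the subrepresentation $u_p \otimes u_q$ of $u^{w}\otimes u^{w'}$ (where $w,w'$ are the upper colorings of $p$ and $q$) is precisely $P_p \otimes P_q$, and expand this as
\begin{equation*}
P_p \otimes P_q = T_p \otimes T_q - \bigl(T_p \otimes R_q + R_p \otimes T_q - R_p \otimes R_q\bigr).
\end{equation*}
The second summand commutes in the obvious way, so exactly as in Lemma \ref{LemFusion}, it is the supremum of the projections $P_m$ ranging over projective partitions $m \in \Projc(ww')$ such that $m \preceq l \otimes q$ for some $l \prec p$ with $l \in \CC^{\circ,\bullet}$, or $m \preceq p \otimes r$ for some $r \prec q$ with $r \in \CC^{\circ,\bullet}$. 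Consequently $P_p \otimes P_q$ is the supremum of $P_m$ for $m \in X_{\CC^{\circ,\bullet}}(p,q)$, which gives the sum decomposition in exactly the sense stated in the theorem.

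Next I would address the equality $X_{\CC^{\circ,\bullet}}(p,q) = Y^{\circ,\bullet}(p,q) \cap \CC^{\circ,\bullet}$. The inclusion $X_{\CC^{\circ,\bullet}}(p,q) \subseteq Y^{\circ,\bullet}(p,q) \cap \CC^{\circ,\bullet}$ is obtained by repeating the argument of Proposition \ref{PropDominatedTensor}: any $m \in X_{\CC^{\circ,\bullet}}(p,q)$ is projective and dominated by $p\otimes q$, so it admits a decomposition $m = (l_u^* \otimes r_u^*)h(l_u \otimes r_u)$ with $l_u \in \Pbp$, $r_u \in \Pbp$ building partitions (now with the induced coloring from $m$, so in particular with white lower points) and $h$ projective. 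The combinatorial arguments showing that $h$ is a mixing partition and that $l = l_u^*l_u \preceq p$, $r = r_u^* r_u \preceq q$ use only the through-block/block structure, not the colors, so they apply verbatim; the minimality condition on $m$ then forces $l = p$ and $r = q$, hence $m = p \ast_h q \in Y^{\circ,\bullet}(p,q)$. For the reverse inclusion, I would apply the colored analog of Lemma \ref{LemTensorSubproj}: given $m = p\ast_h q \in \CC^{\circ,\bullet}$, the computation showing $m \preceq p \otimes q$ and the uniqueness of the decomposition prevents $m$ from being dominated by any strict tensor subfactor $l \otimes q$ or $p \otimes r$ inside $\CC^{\circ,\bullet}$, so $m \in X_{\CC^{\circ,\bullet}}(p,q)$.

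The main thing to be careful about, and the only place where "coloring" intervenes non-trivially, is that the mixing partitions $h$ involved in $p \ast_h q$ must be compatible with the colorings of $p_u$ and $q_u$ after rotation; but by definition these building partitions have only white lower points, so the mixing partitions appearing live in a single-color world and the combinatorial descriptions of Definition \ref{DeMixPart} and Lemma \ref{LemTensorSubproj} transfer directly. I do not expect any genuine obstacle here: the proof is essentially a verification that each ingredient (the supremum formula for $R_p$, the dominated-partition analysis, the mixing decomposition, and the uniqueness statement of Lemma \ref{LemTensorSubproj}) is either coloring-insensitive or concerns only partitions with a fixed coloring forced by $p$ and $q$.
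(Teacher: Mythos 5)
Your proposal is correct and follows essentially the same route as the paper: reduce everything to the fixed-coloring setting and check that the colored analogues of Lemma \ref{LemFusion}, Proposition \ref{PropDominatedTensor} and Lemma \ref{LemTensorSubproj} go through. The paper's own proof is even terser, isolating the colored version of Lemma \ref{LemTensorSubproj}(1) as the single new ingredient and justifying it by noting that the symmetry of $p\otimes q$ lets the rotation in that proof be performed with (rotated) unicolored identity partitions, which is the same color-compatibility point you flag at the end.
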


\begin{proof}
As before, the inclusion $(Y^{\circ, \bullet}(p, q)) \cap \CC^{\circ, \bullet} \subset X_{\CC^{\circ, \bullet}}(p, q)$ is clear. To prove the converse one, simply note that by symmetry of $p\otimes q$, one can use unicolored identity partitions in the proof of Lemma \ref{LemTensorSubproj}. From this, we see that if $l\ast_{h}r\in \CC^{\circ, \bullet}$, then $l, r\in \CC^{\circ, \bullet}$, which is the only ingredient we need to finish the proof.
\end{proof}

\subsection{An example}

As already mentioned in the beginning of this section, the study and classification of unitary easy quantum groups is at its earliest stage. We will therefore only illustrate our results with the most simple example: the free unitary quantum group.

Let $\CC^{\circ, \bullet}$ be a category of colored \emph{noncrossing} partitions. Note that for a fixed word $w\in A$, the linear maps $T_{p}$ such that the upper and lower coloring of $p$ are given by $w$ are linearly independent as soon as $N\geqslant 4$ by Lemma \ref{lem:linearindependence}. This means that our techniques completely describe the representation theory of such quantum groups. This is interesting because there are infinitely many non-isomorphic free unitary easy quantum groups (for a fixed $N$), in sharp contrast with the orthogonal case (see \cite{tarrago2015unitary}). For the remainder of this section, we assume that $N\geqslant 4$.

The simplest example is the quantum group $U_{N}^{+}$. The associated category of partitions is $\U^{\circ, \bullet} = \langle \emptyset\rangle$ (i.e. it is generated by the white identity partition). In other words, it consists of pair partitions where the through-blocks have the same color on each end and the non-through-blocks have different colors on each end.

Let us first describe $\Irr(U_{N}^{+})$. If $p$ is a projective colored partition in $\mathcal{U}^{\circ, \bullet}$, we can remove any non-through-block in $p$ without changing the unitary equivalence class. Removing all the non-through-blocks yields a colored partition to which we can naturally associate an element in the monoid $\N\ast \N$: Reading from left to right, if we have $k_{1}
$ white points, then $k_{2}$ black points, then $k_{3}$ white points and so on, we get the element
\begin{equation*}
w(p) = k_{1}\ast k_{2}\ast k_{3}\ast\dots \in \N\ast \N.
\end{equation*}
Reciprocally, any such element gives rise to a unique projective colored partition with no non-through-blocks. Let us consider two elements $w$ and $w'$ in $\N\ast \N$ and the associated projective colored partitions $p$ and $p'$. If the representations $u_{p}$ and $u_{p'}$ are unitarily equivalent, the partition obtained by combining the upper part of $p$ and the lower part of $p'$ must be in $\U^{\circ, \bullet}$. Since the only through-blocks in $\U^{\circ, \bullet}$ are the white identity and the black identity, we infer the equality $w = w'$. We thus have a bijection
\begin{equation*}
\Irr(U_{N}^{+}) \simeq \N\ast \N.
\end{equation*}

For the fusion rules, consider two elements $w$ and $w'$ in $\N\ast \N$ and the associated colored partitions $p$ and $p'$. The colored partition $p\boxvert^{k} p'$ can never belong to $\U^{\circ, \bullet}$ since it contains a block of size four. As for the partition $p\square^{k} p'$, it lies in $\U^{\circ, \bullet}$ if and only if each of its non-through-block has endpoints of different colors. This means that the $i$-th upper point of $p$, starting from the right, has color opposite to that of the $i$-th upper point of $p'$, starting from the left for any $1\leqslant i\leqslant k$.

\begin{de}
We denote by $w\mapsto \overline{w}$ the unique antimultiplicative map on $\N\ast\N$ exchanging the two generators.
\end{de}

Then, $p\square^{k} p'$ is in $\U^{\circ, \bullet}$ if and only if there is a decomposition $p = az$ and $p' = \overline{z}b$ and the resulting partition is obviously equivalent to $ab$. Thus, we recover the main result of \cite{banica1997groupe}:
\begin{equation*}
u_{w}\otimes u_{w'} = \sum_{w = az, w' = \overline{z}b} u_{ab}.
\end{equation*}

\begin{rem}
One could use the same ideas to study the \emph{quantum reflexion groups} $H_{N}^{s+}$ of \cite{banica2009fusion} and get an alternative proof of the fusion rules. Let us also mention the free complexifications of free orthogonal easy quantum groups studied by S. Raum in \cite{raum2012isomorphisms}.
\end{rem}

\subsection{The freeness conjecture}\label{SubsecConjecture}

Beyond these examples, our techniques can help making the freeness conjecture of \cite[Sec 10]{banica2009fusion} more precise. Let us introduce some notions to state this conjecture. Let $S$ be a set together with an involution $x\mapsto \overline{x}$ and a binary law
\begin{equation*}
\begin{array}{ccc}
S\times S & \rightarrow & S\cup\emptyset \\
(x, y) & \mapsto & x\ast y
\end{array}
\end{equation*}
called the \emph{fusion} operation. If $R(S)$ is the free monoid on $S$, we can extend the previous operations in the following way: If $w = w_{1}\dots w_{n}$ and $w' = w'_{1}\dots w'_{n'}$ are words in $R(S)$, then
\begin{equation*}
\begin{array}{ccc}
\overline{w} & = & \overline{w}_{n}\dots \overline{w}_{1} \\
w\ast w' & = & w_{1}\dots (w_{n}\ast w'_{1})\dots w'_{n'}
\end{array}
\end{equation*}
By convention, $w\ast w' = \emptyset$ if $w_{n}\ast w'_{1} = \emptyset$ or if one of the two words is empty. This structure enables us to define a tensor product on $R(S)$ in the following way:
\begin{equation*}
w\otimes w' = \sum_{w = az, w' = \overline{z}b} ab + a\ast b.
\end{equation*}
The tensor product and the involution turn $(R^{+}(S), -, \otimes)$ into a \emph{free fusion semiring}. In our terminology, the conjecture can be stated as follows:

\begin{conj}[Banica--Vergnioux]
Let $\G$ be a free unitary easy quantum group. Then, there is a set $S$ together with an involution and a binary law such that the fusion semiring $(R^{+}(\G), -, \otimes)$ of $\G$ is isomorphic to the free fusion semiring $(R^{+}(S), -, \otimes)$.
\end{conj}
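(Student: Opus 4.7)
The plan is to use the framework of Section \ref{SecUnitary} to identify $\Irr(\G)$ with equivalence classes of projective colored partitions, and then exhibit a generating set $S$ of ``prime'' projective partitions realising the free fusion semiring structure. Let $\CC^{\circ, \bullet}$ be the noncrossing category of colored partitions associated to $\G$. The colored analog of Theorem \ref{ThmFusionRulesNC} yields a bijection between $\Irr(\G)$ and equivalence classes of projective partitions in $\CC^{\circ, \bullet}$.

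First, I would declare a projective partition $p \in \CC^{\circ, \bullet}$ to be \emph{prime} if it is not equivalent to the empty partition and admits no non-trivial factorisation $p \sim p_1 \otimes p_2$ with $p_1, p_2 \in \CC^{\circ, \bullet}$ both projective and both inequivalent to the empty partition. Let $S$ be the set of equivalence classes of prime projective partitions. The involution $x \mapsto \overline{x}$ on $S$ is inherited from the colour-reversal-and-reflection operation which passes from $u_p$ to its contragredient $\overline{u_p}$, ensuring that the extension of $x \mapsto \overline{x}$ to words in $R(S)$ is antimultiplicative.

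Next, I would prove that every projective partition $p \in \CC^{\circ, \bullet}$ admits an essentially unique factorisation $p \sim q_1 \otimes \cdots \otimes q_n$ with each $q_i$ prime. Existence is an immediate induction on $t(p)$. For uniqueness, I would exploit the noncrossing hypothesis: the through-blocks of any representative carry a canonical left-to-right order, and any tensor factorisation of $p$ must arise from a cut of this ordered sequence at a position that is spanned by no non-through-block. One then has to show that the finest such cut pattern is canonical, i.e.\ independent of the representative, giving a bijection $p \mapsto w(p) = q_1 \cdots q_n$ between $\Irr(\G)$ and $R(S)$. For primes $x, y \in S$, define $x \ast y$ to be the equivalence class of $x \boxvert^{1} y$ whenever this partition lies in $\CC^{\circ, \bullet}$ (and, by the uniqueness argument, reduces to either a prime or a product of primes capped by a single fusion), and $x \ast y = \emptyset$ otherwise.

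Finally, I would verify that the fusion rules match. By Theorem \ref{ThmFusionUnitary} combined with the colored analog of Lemma \ref{LemDescriptionOfX}, $u_p \otimes u_q$ decomposes along the noncrossing mixing partitions $\tilde{h}_{\square}^{k}$ and $\tilde{h}_{\boxvert}^{k}$ into terms $u_{p \square^{k} q}$ and $u_{p \boxvert^{k} q}$ whenever the underlying partitions lie in $\CC^{\circ, \bullet}$. Under the bijection $\Irr(\G) \leftrightarrow R(S)$, the operation $p \square^{k} q$ precisely implements the concatenation $ab$ with $w(p) = az$, $w(q) = \overline{z}b$, $|z| = k$, while $p \boxvert^{k+1} q$ implements the fusion $a \ast b$ with $|z| = k$. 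This exactly matches $w \otimes w' = \sum_{w = az,\, w' = \overline{z}b}(ab + a \ast b)$. The main obstacle is the unique factorisation step: non-through-blocks may link several consecutive through-blocks, obstructing naive cuts, and the very notion of ``prime'' depends on which witnessing partitions $r$ are available inside $\CC^{\circ, \bullet}$. In practice, a proof may need to invoke the forthcoming classification of noncrossing categories of colored partitions from \cite{tarrago2013unitary} to exhibit the primes and verify well-definedness of $\ast$ on a case-by-case basis; a uniform classification-free argument would require a substantially more structural understanding of the interplay between $\sim$ and tensor decomposition inside $\CC^{\circ, \bullet}$.
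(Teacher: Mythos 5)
This statement is a \emph{conjecture}: the paper offers no proof of it, and in fact immediately follows the statement with a remark that \emph{in this form the conjecture is false}, the counterexample being the symmetrized quantum permutation group $S_{N}^{+}\times \Z_{2}$. The obstruction is the possible existence of a non-trivial one-dimensional representation: such a representation $u$ satisfies $u\otimes \overline{u} = 1$, whereas in a free fusion semiring every element $a\neq 1$ satisfies $a\overline{a}\neq 1$. In the combinatorial picture this corresponds to a projective colored partition $p$ with $t(p)=0$ that is not equivalent to the empty partition; then $X_{\CC^{\circ,\bullet}}(p,\overline{p})$ reduces to the single partition $p\square^{0}\overline{p}$, whose associated representation is one-dimensional and hence trivial. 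So no argument along your lines can succeed without an additional hypothesis excluding such partitions, which is exactly how the paper refines the conjecture.

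Concretely, your proposal breaks at the step where you claim a bijection $p\mapsto w(p)$ between $\Irr(\G)$ and $R(S)$ compatible with the fusion rules. The reduction implicit in that step --- stripping non-through-blocks, so that a projective partition is determined up to equivalence by its sequence of through-blocks --- is only valid when $\G$ has no non-trivial one-dimensional representation (the paper states this explicitly in Subsection \ref{SubsecConjecture}). If a prime $x\in S$ has $t=0$, the letter $x$ would have to satisfy $x\otimes\overline{x}=1$ under your dictionary, contradicting the free fusion rule $w\otimes w'=\sum ab + a\ast b$. You correctly flag unique factorisation as delicate, but the deeper problem is that the target statement is false as stated. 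For comparison, the paper's refined conjecture takes $S=S(\CC^{\circ,\bullet})$ to be the equivalence classes of \emph{single-block} projective partitions (rather than your tensor-indecomposables), assumes no non-trivial one-dimensional representations, and even then only establishes injectivity of the labelling map $\Phi$ under an extra ``stability under passing to blocks'' hypothesis; it explicitly leaves open the verification that $\Phi$ preserves tensor products. Your outline of how $\square^{k}$ and $\boxvert^{k}$ should implement concatenation and fusion is consistent with the paper's expectations for the corrected conjecture, but it is a programme, not a proof, and the uncorrected statement you set out to prove admits a counterexample.
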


\begin{rem}
In that form, the conjecture is false (consider for example the symmetrized quantum permutation group $S_{N}^{+}\times \Z_{2}$). One should add extra assumptions in the conjecture, but we do not know which ones. We will come back to this problem in the end.
\end{rem}

That conjecture has up to now only been proven by explicitely computing the fusion rules of certain free easy quantum groups. As already mentioned, our computations directly give the set $S$ with its full structure (for $U_{N}^{+}$, $O_{N}^{+}$, $S_{N}^{+}$ and $H_{N}^{+}$). Thus, there might be a unified proof of this conjecture involving only general considerations on projective colored partitions.

Let $\CC^{\circ, \bullet}$ be a category of noncrossing colored partitions and let $S(\CC^{\circ, \bullet})$ be the set of equivalence classes of projective partitions having only one block. The equivalence class of $p$ will be denoted $[p]$. Let $\overline{p}$ be the partition obtained by changing the colors of all points in $p$ and set
\begin{equation*}
[p]\ast[q] = [p\boxvert q].
\end{equation*}

\begin{rem}
The representation $u_{\overline{p}}$ is the contragredient of the representation $u_{p}$. In fact, by Theorem \ref{ThmFusionUnitary} we know that $u_{p}\otimes u_{\overline{p}}$ contains a one-dimensional representation $u_{q}$ with $q = p\square\overline{p}$. The upper part of this partition is a rotated version of $p$ and therefore belongs to $\CC^{\circ, \bullet}$. This means that $q$ is equivalent to the empty partition, i.e. $u_{q}$ is the trivial representation. Since $u_{p}$ and $u_{\overline{p}}$ are irreducible, they are contragredient to each other.
\end{rem}

An obvious obstruction to the validity of the freeness conjecture is the existence of a non-trivial one-dimensional representation. In fact, such a representation $u$ satisfies $u\otimes \overline{u} = 1$, whereas any element $a\neq 1$ in a free fusion semiring satisfies $aa^{*} \neq 1$ (this was already noticed in \cite[Rem 4.4]{raum2012isomorphisms}). With this in hand, we can state a more precise version of the freeness conjecture:

\begin{conj}
Let $\G$ be a free unitary easy quantum group without any non-trivial one-dimensional representation. Then, there is an isomorphism of fusion semirings
\begin{equation*}
(R^{+}(\G), -, \otimes) \simeq (R^{+}(S(\CC^{\circ, \bullet})), -, \otimes).
\end{equation*}
\end{conj}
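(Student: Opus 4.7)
The plan is to construct an explicit semiring homomorphism $\Phi \colon R^{+}(S(\CC^{\circ,\bullet})) \to R^{+}(\G)$ sending a word $[p_{1}] \cdots [p_{n}]$ to the class of $u_{p_{1} \otimes \cdots \otimes p_{n}}$, and then to check bijectivity. The first ingredient is a decomposition lemma: using the fact that a noncrossing projective partition $p = p_{u}^{*} p_{u}$ has its building partition $p_{u}$ cut naturally into single-block pieces $p_{1,u}, \dots, p_{n,u}$ (one for each of the $n = t(p)$ through-blocks, with "external" non-through-blocks assigned consistently to adjacent slices), one obtains a canonical decomposition $p = p_{1} \otimes \dots \otimes p_{n}$ where each $p_{i} = p_{i,u}^{*} p_{i,u}$ lies in $\CC^{\circ,\bullet}$ and has a single through-block. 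Combined with the colored analogue of Theorem \ref{ThmFusionRulesNC}, this ensures that every irreducible representation of $\G$ is in the image of $\Phi$, so surjectivity would be immediate.

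The second step is to verify that $\Phi$ respects the tensor product. By the colored version of Theorem \ref{ThmFusionUnitary} and the description of $X_{\CC^{\circ,\bullet}}(p,q)$ via mixing partitions analogous to Lemma \ref{LemDescriptionOfX}, the decomposition of $u_{p} \otimes u_{q}$ is parametrized by partitions $p \square^{a} q$ and $p \boxvert^{a} q$ belonging to $\CC^{\circ,\bullet}$. On atomic decompositions $p = p_{1} \otimes \dots \otimes p_{n}$ and $q = q_{1} \otimes \dots \otimes q_{m}$, the operation $p \square^{a} q$ turns the last $a$ through-blocks of $p$ and the first $a$ of $q$ into non-through-blocks, and lies in $\CC^{\circ,\bullet}$ precisely when $q_{i} \sim \overline{p_{n-a+i}}$ for $1 \leq i \leq a$; this matches exactly the condition $w' = \overline{z}b$ with $|z| = a$ in the free fusion formula $w \otimes w' = \sum_{w = az,\, w' = \overline{z}b}(ab + a \ast b)$. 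Similarly, $p \boxvert^{a} q$ matches the summand $a \ast b$ with $|z| = a - 1$, using that $[p] \ast [q] = [p \boxvert q]$ by definition of the fusion on $S(\CC^{\circ,\bullet})$. These identifications would show that $\Phi$ is a semiring homomorphism.

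The main obstacle is injectivity. Concretely, one must show that inequivalent reduced words in $R^{+}(S(\CC^{\circ,\bullet}))$ yield inequivalent irreducibles of $\G$; equivalently, if $u_{p_{1} \otimes \dots \otimes p_{n}}$ and $u_{q_{1} \otimes \dots \otimes q_{m}}$ are unitarily equivalent, one must conclude $n = m$ and $p_{i} \sim q_{i}$ in $\CC^{\circ,\bullet}$ for every $i$. This is precisely where the hypothesis that $\G$ has no non-trivial one-dimensional representation enters: such a representation would correspond to a projective partition $c \in \Projc$ with $t(c) = 0$ that is not equivalent to the empty partition, and would create "hidden" atoms which could be inserted into or removed from words without changing the associated representation, producing non-free relations incompatible with $R^{+}(S(\CC^{\circ,\bullet}))$. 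Under the hypothesis, I would try to prove a factorization property: any intertwiner $r \in \CC^{\circ,\bullet}$ realizing the equivalence $p_{1} \otimes \dots \otimes p_{n} \sim q_{1} \otimes \dots \otimes q_{m}$ (provided by the colored analogue of Proposition \ref{PropUnitaryEquivalence}) itself decomposes as $r = r_{1} \otimes \dots \otimes r_{n}$, with each $r_{i}$ implementing $p_{i} \sim q_{i}$. A plausible attack proceeds by induction on $n$, using the absence of degenerate one-dimensional summands to rule out "diagonal" intertwiners that mix adjacent atomic pieces. Establishing this factorization in full generality is the open heart of the freeness conjecture, and is where one should expect the real work to lie.
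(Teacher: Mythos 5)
The statement you are trying to prove is labelled \emph{Conjecture} in the paper, and the paper contains no proof of it: the authors only sketch a programme, namely the surjection $\Phi\colon R^{+}(S(\CC^{\circ,\bullet}))\to R^{+}(\G)$ obtained by deleting non-through-blocks, the observation that a non-trivial one-dimensional representation obstructs freeness because it satisfies $u\otimes\overline{u}=1$, and a lemma proving injectivity of the labelling \emph{only under the additional hypothesis} that every block of a partition in $\CC^{\circ,\bullet}$ again belongs to $\CC^{\circ,\bullet}$. Your proposal follows essentially this same programme, and to your credit you correctly identify where the real difficulties sit. But as a proof it has two genuine gaps, which are exactly the reasons the statement remains a conjecture. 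First, injectivity: your proposed factorization of the intertwiner $r$ realizing $p_{1}\otimes\cdots\otimes p_{n}\sim q_{1}\otimes\cdots\otimes q_{m}$ into $r=r_{1}\otimes\cdots\otimes r_{n}$ is precisely the paper's ``passing to blocks'' argument, and it is only known to work when blocks of partitions in $\CC^{\circ,\bullet}$ stay in $\CC^{\circ,\bullet}$; it is not established (only hoped) that this follows from the absence of non-trivial one-dimensional representations, and your inductive ``rule out diagonal intertwiners'' step is not carried out. Second, the tensor-product compatibility: your claim that the combinatorial matching of $p\square^{a}q$ and $p\boxvert^{a}q$ with the summands $ab$ and $a\ast b$ ``would show that $\Phi$ is a semiring homomorphism'' is too quick. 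The paper explicitly states that only the inclusion $\Phi(w\otimes w')\subset\Phi(w)\otimes\Phi(w')$ is clear, while the converse inclusion ``seems a difficult task'': one must show that \emph{every} partition in $X_{\CC^{\circ,\bullet}}(p,q)$ arises from a decomposition $w=az$, $w'=\overline{z}b$ with all the relevant atomic equivalences holding in $\CC^{\circ,\bullet}$, and your criterion ``$p\square^{a}q\in\CC^{\circ,\bullet}$ precisely when $q_{i}\sim\overline{p_{n-a+i}}$'' is asserted without justification and is itself part of what needs proving. In short, your sketch is a faithful account of the intended strategy, but neither you nor the paper closes either gap, so no proof of the statement exists to compare against.
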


This form has several advantages:
\begin{itemize}
\item It allows to test directly the conjecture by computing the representation theory.
\item It may produce natural restrictions on the quantum group $\G$ leading to the right characterization of free quantum groups satisfying the easiness conjecture.
\end{itemize}

The general strategy goes through the study of the natural map
\begin{equation*}
\Phi: R^{+}(S(\CC^{\circ, \bullet})) \rightarrow R^{+}(\G)
\end{equation*}
and proving that, under suitable assumptions, it is a bijection preserving the tensor product.
The tools to prove such statements are typically those used to classify categories of partitions. Hence, progress in the classification of colored partitions will probably give further evidence for the freeness conjecture. To conclude, let us mention a companion conjecture which shows the interest of the free structure from the point of view of operator algebras.

\begin{conj}[Banica--Vergnioux]
Let $\G$ be a compact quantum group satisfying the freeness conjecture. Then, the reduced C*-algebra of $\G$ is simple with unique trace.
\end{conj}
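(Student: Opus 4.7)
The plan is to imitate the strategy developed by Vaes and Vergnioux for $O_N^+$ and $U_N^+$ in their paper on the boundary of universal discrete quantum groups, and by Vergnioux alone in his work on amenability and the Akemann--Ostrand property. The freeness hypothesis on the fusion semiring is precisely the combinatorial input that makes that strategy go through: it endows the dual discrete quantum group $\widehat{\G}$ with a tree-like word-length structure, from which one can bootstrap simplicity and the uniqueness of the trace using Powers-type averaging arguments.

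Concretely, the first step is to use the identification $R^+(\G)\simeq R^+(S)$ to define a length function $\ell$ on $\mathrm{Irr}(\G)$ by declaring that the generators corresponding to $S\subset R^+(S)$ have length one and extending by tensoring/concatenation. The free fusion description immediately yields a Haagerup-type inequality: for any element $a$ in the $*$-algebra of characters supported on representations of length at most $n$, one controls $\|a\|$ by a polynomial in $n$ times $\|a\|_2$. This is precisely analogous to Vergnioux's property RD for $O_N^+$ and $U_N^+$, and the free structure of the fusion rules implies that tensor products of generators split into a ``diagonal plus fusion plus concatenation'' sum, which is the combinatorial hypothesis underlying the proof of RD. Next, one exploits the freeness of the semiring to construct, for each irreducible $u_w$ with $|w|=n$ large, a random-walk--style averaging: the measure $\mu_n$ that uniformly spreads the character $\chi_{u_w}$ over the generating set decreases the $L^2$-norm of any element orthogonal to the Haar state, by a factor that tends to $0$ as $n\to\infty$.

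With RD and the asymptotic decay in hand, the Dixmier averaging technique of Powers (adapted to the quantum setting, as in the work of Banica and later of Vaes--Vergnioux) shows that for any self-adjoint $x$ in the reduced C*-algebra $C_r(\G)$, the norm-closed convex hull of $\{u x u^*\mid u \text{ unitary coefficient}\}$ meets the scalars, proving that the Haar state is the unique trace and, combined with faithfulness of the Haar state on $C_r(\G)$, that any nonzero two-sided closed ideal contains the unit. For this last step one needs factoriality of the associated von Neumann algebra $L^\infty(\G)$, which can be deduced either from property (AO) \emph{à la} Akemann--Ostrand (again available from the tree-like fusion) or from a direct analysis of the centre using the decay estimates.

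The main obstacle is the passage from the combinatorial freeness of $R^+(\G)$ to an actual tree-like metric estimate on convolution operators on $\ell^2(\widehat{\G})$. In the classical cases treated in the literature this is done by hand using the explicit fusion rules, and the uniform combinatorial framework developed in this paper should make such estimates possible in a single blow, but one still has to bound the multiplicities and dimensions of the irreducibles $u_p(\alpha)$ of Theorem~\ref{PropGeneralDecompositionRefined} appearing in a given tensor product; concretely, one must show that the number of summands grows at most polynomially in the length $|w|$ while their dimensions grow at least exponentially, which are the two ingredients feeding RD. A secondary obstacle is the exclusion of non-trivial one-dimensional representations: as already noted before Subsection~\ref{SubsecConjecture}, this is necessary, and the classification work alluded to in \cite{tarrago2013unitary} would be needed to verify that the remaining free unitary easy quantum groups genuinely have trivial one-dimensional part.
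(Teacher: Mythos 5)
This statement is labelled a \emph{conjecture} in the paper: the authors offer no proof of it (they merely record it, attributed to Banica--Vergnioux, as a companion to the freeness conjecture), so there is no argument of theirs to compare yours against. More importantly, what you have written is not a proof either --- it is a research programme whose two load-bearing steps are exactly the ones you flag as ``obstacles'', and both are genuinely open in this generality. First, the Haagerup/RD inequality and the decay estimates you invoke require not just the combinatorial freeness of $R^{+}(\G)$ but quantitative input: polynomial bounds on the number of irreducible summands in a tensor product \emph{and} exponential lower bounds on the dimensions of the irreducibles. The latter cannot follow from the fusion semiring alone, since the fusion semiring is independent of $N$ while the dimensions are not; for small $N$ the quantum group can be coamenable (the standard example being $O_{2}^{+}\cong SU_{-1}(2)$, whose fusion semiring is free on one self-conjugate generator), in which case the counit is bounded on the reduced C*-algebra and its kernel is a proper closed two-sided ideal, so simplicity fails outright. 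Any correct proof must therefore use analytic data beyond the hypothesis of the conjecture as stated, which means your reduction ``freeness of $R^{+}(\G)\Rightarrow$ tree-like metric $\Rightarrow$ RD $\Rightarrow$ Powers averaging'' has a hole at the very first arrow.

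Second, even granting RD and dimension growth, the Powers-type averaging step is not a formality: in the known cases ($O_{N}^{+}$ for $N\geq 3$, $U_{N}^{+}$, etc.) it is carried out by hand with explicit estimates on convolution operators built from coefficients of specific irreducible representations, and the passage from the combinatorics of $X_{\CC^{\circ,\bullet}}(p,q)$ to operator-norm inequalities on $\ell^{2}(\widehat{\G})$ is precisely the analytic content that nobody has yet produced uniformly over all categories of noncrossing colored partitions. Citing the analogy with Vaes--Vergnioux and Vergnioux does not discharge this. In short: you have correctly identified the expected route to a proof, but every step that would turn the route into a theorem is missing, and the first step is provably not a consequence of the stated hypothesis alone.
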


\bibliographystyle{amsplain}
\bibliography{../../quantum}

\end{document}